\documentclass{article}
\usepackage{microtype}
\usepackage{comment}
\usepackage{amsmath}
\usepackage{amssymb}
\usepackage{overpic}
\usepackage{tikz}
\usetikzlibrary{arrows.meta,calc}

\usepackage{tabularx,booktabs} 
\usepackage{multirow} 
\usepackage{stmaryrd}
\usepackage{algorithm}
\usepackage{algpseudocode}

\usepackage[dvipsnames]{xcolor}
\usepackage{mathtools}

\usepackage[a4paper]{geometry}
\usepackage[shortlabels]{enumitem}

\usepackage{amsthm}
\usepackage{thmtools}
\usepackage{hyperref}
\usepackage{calrsfs}
\usepackage{hyperref} 
\hypersetup{
	colorlinks = true,
	linkcolor = {cyan},
	citecolor = {cyan},
} 

\usepackage{thm-restate}
\declaretheorem[numberwithin=section]{theorem}
\declaretheorem[numberlike=theorem]{proposition}
\declaretheorem[numberlike=theorem]{corollary}
\declaretheorem[numberlike=theorem]{lemma}

\declaretheorem[numberlike=theorem,style=definition]{example}
\declaretheorem[numberlike=theorem,style=definition]{examples}

\usepackage[indLines=true]{algpseudocodex}

\usepackage{minted}

\NewDocumentCommand{\NN}{}{\mathbb{N}}
\NewDocumentCommand{\ZZ}{}{\mathbb{Z}}

\newcommand{\SL}{\mathrm{SL}}
\newcommand{\Mod}{\mathrm{Mod}}

\makeatletter

\newcommand{\address}[1]{%
	\g@addto@macro\@addresses{\noindent#1\par}%
}
\newcommand{\email}[1]{%
	\g@addto@macro\@addresses{\noindent\texttt{#1}\par\medskip}%
}
\newcommand{\@addresses}{}
\newcommand{\printaddresses}{%
	\section*{}
	\@addresses
}
\makeatother

\title{Exact curve counting of given word length \\ on the once-punctured torus}

\author{Filippo Baroni \and David Fisac \and Mingkun Liu}

\address{\textbf{Filippo Baroni}\\
	Trinity College Dublin, Dublin, Ireland}
\email{baronif@tcd.ie}

\address{\textbf{David Fisac}\\
	Université Paris-Est Créteil, CNRS, LAMA UMR8050, F-94010 Créteil, France}
\email{david.fisac-camara@cnrs.fr}

\address{\textbf{Mingkun Liu}\\
	LAGA, Université Sorbonne Paris Nord, Villetaneuse, France}
\email{mingkun.liu@math.univ-paris13.fr}

\newcommand\blfootnote[1]{%
	\begingroup
	\renewcommand\thefootnote{}\footnote{#1}%
	\addtocounter{footnote}{-1}%
	\endgroup
}

\date{}

\begin{document}
	\maketitle
	\blfootnote{2020 \textit{Mathematics Subject Classification}. Primary: 57K20. Secondary: 05A05, 68R15.}
	
	\begin{abstract}
		On the once-punctured torus, we give an exact formula for the number of curves in any given mapping class group orbit of given word length. This settles a conjecture of Chas in \cite{Chas}. 
	\end{abstract}
	
	\section{Introduction}
	
	Let $T$ be a once-punctured (topological) torus.
	We call a \emph{curve} a free homotopy class of an unoriented closed curve on $T$,
	and denote the set of curves by $\mathcal{C}(T)$.
	The fundamental group $\pi_1(T)$ is a free group of rank $2$.
	Let $\{ \mathtt{a}, \mathtt{b} \}$ be a generating set of $\pi_1(T)$,
	and let $\mathtt{A} \coloneqq \mathtt{a}^{-1}$ and $\mathtt{B} \coloneqq \mathtt{b}^{-1}$.
	Every element in $\pi_1(T)$ can be written as a word in $\{ \mathtt{a}, \mathtt{b}, \mathtt{A}, \mathtt{B} \}$.
	Conjugacy classes up to inverses of $\pi_1(T)$ are in bijection with $\mathcal{C}(T)$. We call a curve \emph{essential} if it is not null-homotopic or a power of a loop around the puncture. For a word $w$ in $\{\mathtt{a},\mathtt{b},\mathtt{A},\mathtt{B}\}$, we denote by $[w]$ the corresponding element in the free group $\pi_1(T)$ and by $\llbracket w\rrbracket$ the corresponding curve in $\mathcal{C}(T)$.  This equips the set of curves $\mathcal{C}(T)$ with a combinatorial length, the \emph{word length}---i.e.\ the length of a curve is the number of letters in a cyclically reduced representative. We will denote the word length of a curve $\gamma\in\mathcal{C}(T)$ by $\ell(\gamma)$.\\

	The mapping class group $\Mod(T)$ of $T$ is the group of isotopy classes of orientation-preserving self-homeomorphisms of $T$.	The mapping class group acts on $\mathcal{C}(T)$.
	The \emph{topological type}, or simply \emph{type}, of a curve is its orbit under this action.
	Our main result is the following.

	\begin{theorem}\label{thm:mainthm}
		Let $\gamma \in \mathcal{C}(T)$ be an essential curve.
		There exist $N \in \ZZ_{\geq 1}$, $(P_i, Q_i, K_i)_{i=1}^N \in (\ZZ_{\geq 1}^2 \times \ZZ_{\geq 0})^N$, $m\in\{2,4\}$, and $L_0 \in \ZZ_{\geq 1}$ such that, for all integers $L \geq L_0$, we have
		\[
		\# \{ \alpha \in \Mod(T) \cdot \gamma \mid \ell(\alpha) = L \}
		=
		m\left(\sum_{i=1}^{N} \varphi_{P_i, Q_i}(L - 4 K_i)
		+
		C(L) 
		\right),
		\]
		where
		\[
		\varphi_{P, Q}(L)
		\coloneqq
		\# \{ (x, y) \in \ZZ_{\geq1}^2 \mid Px + Qy = L, \ \gcd(x, y) = 1 \},
		\]
		and $C(L) \in \ZZ$ is an explicit periodic function with period at most $\prod_{i=1}^N\mathrm{lcm}(P_i,Q_i)$ satisfying $|C(L)|\leq 2N$. Moreover, 
			$m=2$ if $\mathrm{Stab}_{\mathrm{Mod}(T)}(\gamma)$ has some non-trivial torsion elements, and $m=4$ otherwise. 
	\end{theorem}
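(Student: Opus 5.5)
We will use the identification $\Mod(T)\cong \SL_2(\ZZ)$, acting on $\pi_1(T)=F(\mathtt a,\mathtt b)$ up to inner automorphisms and inversion. The plan is to turn the orbit $\Mod(T)\cdot\gamma$ into a set of cyclic words, organised by primitive elements and their slopes. Every primitive conjugacy class in $F_2$ is represented by a Christoffel word of slope $p/q$ with $\gcd(p,q)=1$. Its length in $\{\mathtt a,\mathtt b,\mathtt A,\mathtt B\}$ is $|p|+|q|$. Its letters are drawn from only one of the four sign patterns $\{\mathtt a^{\pm},\mathtt b^{\pm}\}$, giving the quadrant.

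\textbf{Step 1: normal form for the orbit.} Write $\gamma$, after applying a mapping class, as a cyclic word in a pair of generators $u,v$ forming a basis, say $\gamma=\llbracket W(u,v)\rrbracket$ with $W$ a fixed cyclically reduced word. Every curve in the orbit is then $\llbracket W(\phi(\mathtt a),\phi(\mathtt b))\rrbracket$ for $\phi\in\mathrm{Aut}^+(F_2)$. Up to the stabilizer, these correspond to pairs of consecutive Farey-neighbour Christoffel words $(u,v)$ with slopes $p/q,\ p'/q'$ and $|pq'-p'q|=1$.

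\textbf{Step 2: length formula.} Substitute Christoffel words for $u,v$ into $W$. The goal is a lemma: for all but finitely many configurations (those with $\min(p,q)$ small, near the axes), the cyclic reduction of $W(u,v)$ has length exactly $Px+Qy-4K$, where $(x,y)$ is a coprime pair parametrising the basis. The coefficients $P,Q$ count the occurrences of the letters of $W$, and $K$ counts cancellations. The cancellations occur only at junctions where $W$ contains a subword such as $uv^{-1}$ or $u^{-1}v$. There, the two Christoffel words share a long common prefix or suffix, and the cancellation has a fixed length of order $2$ letters per junction, giving the shifts $4K_i$.

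Different choices of which basis element plays $u$, together with the cyclic structure of $W$, give finitely many families $(P_i,Q_i,K_i)$. Each family is parametrised by coprime positive $(x,y)$, with $x,y$ coming from the Farey-parent coordinates. For this reason the count within a family is $\varphi_{P_i,Q_i}(L-4K_i)$.

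\textbf{Step 3: symmetries and multiplicity.} The four quadrants (sign choices on $\mathtt a,\mathtt b$) give a factor $4$. When the stabilizer of $\gamma$ contains torsion (the elliptic elements of order $2,3,4,6$, modulo the central $-I$, which acts by inversion), the families pair up. Some quadrants then yield the same curves, so the factor drops to $m=2$; this follows from an orbit-stabilizer computation on bases.

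\textbf{Step 4: corrections.} $C(L)$ accounts for the boundary cases $x$ or $y$ equal to $1$, or slopes $0,\infty,\pm1$, where the generic cancellation count fails or families overlap. Each family contributes at most one or two exceptional solutions of $Px+Qy=L-4K$ with $x$ or $y$ fixed. Such solutions exist exactly when congruences mod $P_i,Q_i$ hold, which gives periodicity with period dividing $\prod\mathrm{lcm}(P_i,Q_i)$ and the bound $|C(L)|\le 2N$.

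The main obstacle is Step 2: proving that the cancellation in $W(u,v)$ is uniformly bounded and depends only on the combinatorial family, not on $(x,y)$. I would attack it using the standard palindromic structure of Christoffel words ($u=\mathtt a\,\pi\,\mathtt b$, with $\pi$ a central palindrome and $uv$ versus $vu$ differing only in their last two letters). This lets every junction be analysed explicitly.
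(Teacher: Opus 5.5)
\textbf{The gap is Step~2.} It is not merely the hard step: as formulated it is false, and nothing in the proposal replaces it. For a fixed $W$, the cancellation in $W(u,v)$ is not bounded. As you note yourself, Farey-neighbour Christoffel words share long prefixes or suffixes, of length comparable to $\min(|u|,|v|)$. So the sign changes in $W$ cancel a linearly growing number of letters, throughout the tree and not only near the axes.

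Take $\gamma=\llbracket\mathtt{aabAB}\rrbracket$ and $W(u,v)=uuvu^{-1}v^{-1}$. One checks that $\llbracket\mathtt{aabAB}\rrbracket=\llbracket\mathtt Z^2\mathtt a\rrbracket$, where $\mathtt Z=\mathtt{BabA}$ is fixed letter for letter by $L$ and $R$. Hence for every $M\in\langle L,R\rangle_+$ the curve $M\gamma=\llbracket\mathtt Z^2u\rrbracket$, with $u=M(\mathtt a)$, has length exactly $|u|+4$. For example, $u=\mathtt{ab}^n$, $v=\mathtt b$ gives $\mathtt{ab}^n\mathtt{abAB}$. Meanwhile $W(u,v)$ has $3|u|+2|v|$ letters, so the cancellation is $2|u|+2|v|-4$. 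The letter counts $(3,2)$ of $W$ have nothing to do with the true count $4\varphi(L-4)$.

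Nor does one triple per tree work up to finitely many exceptions. In Example~\ref{ex:counting}, the single tree $T_\gamma$ contains three infinite stable subtrees with triples $(4,5,0)$, $(4,5,1)$, $(3,5,1)$. So the families are not indexed by basis choices and the letters of $W$. They are subtrees that have to be located and then parametrised from their own roots.

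\textbf{The missing idea} is a normal form in which these long cancellations are absorbed into pieces that $L$ and $R$ fix letter for letter, together with a finiteness theorem. The paper uses stable words $\prod_i\mathtt z^{e_i}w_i$ with $w_i$ monotone and $\mathtt z=\mathtt{aBAb}$. Since $L(\mathtt z)=R(\mathtt z)=\mathtt z$ in $F_2$, nothing collapses there. By contrast, your factor $uvu^{-1}v^{-1}$ has $2|u|+2|v|$ letters but equals $M(\mathtt{abAB})$, which is conjugate to $\mathtt z^{-1}$; this collapse is where your unbounded cancellation comes from.

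In stable form, applying $M$ only replaces $w_i$ by the monotone word $M(w_i)$, of length $x|w_i|_{\{\mathtt a,\mathtt A\}}+y|w_i|_{\{\mathtt b,\mathtt B\}}$ with $(x,y)=(1,1)M$. The cancellation against the adjacent $\mathtt z^{\pm1}$ is constant on the interior and on each boundary ray (Lemma~\ref{lem:cttcanc}). This yields three things:
\begin{itemize}
\item the shift $4K=4\sum|e_i|-2c_{\mathrm{int}}$;
\item the periodic correction $C(L)$, coming from the rays $x=1$ and $y=1$;
\item the function $\varphi_{P,Q}$, because $M\mapsto(1,1)M$ is a bijection from $\langle L,R\rangle_+$ onto coprime positive pairs.
\end{itemize}
That all but finitely many nodes of $T_\gamma$ are stable is Proposition~\ref{prop:nonstable}. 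It is proved via a complexity on admissible pushes that $L$ and $R$ strictly decrease. Your plan has no analogue of this.

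\textbf{Step~3 also contains an error.} The element $-I$ does not act by inversion on curves. It is the hyperelliptic involution $\mathtt a\mapsto\mathtt A$, $\mathtt b\mapsto\mathtt B$, which fixes simple curves but not, for example, $\llbracket\mathtt{aabAB}\rrbracket$. If it acted trivially, every stabiliser would contain torsion and $m=4$ would never occur. In the paper, the factor $4$ is $|\langle S\rangle|$ acting on $T_\gamma\cup T_{S(\gamma)}$. It is halved exactly when the stabiliser in standard position contains torsion, because then $S^2\alpha$ also lies in these trees (Proposition~\ref{thm:twotrees}).
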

	Note that $\varphi_{1,1}$ is nothing but Euler's totient function $\varphi$,
    and more generally, $\varphi_{k,k}(n) = \varphi(n/k)$ for any $k, n \in \mathbb{Z}_{\geq 1}$, where we adopt the convention $\varphi(x) = 0$ for $x \in \mathbb{R}\setminus\ZZ_{>0}$. Moreover, the constant $L_0$ of Theorem \ref{thm:mainthm} can be trivially bounded by an exponential of the minimal length in the orbit (see Lemma \ref{lem:boundL0}). 
	
	For fixed $P,Q$, the function $\varphi_{P,Q}(L)$ counts the coprime positive integer solutions of the linear Diophantine equation $Px+Qy=L$. Geometrically, these are the primitive points---that is, the visible points in Figure \ref{fig:eulerpq}---on the segment that the line $Px+Qy=L$ cuts in the open first quadrant. As $L$ grows, this count varies quasilinearly.
	
	\begin{figure}[ht]\centering
		\begin{overpic}[width=.6\linewidth]{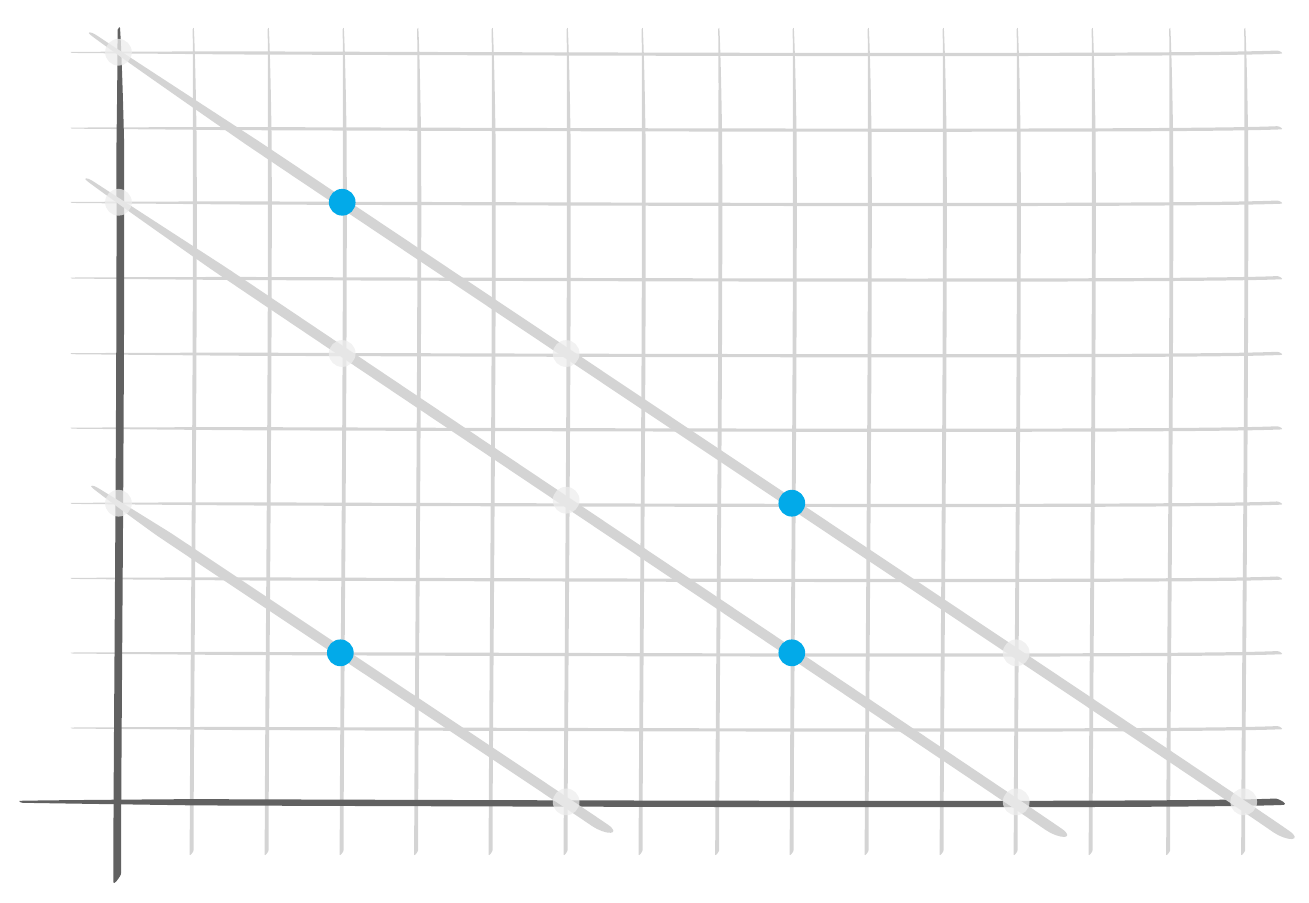}
		\end{overpic}
		\caption{Representation of $\varphi_{2,3}(12)=1$, $\varphi_{2,3}(24)=1$, and $\varphi_{2,3}(30)=2$.}
		\label{fig:eulerpq}
	\end{figure}
	
	\begin{examples}
		The counting functions of types up to self-intersection $3$ were computed by Chas in \cite{Chas}, some were proved and others verified up to long lengths.\\
		
		The algorithm in Section \ref{sec:algorithm} computes the counting formula from Theorem \ref{thm:mainthm} for any type in exponential time with respect to the minimal word length in the orbit, taking less than a second for writeable examples. All of the examples computed by Chas can now be verified. Some of them are the following---all coinciding with Chas' predictions. Denote 
		\[
		N_\gamma(L):=\# \{ \alpha \in \Mod(T) \cdot \gamma \mid \ell(\alpha) = L \}.
		\]
		
		Then,
		
		 \begin{table}[ht]
        \centering
        \begin{tabular}[t]{c|c|c}
            \toprule
            $\gamma$ & $N_L(\gamma)$ & $L_0$ \\
            \midrule
            $\mathtt{a}$ & $2\varphi_{1,1}(L) = 2\varphi(L)$ & $2$ \\
            \midrule
            $\mathtt{aabAB}$ & $4\varphi_{1,1}(L-4) = 4\varphi(L-4)$ & $6$ \\
            \midrule
            $\mathtt{abaBabAB}$ & $4\varphi_{2,2}(L-4) = \varphi((L-4)/2)$ & $9$ \\
            \midrule
            $\mathtt{aaaabb}$ & $4\varphi_{2,2}(L) + 4\varphi_{2,2}(L-4) = 4\varphi(L/2) + 4\varphi((L-4)/2)$ & $7$ \\
            \midrule
            $\mathtt{aabAbaBAb}$ & $4\varphi_{1,1}(L-4) + 4\varphi_{1,1}(L-8) = 4\varphi(L-4) + 4\varphi(L-8)$ & $9$ \\
            \bottomrule
        \end{tabular}
        \caption{Low-intersection examples.}
        \label{tab:exampleslow}
    \end{table}
		Note that all of the counting functions for low-intersection examples are linear combinations of Euler's totient functions. However, this neat form needs to be relaxed as higher-intersection cases appear. \\

		Denote by $[a_0,\hdots, a_{k-1}](L\ \mathrm{mod}\ k)$ the periodic function on $L$ that takes the value $a_i$ when $L\equiv i\pmod n$. That is, 
        \[
            [a_0, \dots, a_{k-1}](L \bmod k)
            \coloneqq
            \begin{cases}
                a_0 & \text{if } L \equiv 0 \bmod k, \\
                a_1 & \text{if } L \equiv 1 \bmod k, \\
                    & \vdots \\
                a_{k-1} & \text{if } L \equiv k-1 \bmod k.
        \end{cases}
        \]
        We can then compute compute any desired examples. In Table \ref{tab:exampleshigh} there are some examples where: either the counting functions are not Euler totient functions but weighted generalizations of it; or periodic corrections appear. Theorem \ref{thm:mainthm} ensures that the counting function of any type can be written in this form.
		
        \begin{table}[ht]
        \centering
        \begin{tabular}[t]{c|c|c}
            \toprule
            $\gamma$ & $N_L(\gamma)$ & $L_0$ \\
            \midrule
            $\mathtt{aaabbabb}$ & $8\varphi_{4,4}(L) + 2\varphi_{2,2}(L-4) + 4 [1, 0, -1, 0](L \bmod 4)$ & $9$ \\
            \midrule
            $\mathtt{aabbAAb}$ & $8\varphi_{2, 3} (L-4) + 8\varphi_{3, 3}(L) + 4 [-2, 0, 2](L \bmod 3)$ & $8$ \\
            \midrule
            $\mathtt{aaabbb}$ & $6\varphi_{3, 3} (L) + 4 \varphi_{2, 3}(L-4)+ 2 [-2, 0, 2](L \bmod 3)$ & $8$ \\
            \bottomrule
        \end{tabular}
        \caption{Higher-intersection examples.}
		\label{tab:exampleshigh}
    \end{table}

	\end{examples}
	
	Moreover, from Theorem \ref{thm:mainthm} we can obtain the asymptotic growth of the counting in terms of the Diophantine coefficients.
	
	\begin{restatable}{corollary}{asymptotic}\label{thm:asym}
		Using the notation of Theorem~\ref{thm:mainthm},
		asymptotically
		\[
		\# \{ \alpha \in \Mod(T) \cdot \gamma \mid \ell(\alpha) \leq L \}
		\sim
		\frac{3m}{\pi^2}
		\left( \sum_{i=1}^{N} \frac{1}{P_i Q_i} \right)
		L^2
		\]
		as $L \to \infty$.
	\end{restatable}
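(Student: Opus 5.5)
The plan is to sum the exact formula of Theorem~\ref{thm:mainthm} over lengths $\ell \le L$ and reduce everything to one asymptotic for $\varphi_{P,Q}$. For $L \ge L_0$ we can write
\[
\#\{\alpha \in \Mod(T)\cdot\gamma \mid \ell(\alpha)\le L\}
= O(1) + m\sum_{i=1}^N \sum_{\ell=L_0}^{L} \varphi_{P_i,Q_i}(\ell-4K_i) + m\sum_{\ell=L_0}^{L} C(\ell).
\]
Here the $O(1)$ term counts curves of length less than $L_0$, of which there are finitely many. Since $|C(\ell)|\le 2N$, the last sum is $O(L)$. Each shift by $4K_i$ changes a partial sum by at most finitely many terms, and each term is $O(L)$, because $\varphi_{P,Q}(\ell)\le \ell/P$. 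So the shifts cost only $O(L)$ as well. It therefore suffices to prove that for fixed $P,Q\ge 1$,
\[
\sum_{\ell\le L}\varphi_{P,Q}(\ell) \sim \frac{3}{\pi^2 PQ}L^2 .
\]

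To prove this, rewrite the left side as a lattice point count:
\[
\sum_{\ell\le L}\varphi_{P,Q}(\ell)=\#\{(x,y)\in\ZZ_{\ge1}^2 \mid Px+Qy\le L,\ \gcd(x,y)=1\},
\]
which counts primitive points in the triangle $\Delta_L=\{x,y>0,\ Px+Qy\le L\}=L\cdot\Delta_1$. The area of $\Delta_L$ is $L^2/(2PQ)$. The asymptotic then follows from the standard fact that the primitive lattice points in $L\cdot\Omega$, for a nice compact region $\Omega$ (say, one with rectifiable boundary), number $\frac{6}{\pi^2}\mathrm{Area}(\Omega)L^2+o(L^2)$. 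This gives $\frac{6}{\pi^2}\cdot\frac{L^2}{2PQ}=\frac{3}{\pi^2PQ}L^2$.

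If a self-contained argument is wanted, use Möbius inversion:
\[
\#\{\text{primitive points in }\Delta_L\}=\sum_{d\ge1}\mu(d)\,\#\bigl(\ZZ_{\ge1}^2\cap\Delta_{L/d}\bigr).
\]
Each inner count equals $\frac{L^2}{2PQd^2}+O(L/d)$, and it vanishes when $d>L/\min(P,Q)$. Summing gives
\[
\frac{L^2}{2PQ}\sum_{d}\frac{\mu(d)}{d^2}+O(L\log L)=\frac{3L^2}{\pi^2PQ}+O(L\log L).
\]

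Combining the two steps, multiplying by $m$ and summing over $i$ yields the corollary. The only step with real content is the lattice point count in the second step. The obstacle there is bookkeeping: bounding the boundary error uniformly in $d$ so that it sums to $o(L^2)$. Since $\Delta_{L/d}$ is a right triangle, counting columns directly ($x$ ranges up to $L/(dP)$, and each column contributes $\lfloor (L/d-Px)/Q\rfloor$ points) gives the $O(L/d)$ error explicitly.
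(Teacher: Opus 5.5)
Your proof is correct, but it reaches the constant by a somewhat different route than the paper.

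\textbf{The paper's route.} It works pointwise. It quotes a cited formula $N_{P,Q}(L)=\frac{L\gcd(P,Q)}{PQ}+E(L)$, with $E$ bounded, for the number of positive solutions of $Px+Qy=L$. It then M\"obius-inverts $N_{P,Q}(L)=\sum_{d\mid L}\varphi_{P,Q}(L/d)$ to obtain $\varphi_{P,Q}(L)=\frac{g^2}{PQ}\varphi(L/g)+\sum_{d\mid L/g}\mu(d)E(L/d)$ with $g=\gcd(P,Q)$. The summed error is bounded by the divisor sum $O(L\log L)$, and the main term is handled by the classical asymptotic $\sum_{l\le L}\varphi(l)\sim 3L^2/\pi^2$.

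\textbf{Your route.} You sum first and M\"obius-invert the two-dimensional count of lattice points in the triangle $\Delta_L$. The only inputs are then the elementary column-count estimate for a right triangle and $\sum_d\mu(d)/d^2=6/\pi^2$. In effect this is the standard proof of the totient asymptotic, carried out directly for the weighted triangle.

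\textbf{What each buys.} The paper obtains a pointwise identity: $\varphi_{P,Q}$ is a rescaled Euler totient up to a divisor-bounded error, consistent with $\varphi_{k,k}(n)=\varphi(n/k)$. The cost is an external citation for $N_{P,Q}$ and for the totient asymptotic. Your argument gives only the averaged statement, but it is fully self-contained.

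You also make explicit the reduction from Theorem~\ref{thm:mainthm}, which the paper leaves implicit:
\begin{itemize}
\item the finitely many curves of length below $L_0$ contribute $O(1)$;
\item the periodic term $C$ contributes $O(L)$;
\item the shifts by $4K_i$ cost $O(L)$, since $\varphi_{P,Q}(\ell)\le \ell/P$.
\end{itemize}

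One small piece of bookkeeping is worth writing out. The inner counts vanish for $d>L/\min(P,Q)$, but the approximations $\frac{L^2}{2PQd^2}$ do not. The discarded tail $\frac{L^2}{2PQ}\sum_{d>L/\min(P,Q)}\mu(d)/d^2$ is $O(L)$, so it is harmlessly absorbed into your $O(L\log L)$ error.
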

	
	The mapping class group $\mathrm{Mod}(T)$ is isomorphic to $SL_2(\ZZ)$ and is generated by the two Dehn twists around the generators $\mathtt{a}$ and $\mathtt{b}$. For convenience, we will denote these by $L$ and $R$, respectively. On words, we will use the convention that their actions are defined by 
	\[
	L:\begin{matrix}
		& \mathtt{a}&\mapsto &\mathtt{a} \\
		& \mathtt{b}& \mapsto & \mathtt{ab}
	\end{matrix}\quad 
	\text{ and }\quad R:
	\begin{matrix}
		& \mathtt{a}&\mapsto &\mathtt{ab} \\
		& \mathtt{b}& \mapsto & \mathtt{b}.
	\end{matrix}
	\]
	
	The isomorphism $\mathrm{Mod}(T)\leftrightarrow SL_2(\ZZ)$ is then given by $L=\begin{psmallmatrix}
		1 & 1 \\ 
		0 & 1
	\end{psmallmatrix}$ and 
	$R=\begin{psmallmatrix}
		1 & 0 \\ 
		1 & 1
	\end{psmallmatrix}$. It is a classical result (see e.g. \cite[Proposition~3.3]{bram18}) that the semigroup $\langle L, R\rangle_+$ of the positive products of $L$ and $R$ is a free semigroup isomorphic to $SL_2(\NN)$, the semigroup formed by the subset $SL_2(\ZZ)$ with positive matrices. Denote also $S:= \begin{psmallmatrix}
		0 & -1 \\
		1 & 0
	\end{psmallmatrix}$. Note that $S$ is the order-four elliptic element of the mapping class group, corresponding to the renamings of the generators inside their set. For an essential curve $\gamma\in \mathcal{C}(T)$ we will consider two suborbits of $\mathrm{Mod}(T)\cdot \gamma$: $T_\gamma:= \langle L, R\rangle_+ \cdot \gamma$ and $T_{S(\gamma)}:= \langle L, R\rangle_+ \cdot S(\gamma)$. We will see these suborbits as binary infinite rooted trees with vertices labelled by curves and edges corresponding to the $L$,$R$-actions and, unless otherwise stated, not identifying vertices labelled by the same curve. A foundational result for the construction of Theorem \ref{thm:mainthm} is the following. We say that a curve $\gamma\in\mathcal{C}(T)$ is in standard position if it is normalized such that $\mathrm{Stab}_{\mathrm{Mod}(T)}(\gamma)$ is one of the following:
    \begin{equation}\label{eq:stand}\{I\}, \langle -I \rangle, \langle U\rangle, \langle S \rangle, \langle -U\rangle, \langle L\rangle, \langle -L\rangle, \langle -I, L\rangle, \end{equation} where $U:=\begin{psmallmatrix}
		0 & -1 \\ 1 & -1
	\end{psmallmatrix}$.

	\begin{restatable}{proposition}{twotrees}\label{thm:twotrees} For an essential curve $\gamma\in \mathcal{C}(T)$, 
		\begin{enumerate}
			\item Every $\alpha\in \mathrm{Mod}(T)\cdot \gamma$ satisfies $S^k\alpha\in T_\gamma\cup T_{S(\gamma)}$ for some $k\in\{0,1,2,3\}$;
			\item Assume that $\gamma$ is in standard position and has minimal length in the orbit. In the notation of Theorem \ref{thm:mainthm}, for all $L > \ell(\gamma)$,
			\[
			\#\{\alpha\in\mathrm{Mod}(T)\cdot \gamma\mid \ell(\alpha)=L\}=m\cdot \# \{ \alpha \in T_\gamma\cup T_{S(\gamma)} \mid \ell(\alpha) = L \},
			\]
			where 
			$m=2$ if $\mathrm{Stab}_{\mathrm{Mod}(T)}(\gamma)$ has some non-trivial torsion elements, and $m=4$ otherwise. 
		\end{enumerate}
		
	\end{restatable}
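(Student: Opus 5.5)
The plan is to work throughout in $\mathrm{PSL}_2(\ZZ)$ rather than $\SL_2(\ZZ)$. The reason is that $-I$ sends an oriented curve to its inverse, so it acts trivially on unoriented curves. The orbit $\Mod(T)\cdot\gamma$ is therefore really a $\mathrm{PSL}_2(\ZZ)$-orbit, and $S^2=-I$ acts trivially. Two further facts are used: $S$ preserves word length, since it only renames and inverts generators, and $T_{S(\gamma)}=\langle L,R\rangle_+S\cdot\gamma$.

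\textbf{Part 1.} It suffices to prove a purely matrix-theoretic statement. For every $g\in\SL_2(\ZZ)$ there are $k\in\{0,1,2,3\}$ and $M\in\SL_2(\NN)$ with either $S^kg=M$ or $S^kg=MS$. Then for $\alpha=g\gamma$ we get $S^k\alpha\in T_\gamma\cup T_{S(\gamma)}$, using the quoted fact that $\langle L,R\rangle_+=\SL_2(\NN)$.
\begin{itemize}
\item Matrices in $\SL_2(\NN)$ are those whose two columns $v_1,v_2$ lie in the closed first quadrant.
\item Matrices in $\SL_2(\NN)S$ are those with $v_1$ in the closed first quadrant and $v_2$ in the closed third quadrant, since $(a\ b;c\ d)S=(b\ {-a};d\ {-c})$.
\item Left multiplication by $S$ rotates both columns by a quarter turn.
\item Since $\det(v_1,v_2)=1$, the vector $v_2$ lies strictly counterclockwise from $v_1$, within angle $\pi$.
\end{itemize}
Rotate by a suitable $S^k$ so that $v_1$ lies in the quadrant $\{x\ge 0,\,y>0\}$ or on the positive $x$-axis. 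The basic check is that $v_2$ then lands either in the closed first quadrant or in the closed third quadrant. For example, $SL^{-1}=(0\ {-1};1\ {-1})\in\SL_2(\NN)S$. The delicate part of Part 1 is choosing the quadrant convention so that vectors on the coordinate axes are handled correctly. I expect this to be a short case check on the signs of the entries, using $ad-bc=1$ to rule out mixed patterns.

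\textbf{Part 2.} Consider the map
\[
\Phi\colon \bigl(T_\gamma\sqcup T_{S(\gamma)}\bigr)_{L}\times\{0,1,2,3\}\to\{\alpha\in\Mod(T)\cdot\gamma\mid\ell(\alpha)=L\},\qquad (\beta,k)\mapsto S^{k}\beta,
\]
where the subscript means length exactly $L$, counted as distinct curves. By Part 1 and the length invariance of $S$, $\Phi$ is surjective.

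Because $S^2$ acts trivially, only $k\in\{0,1\}$ can give new curves. Moreover $S$ exchanges the two families: it sends $\SL_2(\NN)$-type elements to $S\SL_2(\NN)$-type elements. So the count becomes $\#\{\text{orbit}_L\}=2\cdot\#\{S^{k}\beta\}$, modulo coincidences. The whole task is to show that:
\begin{itemize}
\item when the stabilizer is torsion-free, the two trees contribute disjointly and each curve is hit exactly twice in the sense above, giving $m=4$;
\item when $\mathrm{Stab}(\gamma)$ contains $S$ or $U$ (or their negatives), $T_\gamma$ and $T_{S(\gamma)}$ overlap, and the overlap cancels exactly one factor of $2$, giving $m=2$.
\end{itemize}

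The approach to these coincidences is as follows. Suppose $S^kM_1h_1\gamma=M_2h_2\gamma$ with $M_i\in\SL_2(\NN)$ and $h_i\in\{I,S\}$. Then $h_2^{-1}M_2^{-1}S^kM_1h_1\in\mathrm{Stab}(\gamma)$. Go through the finite list \eqref{eq:stand}, using uniqueness of the normal form in $\SL_2(\ZZ)$: a product of a positive word, a power of $S$, and a positive word is written uniquely, because $\SL_2(\NN)$ is free.
\begin{itemize}
\item \emph{Trivial stabilizer.} This forces $M_1=M_2$ and the trivial pattern.
\item \emph{Stabilizer $\langle U\rangle$ or $\langle S\rangle$.} Here $U=S L^{-1}$-type expressions appear; for instance $U=-SR$ or similar. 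This produces exactly one extra identification for each curve, which gives $m=2$.
\item \emph{Parabolic stabilizers $\langle \pm L\rangle$.} Coincidences $L^n\gamma=\gamma$ occur only among vertices of length $\ell(\gamma)$, which is excluded by the hypothesis $L>\ell(\gamma)$. To see this, use minimality of $\ell(\gamma)$ and the fact that applying $R$, or a positive word containing $R$, to a curve not of minimal length strictly increases its length.
\end{itemize}

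\textbf{Main obstacle.} I expect the hardest step to be this last length-monotonicity claim along the trees, together with the exact bookkeeping of coincidences in the torsion cases. The claim is that for $L>\ell(\gamma)$ no two distinct tree paths give the same curve except through the stabilizer. I would first prove it by tracking how $L$ and $R$ act on cyclic words in standard position, namely by substituting $\mathtt b\mapsto\mathtt{ab}$ and $\mathtt a\mapsto\mathtt{ab}$, which add at least one letter unless cancellation occurs. Minimality together with standard position should rule out such cancellation.
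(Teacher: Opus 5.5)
Your Part~1 is the paper's argument (Lemma~\ref{lem:SLfact}: the two columns lie in equal or opposite closed quadrants, then you rotate by a power of $S$). It works once the axis cases are settled by letting the choice of rotation depend on $v_2$. For example, $L^{-1}$, with columns $(1,0),(-1,1)$, must be rotated to $SL^{-1}=U=RS$.

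\textbf{The gap in Part~2.} Part~2 rests on a false premise. The element $-I=S^2$ is the hyperelliptic involution ($\mathtt{a}\mapsto\mathtt{A}$, $\mathtt{b}\mapsto\mathtt{B}$). It reverses every \emph{simple} curve, but it does not fix non-simple curves in general. For instance, it sends $\llbracket\mathtt{aabAB}\rrbracket$ to $\llbracket\mathtt{AABab}\rrbracket$, and neither $\mathtt{AABab}$ nor its inverse $\mathtt{BAbaa}$ is a cyclic rotation of $\mathtt{aabAB}$. So $\Mod(T)\cdot\gamma$ is not a $\mathrm{PSL}_2(\ZZ)$-orbit, and a generic $\langle S\rangle$-orbit has four curves, not two.

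Your premise even contradicts the statement you are proving. If $-I$ fixed every curve, every stabilizer would contain the non-trivial torsion element $-I$, forcing $m=2$ always. Yet $m=4$ is the generic case, e.g.\ $4\varphi(L-4)$ for $\mathtt{aabAB}$ in Table~\ref{tab:exampleslow}. Consequences:
\begin{itemize}
\item Once you discard $k\in\{2,3\}$, your bookkeeping can never produce $m=4$.
\item It cannot distinguish $\langle -I\rangle$ from $\{I\}$, or $\langle -I,L\rangle$ from $\langle L\rangle$, although these have different values of $m$.
\item The claim that $S$ exchanges the two families is also wrong. Left multiplication by $S$ does not carry $T_\gamma$ to $T_{S(\gamma)}$, because $S\,\SL_2(\NN)\neq\SL_2(\NN)\,S$ (e.g.\ $SL=R^{-1}S$).
\end{itemize}

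\textbf{What is missing.} Your map $\Phi$ is the right starting point, since it amounts to the paper's count over $\langle S\rangle$-orbits. What is missing is the mechanism that makes the fibre sizes constant. The paper writes the count as $\sum 4/|J(\alpha)|$ over curves $\alpha$ of length $L$ in $T_\gamma\cup T_{S(\gamma)}$, where $J(\alpha)=\{j\in\ZZ/4 : S^j\alpha\in T_\gamma\cup T_{S(\gamma)}\}$. Then:
\begin{itemize}
\item By uniqueness of the factorization $M=S^iM'S^\varepsilon$, for $\alpha=M\gamma\notin\langle S\rangle\gamma$ one has $J(\alpha)=\{-\mathrm{rot}(Mh^{-1}) : h\in\mathrm{Stab}(\gamma)\}$.
\item The rotation is read off from the quadrant of $Me_1$.
\item Running through the standard-position stabilizers shows that odd rotations never occur, so $J(\alpha)\subseteq\{0,2\}$.
\item The same check shows $2\in J(\alpha)$ exactly when $-I$ or $U$ stabilizes $\gamma$. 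For $U$ one uses $Me_1+MUe_1+MU^2e_1=0$, so the three vectors cannot share a quadrant.
\end{itemize}
Your ``exactly one extra identification'' in the $\langle U\rangle$ and $\langle S\rangle$ cases is precisely this content, and it is left unproved.

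\textbf{The parabolic case.} This case needs no length-monotonicity. Your proposed route also fails, because minimality does not rule out cancellation: $R(\mathtt{aabAB})=\mathtt{abab}\,\mathtt{bB}\,\mathtt{AB}$. Instead, since $Le_1=e_1$, one has $\mathrm{rot}(ML^n)=\mathrm{rot}(M)=0$ unless $M'S^\varepsilon e_1$ lies on an axis. That forces $M'$ to be a power of $L$ or $R$, hence $\alpha\in\langle S\rangle\gamma$, which is excluded by $L>\ell(\gamma)$.
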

	
	Note that $m$ is the same coefficient as in Theorem \ref{thm:mainthm}. The proof of Theorem \ref{thm:mainthm} is based on a notion of stability that controls the growth of lengths at the trees $T_\gamma$ and $T_{S(\gamma)}$ from Proposition \ref{thm:twotrees}. We will denote the concatenation of words $u$, $v$ as a product $uv$. A word in $\{\mathtt{a,b,A,B}\}$ is \emph{monotone} if all exponents are either positive or negative. From now on, we will denote by $\mathtt{z}:=\mathtt{aBAb}$ the commutator and $\mathtt{Z}:=\mathtt{z}^{-1}$. We call a curve $\gamma\in\mathcal{C}(T)$ \emph{stable} if one of its representatives in $\pi_1(T)$ can be written as $\prod_{i=1}^n \mathtt{z}^{e_i}w_i$, with all $e_i\neq0$ when $n>1$, and all $w_i$ non-empty monotone words. 
	
	\begin{restatable}{proposition}{finitestable}\label{prop:nonstable}
		For any essential curve $\gamma\in\mathcal{C}(T)$, there are finitely many elements in $T_\gamma$ that are not stable.
	\end{restatable}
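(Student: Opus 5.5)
The plan is to combine an invariance property of stability under the positive semigroup with a König's lemma argument on the binary tree $T_\gamma$.

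\textbf{Step 1: stability is inherited by descendants.} I will first check that the set of stable curves is closed under $L$ and $R$. A direct computation gives $L(\mathtt{z})=\mathtt{a}\,\mathtt{BA}\,\mathtt{A}\,\mathtt{ab}=\mathtt{aBAb}=\mathtt{z}$ and $R(\mathtt{z})=\mathtt{ab}\,\mathtt{B}\,\mathtt{BA}\,\mathtt{b}=\mathtt{z}$ after free reduction. So both twists fix the commutator, and hence fix $\mathtt{Z}$ as well. Moreover $L$ and $R$ send positive letters to positive words and negative letters to negative words. They therefore map a nonempty monotone word to a nonempty monotone word of the same sign. Consequently a representative $\prod_{i=1}^n \mathtt{z}^{e_i}w_i$ of a stable $\alpha$ is sent to a word of the same shape, so $L\alpha$ and $R\alpha$ are stable. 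Thus the set of unstable vertices of $T_\gamma$ is closed under passing to the parent, i.e.\ it is a rooted subtree. By König's lemma (the tree is binary, hence locally finite), this subtree is finite unless it contains an infinite ray. It therefore suffices to show that along every infinite ray $\gamma, X_1\gamma, X_2X_1\gamma,\dots$ with $X_j\in\{L,R\}$ some vertex is stable.

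\textbf{Step 2: local analysis at sign changes.} Fix a cyclically reduced representative of $\gamma$ and cut it at its sign changes into maximal monotone syllables. Everything then reduces to understanding what $L$ and $R$ do near a \emph{junction}, meaning a two-letter pattern where a positive letter meets a negative one ($\mathtt{aB},\mathtt{bA},\mathtt{Ab},\mathtt{Ba},\mathtt{aA}$-free variants, etc.). For each of the finitely many junction types I will compute the images under the words $LR$, $RL$, $L^k$ and $R^k$. The aim is to show that after a bounded number of alternations between $L$ and $R$, every junction is locally rewritten as either
\begin{enumerate}
\item a power $\mathtt{z}^{\pm1}$ sitting between two monotone blocks, or
\item a cancellation that merges two syllables of the same sign.
\end{enumerate}
Case 1 is illustrated by the identity $\mathtt{a}\cdot\mathtt{BA}\cdot\mathtt{b}$ arising from $L$ applied to $\mathtt{aB}\cdots\mathtt{b}$.

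To make this quantitative I will use the complexity
\[
c(\alpha)=\#\{\text{junctions not absorbed into a }\mathtt{z}^{\pm1}\}.
\]
This quantity is non-increasing under $L$ and $R$, and it strictly decreases after each occurrence of $LR$ or $RL$ in the ray. Rays with infinitely many alternations then reach $c=0$, which is exactly stability. The condition that all $e_i\neq 0$ must be handled: neighbouring monotone blocks of the same sign are merged.

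\textbf{Step 3: eventually constant rays (the expected main obstacle).} The remaining rays have tail $L^\infty$ or $R^\infty$, and by the symmetry exchanging the roles of $\mathtt{a}$ and $\mathtt{b}$ it suffices to treat $L^n$ with $n\to\infty$. Here $\mathtt{b}\mapsto\mathtt{a}^n\mathtt{b}$ and $\mathtt{B}\mapsto\mathtt{BA}^n$, while $\mathtt{a}$ is fixed. Some junction patterns are invariant: for example $\mathtt{B}\mathtt{a}^k\mathtt{b}\mapsto \mathtt{BA}^n\mathtt{a}^k\mathtt{a}^n\mathtt{b}=\mathtt{B}\mathtt{a}^k\mathtt{b}$. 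This is why a naive length or complexity argument fails.

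I will instead rewrite $\gamma$ in terms of blocks $\mathtt{a}^{k}$ separated by letters $\mathtt{b}^{\pm1}$. I then check each block pattern $\mathtt{b}^{\epsilon}\mathtt{a}^k\mathtt{b}^{\delta}$ directly.
\begin{itemize}
\item When $\epsilon=\delta$, the $\mathtt{a}$-exponent grows linearly in $n$ with a sign matching the adjacent $\mathtt{b}$'s, giving monotone pieces.
\item When $\epsilon\neq\delta$, the pattern is either fixed or of the form $\mathtt{B}\mathtt{a}^{k}\mathtt{b}$ with $k=\pm1$, which is $\mathtt{z}^{\pm1}$ up to cyclic rotation. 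For general $k$ it is a conjugate of $\mathtt{a}^{k}$ times commutator powers, which I expect to regroup as $\mathtt{z}^{e}$ followed by a monotone word.
\end{itemize}
For $n$ larger than the maximal $|k|$ occurring, the signs of all $\mathtt{a}$-exponents stabilize and the word acquires the required form. This regrouping, and the verification that no exponent $e_i$ is forced to vanish, is the delicate case analysis I anticipate being the bulk of the proof.
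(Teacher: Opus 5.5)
Your Step 1 is the paper's Lemma~\ref{lem:stable}, and the reduction via K\"onig's lemma to showing that every infinite ray meets a stable vertex is sound. The gap is that the two steps carrying the actual content are announced rather than proved, and the complexity in Step 2 is not well defined.

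Stability is not visible in the reduced word. You have to insert commutator letters using identities such as $\mathtt{aBA}=\mathtt{zB}$, $\mathtt{ab}^k\mathtt{A}=(\mathtt{bZ})^k$, $\mathtt{Ba}^k\mathtt{b}=(\mathtt{Za})^k$ and $\mathtt{BA}^k\mathtt{b}=(\mathtt{Az})^k$, and the choice matters. Rewriting $\mathtt{baBab}$ as $\mathtt{bzBaab}=\mathtt{bzZaZa}$ produces a cancelling $\mathtt{zZ}$, whereas $\mathtt{ba}\cdot\mathtt{Bab}=\mathtt{baZa}$ does not. So ``junctions not absorbed into a $\mathtt{z}^{\pm1}$'' depends on a convention you have not fixed. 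Its claimed monotonicity under $L$ and $R$, and its strict drop at each $LR$ or $RL$, are therefore unsupported assertions.

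There is also no finite list of junction types to check, because the effect of a twist depends on the adjacent run lengths. For example, $L(\mathtt{aBB})=\mathtt{zBBA}$ absorbs the $\mathtt{aB}$ junction, but $L(\mathtt{aBa})=\mathtt{aB}$ does not, since the $\mathtt{A}$ that would complete $\mathtt{z}$ is cancelled. Step 3 likewise defers exactly the hard part, namely regrouping so that no $\mathtt{zZ}$ or $\mathtt{Zz}$ appears. Note also that $\mathtt{Ba}^{\pm1}\mathtt{b}$ is not a rotation of $\mathtt{z}^{\pm1}$; it equals $\mathtt{Za}$, respectively $\mathtt{Az}$.

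What you are missing is the paper's length-based complexity on a canonical form. Lemma~\ref{lem:admissible} rewrites any cyclic word into an admissible form $\prod\mathtt{z}^{e_i}w_i$, using a preference rule that never creates $\mathtt{zZ}$. In this form each $w_i$ has all its $\mathtt{a}$-exponents of one sign and all its $\mathtt{b}$-exponents of one sign, and the complexity is the total length of the mixed $w_i$.

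On a mixed piece $\mathtt{a}^{n_1}\mathtt{B}^{m_1}\cdots$ the two twists behave as follows:
\begin{itemize}
\item $R$ keeps every $\mathtt{a}$ and deletes one $\mathtt{B}$ per block, through the cancellation $\mathtt{bB}$.
\item $L$ deletes one $\mathtt{a}$ per block, through the cancellation $\mathtt{Aa}$.
\end{itemize}
The newly created $\mathtt{b}$'s (for $R$), respectively $\mathtt{A}$'s (for $L$), are pushed into monotone pieces by the rewritings, and $\mathtt{A}/\mathtt{b}$ pieces are handled symmetrically.

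Hence the complexity drops under \emph{each} of $L$ and $R$, and every unstable vertex has depth at most $c(\gamma)$. Finiteness then follows with no ray dichotomy and no K\"onig argument. The stalling you detected along $L^\infty$ is real for a junction count: a piece $\mathtt{A}^n\mathtt{b}\cdots$ loses a single $\mathtt{A}$ per application of $L$ while keeping its junctions. A length count, however, sees it decrease at every step.
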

	
	This notion of stability is spread along positive suborbits as follows. Both $[\mathtt{z}]$ and $[\mathtt{Z}]$ are invariant in $\pi_1(T)$ by the $L$,$R$-actions. Since monotonicity is also carried by the $L$,$R$-actions, stability is carried by them too. We call a subtree of $T_\gamma$ \emph{stable} if all of its vertices are stable, and \emph{forward-complete} if it is complete by the positive $L$- and $R$-actions.
	
	\begin{corollary}\label{cor:maxdisjoint}
		The tree $T_\gamma$ is decomposed into finitely-many disjoint, forward-complete, maximal stable subtrees.
	\end{corollary}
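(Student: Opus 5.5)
The plan is to combine Proposition~\ref{prop:nonstable} with the observation just made that stability is inherited along positive $L$,$R$-actions. View $T_\gamma$ as the infinite rooted binary tree whose vertices are the elements $g\in\langle L,R\rangle_+$, labelled by $g\cdot\gamma$. Because the semigroup is free, each vertex $g$ has exactly two children, $Lg$ and $Rg$, and a unique path back to the root. Let $\mathcal{U}$ be the set of vertices whose label is not stable. By Proposition~\ref{prop:nonstable}, $\mathcal{U}$ is finite. Since $[\mathtt{z}]$ is fixed by $L$ and $R$ and monotone words are sent to monotone words, every descendant of a stable vertex is stable. Equivalently, every ancestor of an unstable vertex is unstable, so $\mathcal{U}$ is closed under taking parents and forms a finite rooted subtree, possibly empty.

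Next, define the candidate subtrees. Call a vertex $v$ a \emph{stable root} if $v$ is stable and either $v$ is the root or its parent lies in $\mathcal{U}$. For each stable root $v$, let $T^v$ be the full forward subtree $\langle L,R\rangle_+\cdot v$, meaning $v$ together with all its descendants. Then:
\begin{enumerate}[(i)]
\item $T^v$ is stable by heredity.
\item $T^v$ is forward-complete by construction.
\item $T^v$ is maximal: any strictly larger connected stable subtree containing $T^v$ would have to contain the parent of $v$, which is unstable.
\end{enumerate}

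Now check disjointness and coverage. Two distinct stable roots are incomparable in the ancestor order, because a strict ancestor of a stable root is unstable. Hence their forward subtrees are disjoint, using the uniqueness of paths in the free semigroup tree. For coverage, walk up from any stable vertex $w$ until the first stable ancestor whose parent is unstable, or until the root. The vertex reached is a stable root $v$, and $w\in T^v$. So the stable part of $T_\gamma$ is exactly $\bigsqcup_v T^v$, and $T_\gamma=\mathcal{U}\sqcup\bigsqcup_v T^v$.

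Finally, finiteness. Stable roots are either the root itself (when $\mathcal{U}=\emptyset$, giving a single subtree) or children of vertices in $\mathcal{U}$. There are therefore at most $2\#\mathcal{U}$ of them, which is finite. I read ``decomposed'' as decomposing $T_\gamma$ minus the finitely many unstable vertices, and would note this in the proof.

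The only delicate point is the heredity claim, namely that the images under $L$ and $R$ of a word of the form $\prod\mathtt{z}^{e_i}w_i$ again admit such a representative up to conjugacy. The paper has asserted this just before the corollary, so I will invoke it directly.
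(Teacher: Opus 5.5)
Your proposal is correct and follows the paper's route. The paper declares the corollary a direct consequence of Proposition~\ref{prop:nonstable} (finitely many unstable vertices) and Lemma~\ref{lem:stable} (stability passes to $L$- and $R$-images), which is exactly the heredity you invoke; you additionally spell out the tree bookkeeping that the paper leaves implicit: the unstable set is a finite parent-closed subtree, the stable roots are the root or children of unstable vertices, and their forward subtrees partition the stable part.
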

	
	Corollary \ref{cor:maxdisjoint} is depicted in Figure \ref{fig:stabletrees}. Take a curve $\gamma$ and set $\tilde\gamma:=S(\gamma)$. Construct the two trees $T_\gamma$ and $T_{\tilde\gamma}$ by applying the $L$,$R$-actions. Proposition \ref{prop:nonstable} says that there are finitely-many unstable curves---corresponding to the vertices outside the blue zone in the figure---and Corollary \ref{cor:maxdisjoint} decomposes these trees, up to the finite unstable complement, into maximal stable trees---like the blue $T_{R(\gamma)}$ tree in the figure.
	
	\begin{figure}[h!]\centering
		\begin{overpic}[width=.9\linewidth]{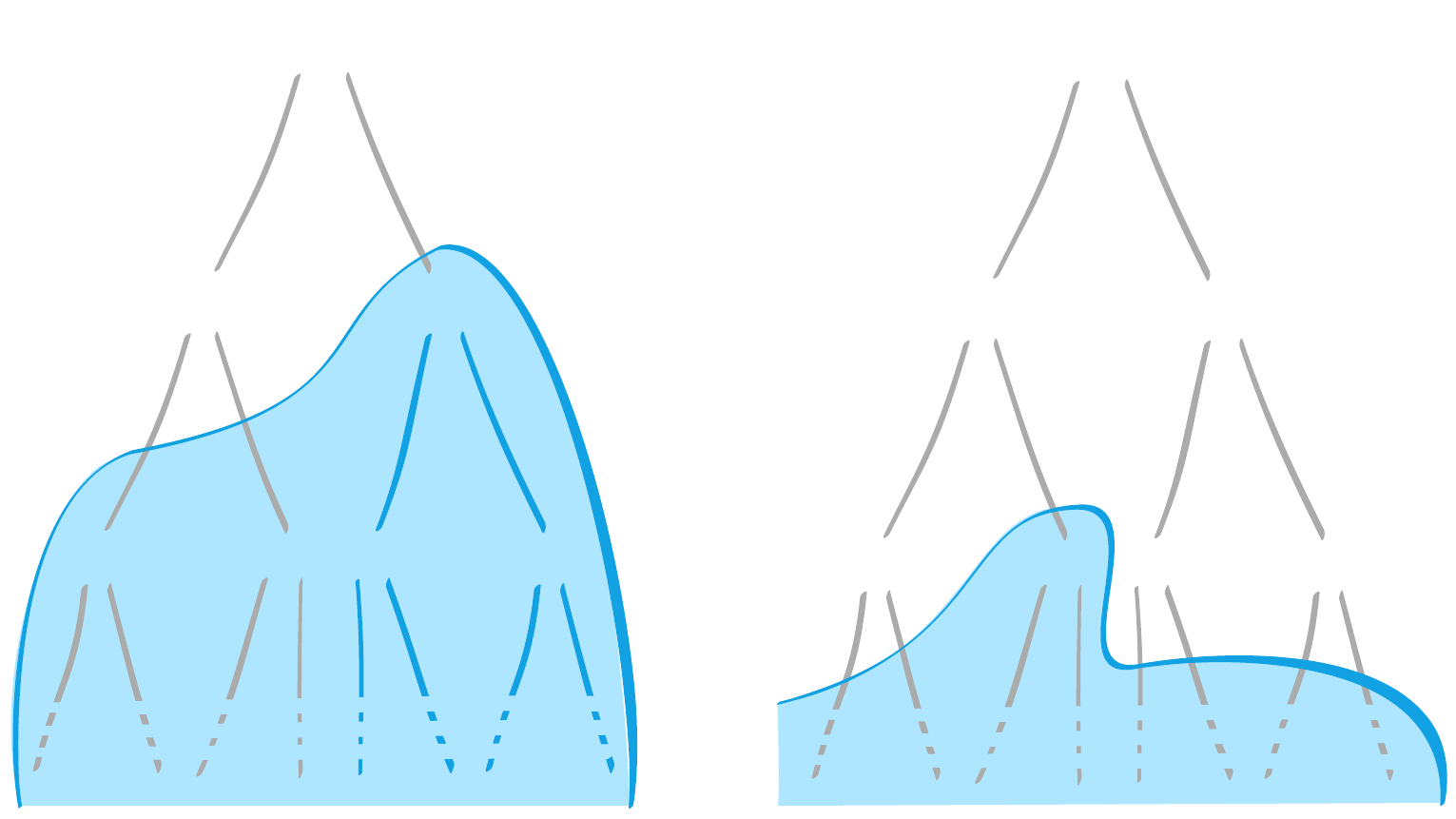}
			\put(5,50){\large$T_\gamma$}
			\put(59,50){\large$T_{\tilde\gamma}$}
			\put(21,53){$\gamma$}
			\put(75,53){$\tilde\gamma$}
			\put(11,35){$L(\gamma)$}
			\put(28,35){\color{cyan}$R(\gamma)$}
			\put(28.5,23){\color{cyan}$T_{R(\gamma)}$}
			\put(64.5,34.5){$L(\tilde\gamma)$}
			\put(81,34.5){$R(\tilde\gamma)$}
			\put(3,17){$L^2(\gamma)$}
			\put(14,17){$RL(\gamma)$}
			\put(23,17){\color{cyan}$LR(\gamma)$}
			\put(34,17){\color{cyan}$R^2(\gamma)$}
			\put(57,17){$L^2(\tilde\gamma)$}
			\put(68,17){$RL(\tilde\gamma)$}
			\put(76.5,17){$LR(\tilde\gamma)$}
			\put(88,17){$R^2(\tilde\gamma)$}
		\end{overpic}
		\caption{Stability of the trees $T_\gamma$ and $T_{\tilde\gamma}$.}
		\label{fig:stabletrees}
	\end{figure}
	
	This will allow us to prove a stronger version of Theorem \ref{thm:mainthm}.
	
	\begin{restatable}{theorem}{theoremsubtree}\label{thm:mainthm_subtree} Let $\gamma\in\mathcal{C}(T)$ be an essential curve. Let $\Gamma \subseteq T_\gamma$ be a maximal stable subtree. There exist $(P,Q,K)\in\ZZ^2_{\geq1}\times \ZZ_{\geq0}$, $L_0\in\ZZ_{\geq1}$ such that, for all integer $L\geq L_0$, we have
		\[
		\# \{ \alpha \in \Gamma \mid \ell(\alpha) = L \}
		=
		\varphi_{P, Q}(L - 4 K)
		+
		C(L),
		\]
		where
		\[
		\varphi_{P, Q}(L)
		\coloneqq
		\# \{ (x, y) \in \ZZ_{>0}^2 \mid Px + Qy = L, \ \gcd(x, y) = 1 \},
		\]
		and $C(L) \in \{-2,\hdots,2\}$ is an explicit periodic function with period at most $\mathrm{lcm}(P,Q)$.
	\end{restatable}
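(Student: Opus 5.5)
Let $\beta$ be the root of $\Gamma$, so that $\Gamma=\langle L,R\rangle_+\cdot\beta$, and write a stable representative $\beta=\prod_{i=1}^n \mathtt{z}^{e_i}w_i$ with every $w_i$ non-empty and monotone. The plan is to compute $\ell$ on $\Gamma$ exactly and then count the vertices of each length.

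\textbf{Length formula.} The actions of $L$ and $R$ fix $[\mathtt{z}]$ and $[\mathtt{Z}]$ and preserve monotonicity. Hence for every $M\in\langle L,R\rangle_+$ the word $M(\beta)=\prod \mathtt{z}^{e_i}M(w_i)$ is again of the same shape. I would first show that this word, after the evident cancellations at the junctions between $\mathtt{z}^{\pm1}$ and the adjacent monotone blocks, is cyclically reduced. Its length should then be
\[
\ell(M\beta)=4K+\sum_i \ell(M w_i)-c,
\]
where $K=\sum|e_i|$ and $c$ is a correction depending only on the signs of the $e_i$ and the first and last letters of the $w_i$. By maximality of stable subtrees, $c$ is constant on $\Gamma$ once the first step below the root is fixed; I would absorb it into the finitely many exceptional lengths or into $C(L)$.

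For a monotone positive word $w$ with abelianization $(p,q)$, the word $Mw$ is monotone with abelianization $M^{T}(p,q)$ (up to convention). Its length is therefore the linear functional $(p,q)\cdot(x\text{-stuff})$. Concretely, $\sum_i \ell(Mw_i)=P'\,x+Q'\,y$, where $(x,y)$ is a column of $M$ and $P',Q'$ come from the total abelianized exponents of the $w_i$. More precisely, $\ell(Mw)=|M(p,q)|_1$, which is linear in $M$ on positive matrices. Writing $M=\begin{psmallmatrix} a&b\\ c&d\end{psmallmatrix}$, the length is $\ell(M\beta)=4K+P(a+c)+Q(b+d)$ up to the constant correction, with $P=\sum p_i$ and $Q=\sum q_i$. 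Negative monotone blocks contribute the same quantities by symmetry.

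\textbf{Counting.} The map $M\mapsto(u,v)=(a+c,\,b+d)$, the vector of column sums, must then be analysed on $SL_2(\NN)$, which is free on $L,R$ so that tree vertices correspond bijectively to matrices. The key claim is that this map is a bijection from $SL_2(\NN)$ onto the pairs of coprime positive integers $(u,v)$, up to a few boundary cases. This is the Stern--Brocot/Calkin--Wilf property: every coprime pair arises from exactly one positive matrix, namely by reading the Euclidean algorithm of $(u,v)$ backwards. Boundary pairs such as $u=1$ or $v=1$ need care, since they can correspond to the identity or be attained twice; each such discrepancy changes the count by at most $1$. Granting the claim,
\[
\#\{\alpha\in\Gamma\mid \ell(\alpha)=L\}=\#\{(u,v)\ \text{coprime}: Pu+Qv=L-4K+c\},
\]
and shifting the constant $c$ into $K$ gives $\varphi_{P,Q}(L-4K)$. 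The boundary solutions with $u=1$ or $v=1$ exist only when $L-4K\equiv P$ or $Q$ modulo $\mathrm{lcm}$-type residues, and they are automatically coprime. Their presence is therefore an explicit periodic function of $L$ with period dividing $\mathrm{lcm}(P,Q)$ and values in $\{-2,\dots,2\}$; this is $C(L)$. The threshold $L_0$ excludes the root and small exceptional lengths.

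\textbf{Main obstacle.} I expect the hardest step to be proving that the length formula is exactly linear, with a correction $c$ independent of $M$ on the whole subtree. This requires a careful case analysis of the cancellations between $\mathtt{z}^{\pm1}=\mathtt{aBAb}$ or $\mathtt{bABa}$ and the boundary letters of $M(w_i)$, including the cyclic junction. It also has to use that the first and last letters of $M(w)$ stabilise after one step, since $L$ and $R$ fix $\mathtt{a}$-initial and $\mathtt{b}$-final letters respectively; maximality of $\Gamma$ is what guarantees this. I would first verify the claim on single letters $w_i\in\{\mathtt{a},\mathtt{b}\}$ and then extend it by concatenation.
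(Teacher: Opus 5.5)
Your overall route is the paper's. Monotone blocks grow linearly under $L$ and $R$, cancellations against $\mathtt{z}^{\pm1}$ are governed by first and last letters, and the count reduces to coprime solutions of a linear Diophantine equation. Making the vertex--pair correspondence explicit through $M\mapsto(1,1)M$ and the Calkin--Wilf tree is a real clarification of what the paper leaves implicit.

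However, you put the correction $C(L)$ in the wrong place. The map $M\mapsto(1,1)M$ is an \emph{exact} bijection from $\langle L,R\rangle_+$ onto the coprime pairs in $\ZZ_{\ge1}^2$, with $I\mapsto(1,1)$, $L^n\mapsto(1,n+1)$ and $R^n\mapsto(n+1,1)$; nothing is missed or hit twice. What actually fails is your claim that the cancellation $c$ is constant on $\Gamma$ once the first step below the root is fixed, and maximality plays no role here. The boundary letters of $M(w_i)$ stabilise only once $M$ contains both an $L$ and an $R$. For a positive block, the first letter becomes $\mathtt{a}$ after the first $L$, and the last letter becomes $\mathtt{b}$ after the first $R$. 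So, by Lemma~\ref{lem:cttcanc}, there are three constants: $c_{int}$ on vertices with $u,v\ge2$, $c_L$ on the ray $\langle L\rangle_+\beta$, and $c_R$ on the ray $\langle R\rangle_+\beta$.

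These constants can differ. If a positive block starting with $\mathtt{b}$ follows a $\mathtt{Z}$, that junction cancels one pair everywhere except along $\langle R\rangle_+\beta$. So your displayed identity with a single $c$ is false. Taken literally, your argument yields no periodic term at all, whereas several subtrees in Example~\ref{ex:counting} carry one. The correct count starts from $\varphi_{P,Q}(L-4E+2c_{int})$, where $E=\sum|e_i|$. The ray solutions $(1,v)$ and $(u,1)$ are then removed at shift $c_{int}$ and reinserted at shifts $c_L$ and $c_R$. These four indicator terms are periodic in $L$ (with periods $P$ or $Q$), which is exactly what produces $C(L)\in\{-2,\dots,2\}$.

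Two further steps are missing.

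First, ``shifting $c$ into $K$'' gives the form $\varphi_{P,Q}(L-4K)$ with $K\in\ZZ_{\ge0}$ only if $4E-2c_{int}$ is a non-negative multiple of $4$, i.e.\ $c_{int}$ is even and at most $2E$. This is not automatic; the paper supplies it in Lemma~\ref{lem:ceven}. In the interior, a block between $\mathtt{z}$ and $\mathtt{z}$ (or between $\mathtt{Z}$ and $\mathtt{Z}$) cancels $0$ or $2$ pairs according to its sign. A block between $\mathtt{z}$ and $\mathtt{Z}$, in either order, cancels exactly one pair. The latter blocks are the sign changes of the cyclic sequence $(e_i)$, hence even in number. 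Since each of the at most $E$ blocks contributes at most $2$, this also gives $c_{int}\le 2E$.

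Second, you tacitly assume $P,Q\ge1$. If every block is a power of $\mathtt{a}$ (or of $\mathtt{b}$), the root is fixed by $L$ (or $R$). Then every curve of $\Gamma$ labels infinitely many vertices, and the length equation has infinitely many coprime solutions. The paper handles this by noting that, in standard position, it only occurs at the roots of $T_\gamma$ and $T_{S(\gamma)}$, and by re-rooting at $R\beta$ (respectively $L\beta$).
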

	
	Proposition \ref{thm:twotrees} and  Theorem \ref{thm:mainthm_subtree} show that the counting strategy is rooted to the decomposition of the positive suborbit trees $T_\gamma$ of an essential curve $\gamma\in\mathcal{C}(T)$ into stable regions where length growth is linear, with finite complement bounded in size. We will finally present a topological interpretation of these maximal stable subtrees.\\
	
	We call a \emph{multicurve} a multiset of curves, and denote the set of all multicurves on $T$ by $\mathcal{M}(T)$. We refer as \emph{intersection number} to the geometric intersection number. We denote by $i(\cdot,\cdot)$ the intersection number between two curves and by $i(\cdot)$ the self-intersection number of a curve, it being half of the intersection number with itself. An \emph{intersection point} is a point on $T$ where two (resp. one) representative loops in minimal position attain the intersection (resp. self-intersection). The self-intersections of a multicurve are the union of the pairwise intersections between its components and the self-intersections of each of them. A simple multicurve is a multicurve with zero self-intersections. Given a multicurve, we can define at a representative's self-intersection point two resolutions of the intersection as local surgeries that decrease the intersection number (see e.g. \cite{Turaev1991}). This resolution can be extended from a representative to a free homotopy class, and from a free homotopy class to the entire $\mathrm{Mod}(T)$-orbit by equivariancy. See Section \ref{sec:resolutions} for a formal explanation of this. 

    \begin{figure}[h!]\centering
		\begin{overpic}[width=.75\linewidth]{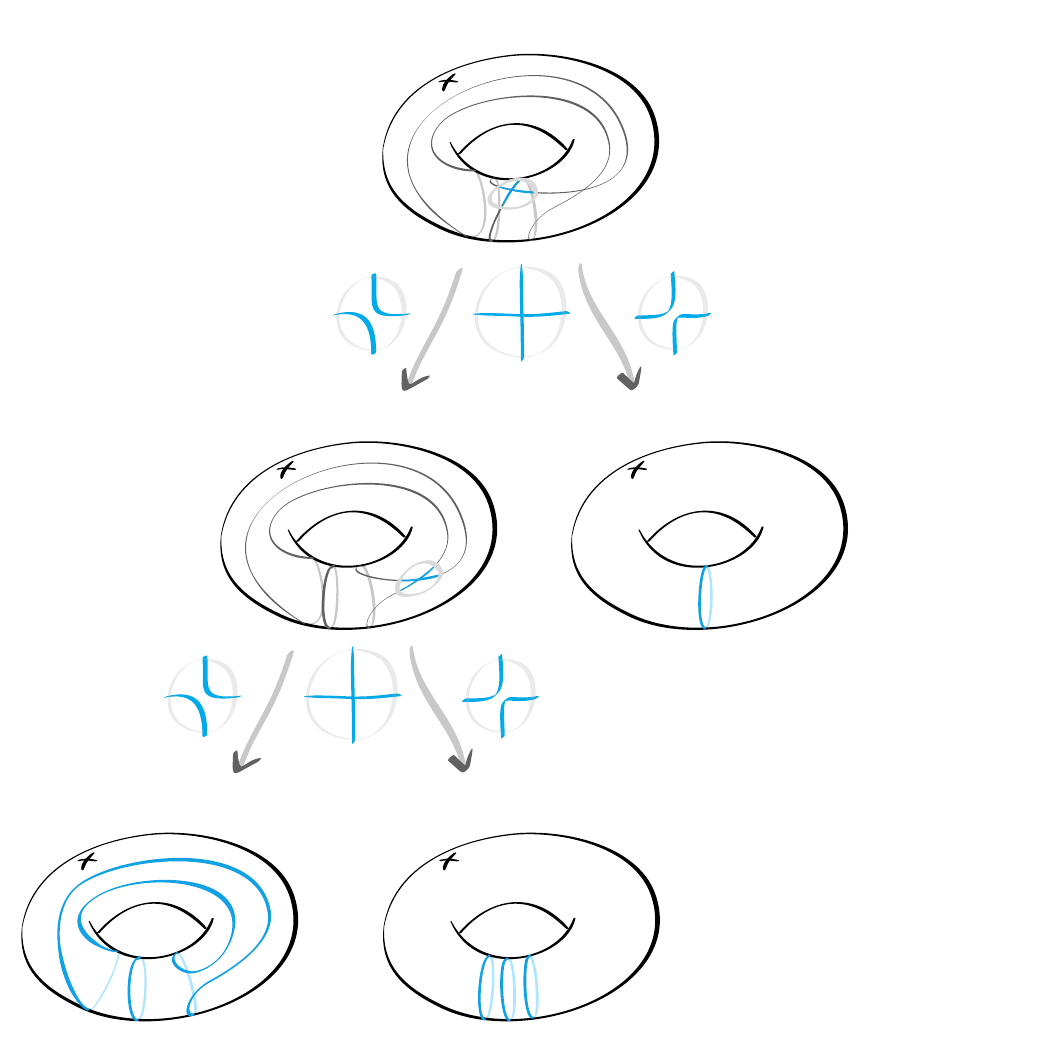}
			\put(44,96){$\llbracket \mathtt{aabaB}\rrbracket$}
			\put(25.5,58.5){$\{\llbracket \mathtt{a}\rrbracket,\llbracket \mathtt{abaB}\rrbracket\}$}
			\put(64,58.5){$\{\llbracket \mathtt{a}\rrbracket\}$}
			\put(7,21){$\{\llbracket \mathtt{a}\rrbracket,\llbracket \mathtt{abAB}\rrbracket\}$}
			\put(40,21){$\{\llbracket \mathtt{a}\rrbracket,\llbracket \mathtt{a}\rrbracket,\llbracket \mathtt{a}\rrbracket\}$}
		\end{overpic}
		\caption{A list of full resolutions of $\llbracket \mathtt{aabaB}\rrbracket$.}
		\label{fig:resolutionlist}
	\end{figure}
	
	The following resolution procedure is exemplified in Figure \ref{fig:resolutionlist}. Starting at an essential curve $\gamma\in\mathcal{C}(T)$, we choose a self-intersection point and resolve it in the two ways, giving rise to two multicurves. We do this iteratively until we get at most $2^{i(\gamma)}$ resolutions to simple multicurves. We call a \emph{complete list of full resolutions} of $\gamma$ a finite set $\mathcal{R}$ obtained in this way in which, for any $r\in \mathcal{R}$, $r:\mathrm{Mod}(T)\cdot \gamma \to \mathrm{Mod}(T)\cdot r(\gamma)\subseteq \mathcal{M}(T)$ is defined equivariantly---i.e. $r(f(\gamma))=f(r(\gamma))$ for any $f\in\mathrm{Mod}(T)$. Note that such a list might depend on the choices of which intersections to resolve at each step.

    In Figure \ref{fig:resolutionlist}, the full resolution list $\mathcal{R}=\{r_1,r_2,r_3:\mathrm{Mod}(T)\cdot \gamma\to \mathcal{M}(T)\}$ found for $\gamma=\llbracket \mathtt{aabaB}\rrbracket$ is the following: for all $f\in\mathrm{Mod}(T)$, $r_1(f(\gamma))=f(\{\llbracket \mathtt{a}\rrbracket,\llbracket \mathtt{abAB}\rrbracket\})$, $r_2(f(\gamma))=f(\{\llbracket \mathtt{a}\rrbracket,\llbracket \mathtt{a}\rrbracket,\llbracket \mathtt{a}\rrbracket\})$, and $r_3(f(\gamma))=f(\{\llbracket \mathtt{a}\rrbracket\})$.

	\begin{restatable}{theorem}{resolutions}\label{thm:resolutions}
		Let $\gamma\in\mathcal{C}(T)$ be an essential curve. Let $\mathcal{R}$ be a list of full resolutions of $\gamma$. Each maximal stable subtree of $T_\gamma$ is a maximal connected, forward-complete subtree such that a resolution $r\in\mathcal{R}$ is length-preserving at its interior. 
	\end{restatable}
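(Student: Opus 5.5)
The argument compares, along positive $L$/$R$ moves, the length of $\alpha$ with the total length of the multicurve $r(\alpha)$, and shows that the set where they grow identically is exactly the stable region. I will use the stable normal form $\prod_{i=1}^n \mathtt{z}^{e_i}w_i$. Since $L$ and $R$ fix $[\mathtt{z}]$ and preserve monotonicity, on a stable curve both act only on the monotone blocks $w_i$. No cancellation occurs between a $\mathtt{z}^{\pm1}$ and a positive (or negative) monotone block, so the word length is additive,
\[
\ell\Big(\prod_i \mathtt{z}^{e_i}w_i\Big)=4\sum_i|e_i|+\sum_i\ell(w_i).
\]
Therefore, on a stable subtree, $\ell$ grows by exactly the abelianised amount $\sum_i\ell(w_i')-\ell(w_i)$ at each edge. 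This is the linear growth behind Theorem~\ref{thm:mainthm_subtree}, with $K=\sum|e_i|$.

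\textbf{Step 1: identify the full resolutions on a stable curve.} I will resolve the self-intersections of a stable representative in a fixed order.
\begin{itemize}
\item First resolve the intersections internal to each $\mathtt{z}^{e_i}$ and those between the $\mathtt{z}$-blocks and the $w_i$.
\item Then resolve those among the monotone blocks.
\end{itemize}
Equivariance of $r$ ($r(f\gamma)=f(r\gamma)$) means I only need to understand $r$ at the root of a maximal stable subtree $\Gamma$ and then push it forward.

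Among the resolutions in $\mathcal{R}$, I will single out one $r$ (up to the choice inherent in $\mathcal{R}$). Its output consists of simple curves built from the monotone blocks, i.e.\ copies of a simple curve $\llbracket \mathtt{a}^p\mathtt{b}^q\rrbracket$ (monotone after $L$/$R$), together with boundary-parallel or commutator pieces accounting for $\mathtt{z}^{e_i}$. The length of each such piece is invariant under $L$ and $R$ (the commutator is fixed).

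\textbf{Step 2: show $r$ is length-preserving inside $\Gamma$.} I will check that for each edge $\alpha\to X\alpha$ in $\Gamma$, with $X\in\{L,R\}$,
\[
\ell(X\alpha)-\ell(\alpha)=\ell(r(X\alpha))-\ell(r(\alpha)).
\]
Using the additivity formula above and the fact that lengths of simple monotone curves add under $L$/$R$ exactly as the abelianisation does, both sides equal the increase in total monotone length.

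\textbf{Step 3: converse (maximality).} I need that if the tree is extended past the root of $\Gamma$ (to its parent), or if a different connected forward-complete region is taken, then no $r\in\mathcal{R}$ is length-preserving there. The key point is that at an unstable vertex the word $X\alpha$ has cancellation between a monotone block and a mixed-sign segment. Then $\ell(X\alpha)-\ell(\alpha)$ is strictly smaller than the abelianised increase. Meanwhile each full resolution is a simple multicurve, whose length increments are exactly abelianised, since simple curves are monotone up to cyclic conjugation. So the identity fails on that edge for every $r$. Forward-completeness and connectivity then force the maximal such region to coincide with the maximal stable subtree given by Corollary~\ref{cor:maxdisjoint}.

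\textbf{Main obstacle.} The main obstacle is Step 3, together with the precise interpretation of ``length-preserving at its interior'' (edge increments versus literal equality, since resolving $\mathtt{z}^{e}$ changes length by a constant). I will make the definition concrete as equality of length increments along edges. Then I will prove a lemma: for a simple multicurve $\mu$ and $X\in\{L,R\}$, the increment $\ell(X\mu)-\ell(\mu)$ equals the abelianised increment. For a non-stable $\alpha$, I will show that the increment is strictly smaller by exhibiting an explicit cancellation $\mathtt{a}\cdots\mathtt{A}$ or $\mathtt{b}\cdots\mathtt{B}$ created by $X$ in a non-monotone, non-commutator segment.
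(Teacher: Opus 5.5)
There is a genuine gap in Step 1, and that step carries the existence half of the theorem. The list $\mathcal{R}$ is arbitrary: which crossing gets smoothed at each stage was fixed when $\mathcal{R}$ was built. So you cannot resolve ``in a fixed order'' of your choosing. Your claim that you can ``single out one $r$'' whose output is copies of a simple curve plus peripheral curves is asserted, not derived. (Incidentally, $\llbracket\mathtt{a}^p\mathtt{b}^q\rrbracket$ is not simple once $p,q\ge 2$.) The missing idea is the paper's pigeonhole argument:
\begin{itemize}
\item By Lemma~\ref{lem:max}, at every crossing one of the two smoothings keeps the total length. Following those choices, every vertex $\alpha$ of $\Gamma$ has some $r\in\mathcal{R}$ with $\ell(r(\alpha))=\ell(\alpha)$.
\item Since $\mathcal{R}$ is finite, a single $r$ does this on infinitely many vertices. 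In fact this holds on a set not contained in the zero locus of a nonzero affine function of $M$, since finitely many such loci cannot cover the interior.
\item By equivariance, $\ell(r(M\beta))=\ell(M\,r(\beta))$ follows a fixed affine growth law in $M$, and so does $\ell(M\beta)$ on the interior. Agreement on such a set forces the two laws to coincide.
\end{itemize}
This both pins down $r(\beta)$ (conditions (1)--(4) in the paper's proof) and gives length preservation on the whole interior.

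Your length bookkeeping is also wrong, and that is what led you to redefine ``length-preserving''. The formula $\ell(\prod\mathtt{z}^{e_i}w_i)=4\sum|e_i|+\sum\ell(w_i)$ fails. The letter $\mathtt{z}=\mathtt{aBAb}$ cancels against a negative block that begins with $\mathtt{B}$ or ends with $\mathtt{A}$, and $\mathtt{Z}$ cancels against positive blocks. For example:
\begin{itemize}
\item $\llbracket\mathtt{zAB}\rrbracket$ (stable) has length $6$;
\item $L\llbracket\mathtt{zAB}\rrbracket=\llbracket\mathtt{zABA}\rrbracket$ has length $5$;
\item $RL\llbracket\mathtt{zAB}\rrbracket=\llbracket\mathtt{zBABBA}\rrbracket$ again has length $5$.
\end{itemize}
So on edges leaving the root or the boundary rays, the increments are not the abelianised ones. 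This is exactly Lemma~\ref{lem:cttcanc}: the number of cancelling pairs is constant only on the interior and on each ray separately. That is why the statement says ``interior''. It is also why the correct resolution has $\sum|e_i|-c_{int}/2$ peripheral components (an integer by Lemma~\ref{lem:ceven}), rather than your $\sum|e_i|$.

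With the correct count you get the literal equality $\ell(r(\alpha))=\ell(\alpha)$, which is the meaning the paper uses via Lemma~\ref{lem:max}. Your reinterpretation as equality of increments is unnecessary and proves a weaker statement.

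Step~3 rests on two false claims:
\begin{itemize}
\item Simple curves are not monotone up to cyclic conjugation. For instance $\llbracket\mathtt{aB}\rrbracket$ is simple, yet $L\llbracket\mathtt{aB}\rrbracket=\llbracket\mathtt{B}\rrbracket$ is shorter, so ``increments of a full resolution are abelianised'' fails for a general $r\in\mathcal{R}$.
\item ``Increment smaller than abelianised'' does not separate unstable from stable vertices, because, as shown above, stable vertices can have non-abelianised increments too.
\end{itemize}
Maximality has to be argued against the specific growth law that the interior of $\Gamma$ forces on $r$. You need to show that this law stops matching the curves' lengths once the subtree is enlarged to start at the parent of the root.
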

	
	Theorem \ref{thm:resolutions} allows us to describe the coefficients of the counting formula in terms of the predominant resolution's image at the root of the stable subtrees---see Corollary \ref{cor:coeff}.\\
	
	The last section of this work, Section \ref{sec:algorithm}, will present an algorithm for words that generates the formula of Theorem \ref{thm:mainthm} in exponential computational time with respect to the word length. \\

	\subsection*{Related work}
	
	Counting curves on a surface of a given type and bounded length was first studied by McShane and Rivin in \cite{McSR95} for simple curves on hyperbolic once-punctured tori and by Mirzakhani for general hyperbolic orientable finite-type surfaces in \cite{Mirzakhani:first,Mirzakhani:last}. Denote by $\Sigma_{g,n}$ a topological surface of genus $g$ and $n$ punctures. With many generalizations, the one we are interested in is Erlandsson, Parlier and Souto's result \cite{ErlandssonParlierSouto20} (see also a more extensive work in \cite{ErlandssonSouto}). They generalize Mirzakhani's result to any positive, continuous and homogeneous function $\tilde{\ell}$ on the space of geodesic currents of the surface, as a generalization of hyperbolic lengths on the curves on the surface. They prove that for a given mapping class group orbit of curves $\mathrm{Mod}(\Sigma_{g,n})\cdot\gamma$ there exist constants $C_{g,n;\gamma}$ depending only on $g,n$ and $\gamma\in\mathcal{C}(\Sigma_{g,n})$, and $B_{\tilde\ell}$ depending only on the function $\tilde\ell$, such that
	\[
	| \{ \alpha \in \mathcal{C}(\Sigma_{g,n}) \mid \alpha \in\mathrm{Mod}(\Sigma_{g,n})\cdot \gamma , \, \tilde\ell(\alpha) \leq L \}|
	\sim
	C_{g,n;\gamma} \cdot B_{\tilde\ell} \cdot L^{6g-6+2n},
	\]
	
	where $\mathcal{C}(\Sigma_{g,n})$ is the set of closed curves up to free homotopy of the surface. In particular, this theorem applies to the word length $\ell$, i.e. the function that measures the number of letters of a fixed generating set of the fundamental group needed to represent a curve (see e.g. \cite{Erlandsson}). This length allows to study the combinatorial structure of curves on a surface and compare their length and intersection, all while being quasi-isometric to hyperbolic length for compact surfaces. 
	
	Recall that we denote by $T$ the once-punctured torus and by $\ell$ the word length with respect to a fixed set of generators of the fundamental group. By \cite{ErlandssonParlierSouto20}, we know that for every curve type $\mathrm{Mod}(T)\cdot \gamma$, with $\gamma\in\mathcal{C}(T)$, there exists a positive constant $C_\gamma$ such that 
	\[
	| \{ \alpha \in \mathcal{C}(T) \mid \alpha \in\mathrm{Mod}(T)\cdot \gamma , \, \ell(\alpha) \leq L \}|
	\sim
	C_{\gamma}  \cdot L^{2}.
	\]
	In Corollary \ref{thm:asym}, we find $C_\gamma$ in terms of the finite number of coefficients found by the algorithm in Section \ref{sec:algorithm}.
	The word length gives one hope that is not achievable with hyperbolic lengths: finding a closed expression for the counting instead of the asymptotic growth. For the low-intersection types, the combinatorics of the words generating them are being studied. See for example \cite{BuserSemmler,FisacLiu}, for a combinatorial description of all words with self-intersection $0$ and $1$, respectively. See also \cite{CP10,Cha15,CML12,CL12,Chas} for examples of how to compare word length and intersection on the once-punctured torus or a pair of pants and computational results about the number of curves of a given type and length, and see also \cite{FisacLiu} for how to derive the counting of curves from the combinatorial information in the intersection $0$ and $1$ cases.
	
	Before this work, the existence, shape and computation of such an explicit counting function was only known for specific low-intersection cases. The main result, Theorem \ref{thm:mainthm} settles a conjecture posed by Chas in \cite{Chas}, generalizing what empirical evidence hints for low-intersection types.
	
	\subsection*{Statement on AI}
	Throughout this project LLMs have been used extensively as a formula-guessing software and as a coder for computations and verifications. All the writing, mathematical arguments and proofs have been made independently.
	
	\subsection*{Acknowledgements}
	FB is supported by the Simons Foundation. DF is funded by the ANR grant GALS (ANR-23-CE40-0001). The authors would like to thank Juan Souto for interesting yet confusing conversations.
	
	\section{Word stability}
	
	Recall that a curve is stable if one of its representatives in $\pi_1(T)$ can be written as $\prod_{i=1}^n \mathtt{z}^{e_i}w_i$, with $\mathtt{z}=[\mathtt{a},\mathtt{B}]$, all $e_i\neq0$ and all $w_i$ non-empty monotone words in $\{\mathtt{a},\mathtt{b},\mathtt{A},\mathtt{B}\}$. We call the $\pi_1(T)$ element a \emph{stable element}, and its representative in stable form a \emph{stable word}. Moreover, we will denote by $|w|_S$ the number of occurrences of the letters $S\subseteq \{\mathtt{a},\mathtt{b},\mathtt{A},\mathtt{B}\}$ at the word $w$, and, for a simple curve $\alpha\in\mathcal{C}(T)$, by $|\alpha|_\mathtt{x}$ the number of occurrences of the letters $\{\mathtt{x},\mathtt{X}\}$ in a cyclically reduced representative.
	
	\begin{lemma}\label{lem:stable}
		If $w$ is a stable word, then $[L(w)],[R(w)]$ are stable elements.
	\end{lemma}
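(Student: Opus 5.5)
We will show the claim by direct computation on a stable word $w=\prod_{i=1}^n \mathtt{z}^{e_i}w_i$, using that $L$ and $R$ are automorphisms of the free group $\pi_1(T)$ defined on generators. Since they are homomorphisms, the image is
\[
[L(w)]=\prod_{i=1}^n [L(\mathtt{z})]^{e_i}[L(w_i)],
\]
and similarly for $R$. The proof then splits into two facts: the commutator class is fixed, and monotone words go to monotone words.

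\textbf{Step 1: the commutator.} We compute $L(\mathtt{aBAb})=\mathtt{a}\cdot\mathtt{BA}\cdot\mathtt{A}\cdot\mathtt{ab}$, which freely reduces to $\mathtt{aBAb}$. For $R$ we get $R(\mathtt{aBAb})=\mathtt{ab}\cdot\mathtt{B}\cdot\mathtt{BA}\cdot\mathtt{b}=\mathtt{aBAb}$. So $[L(\mathtt{z})]=[R(\mathtt{z})]=[\mathtt{z}]$ holds exactly in $\pi_1(T)$, not merely up to conjugation, and hence the same holds for $\mathtt{Z}$. These are exactly the invariances asserted in the introduction, and I will verify them explicitly by these one-line reductions.

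\textbf{Step 2: monotonicity.} Let $u$ be a non-empty monotone word, say with all letters in $\{\mathtt{a},\mathtt{b}\}$. Then $L(u)$ and $R(u)$ are concatenations of the words $\mathtt{a},\mathtt{ab}$ (respectively $\mathtt{ab},\mathtt{b}$). These are positive words, so no cancellation occurs, and the result is a non-empty positive word. If $u$ is negative, then $u^{-1}$ is positive, and $L(u)=L(u^{-1})^{-1}$ is negative and non-empty. Thus each $L(w_i)$ and $R(w_i)$ is a non-empty monotone word.

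\textbf{Step 3: conclusion.} We set $w_i':=L(w_i)$ (after free reduction, which does nothing by Step 2). Then
\[
[L(w)]=\Bigl[\prod_{i=1}^n \mathtt{z}^{e_i}w_i'\Bigr],
\]
with the same exponents $e_i$, still nonzero when $n>1$. This expression is of stable form, so $[L(w)]$ is a stable element, and likewise $[R(w)]$.

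The only point needing care is that stability is defined via a representative expression in $\pi_1(T)$ rather than a reduced word. Cancellations between $\mathtt{z}^{e_i}$ and the neighbouring $w_i'$ in the concatenated word therefore do not matter, because the definition asks only for the group element to admit such a factorization. There is no real obstacle; the main check is the computation in Step 1.
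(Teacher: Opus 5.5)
Your proof is correct and takes the same approach as the paper. $L$ and $R$ fix $[\mathtt{z}]$ and $[\mathtt{Z}]$ exactly in $\pi_1(T)$ and send non-empty monotone words to non-empty monotone words, so the stable factorization carries over with the same exponents $e_i$; your explicit reductions of $L(\mathtt{z})$ and $R(\mathtt{z})$ just spell out what the paper asserts without computation.
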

	
	\begin{proof}
		Note that $[\mathtt{z}]=L([\mathtt{z}])=R([\mathtt{z}])$ and $[\mathtt{Z}]=L([\mathtt{Z}])=R([\mathtt{Z}])$. Moreover, if $w$ is monotone, then so are $L(w)$ and $R(w)$.
	\end{proof}
	
	This allows us to prove the next proposition. We will use train-tracks and pre-train-tracks during this article. These two objects are usually defined on a smooth structure (see e.g. \cite{ErlandssonSouto}). We will now give an equivalent purely topological definition. We will call \emph{pre-train-track} a connected embedded finite graph $\tau$ in $T$ equipped with a labelling on the set of half-edges satisfying some conditions. We call its vertices $V(\tau)$ \emph{switches}, and its edges $E(\tau)$ \emph{branches}. Denote for a switch $v\in V(\tau)$ the set of incident half-edges by $H(v)$. Consider a partition of each set $H(v)$ into $\mathrm{In}(v)\sqcup \mathrm{Out}(v)$. This embedded graph $\tau$ equipped with the partition at every set $\{H(v)=\mathrm{In}(v)\sqcup \mathrm{Out(v)},v\in V(\tau)\}$ is called a \emph{pre-train-track} if it satisfies: 
	\begin{enumerate}
		\item For each $v\in V(\tau)$, $\mathrm{In}(v)$ and $\mathrm{Out}(v)$ are non-empty;
		\item For each $v\in V(\tau)$, the elements of $\mathrm{In}(v)$ (resp. of $\mathrm{Out}(v)$) are consecutive with respect to the cyclic ordering of $H(v)$ induced by the embedding of $\tau$.
	\end{enumerate}
	
	Moreover, consider now the closures of the connected components of $T\setminus \tau$, where if a vertex is attained between two incoming or two outgoing half-edges, it is called a cusp of the complementary region. Then, $\tau$ is called a \emph{train-track} if none of its complementary regions are disks with at most 2 cusps, or punctured disks with no cusp. 
	
	On a path on a pre-train-track $\tau$, a \emph{legal} turn is a passing through a vertex from an incoming half-edge to an outgoing half-edge, or the other way around. A turn is \emph{illegal} otherwise. As usual, we say that a curve $\gamma\in\mathcal{C}(T)$ is \emph{carried} by a pre-train-track $\tau$ if it has a representative loop contained all in $\tau$ with no illegal turns. Note that the train-track condition ensures that there is at most a unique way to push a curve onto $\tau$ with no backtracking.

    \finitestable*
	
	\begin{proof}
		During this proof we will use a pre-train-track as visual aid. Note that all the arguments can be written in terms of combinatorics of words. A curve is stable if and only if it is carried by the pre-train-track $\tau$ from Figure \ref{fig:pretrain}.
		
		\begin{figure}[h!]\centering
			\begin{overpic}[width=.6\linewidth]{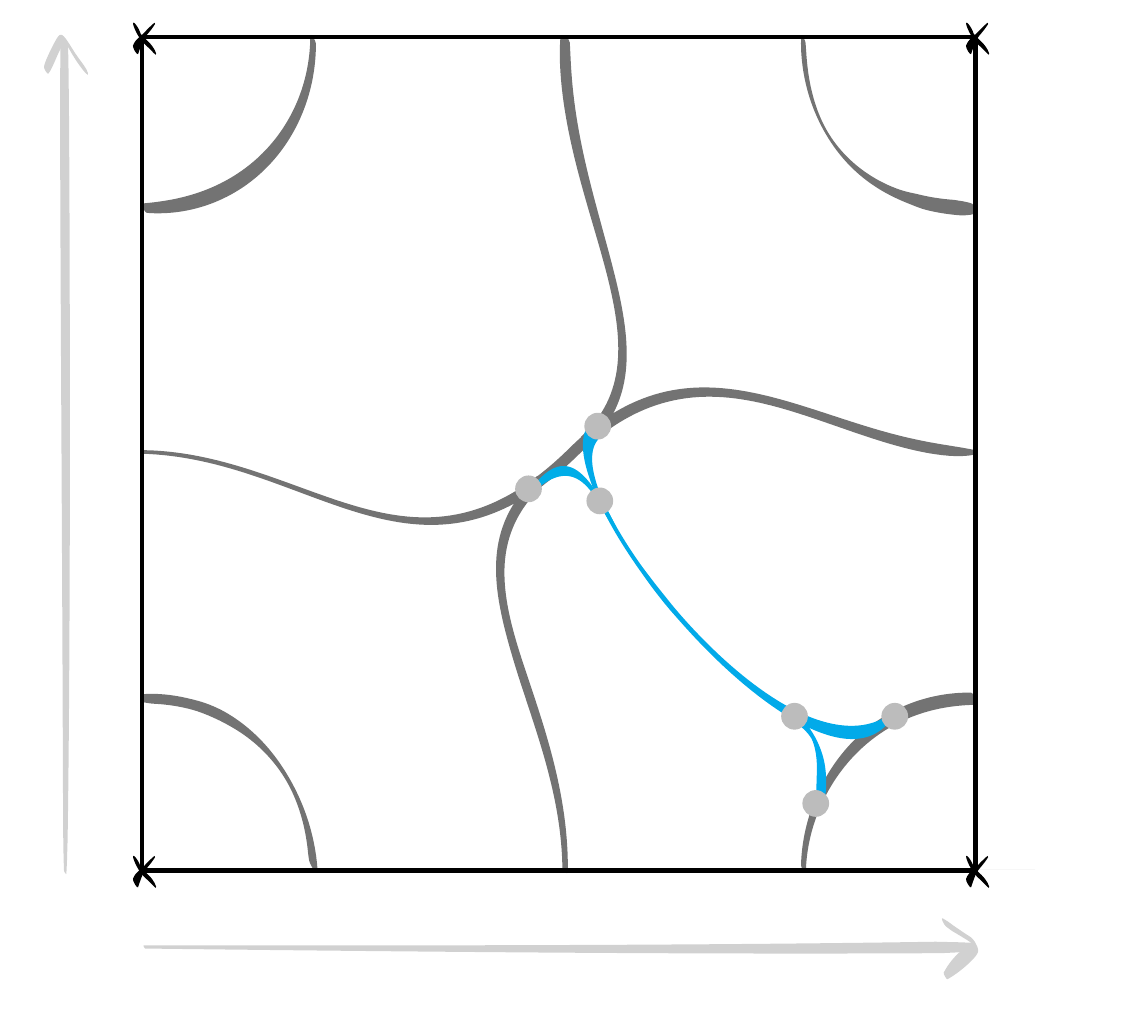}
				\put(1,50){$\mathtt{a}$}	
				\put(50,1){$\mathtt{b}$}
				\put(42,25){$f_\mathtt{a}$}
				\put(22,43){$f_\mathtt{b}$}
				\put(61,37){\color{cyan}$f_c$}
			\end{overpic}
			\caption{The pre-train-track defining admissibility.}
			\label{fig:pretrain}
		\end{figure}
		
		Define an \emph{admissible push} of a curve $\gamma$ as a free homotopy that sends $\gamma$ onto a loop on $\tau$ with no backtracking such that, fixing any orientation of $\gamma$, when considering the sub-paths on $\tau\setminus \{f_c\}$, every connected component passes through $f_\mathtt{a}$ (resp. $f_\mathtt{b}$) with one orientation, and such that all illegal turns are made between $f_\mathtt{a}$ and $f_\mathtt{b}$.

        Hence, an admissible push of a curve $\gamma$ corresponds to writing it as a word $\prod_{i=1}^n \mathtt{z}^{e_i}w_i$ with $e_i\neq0$ and such that each $w_i$ is non-empty and has all $\mathtt{a}$'s powers of the same sign, and all $\mathtt{b}$'s powers of the same sign. We call such a word to be in \emph{admissible form} or just an \emph{admissible word}. A subword $w_i$ of an admissible word is called \emph{monotone} if the signs of the $\mathtt{a}$'s and of the $\mathtt{b}$'s are the same, and \emph{mixed} otherwise.

        The existence of such an admissible push is guaranteed combinatorially by some word transformations acting trivially in $\mathcal{C}(T)$.

        \begin{lemma}\label{lem:admissible}
            Every cyclically reduced cyclic word in $\{\mathtt{a},\mathtt{b},\mathtt{A},\mathtt{B}\}$ can be rewritten in admissible form by finitely many of the ($\mathcal{C}(T)$-trivial) rewriting movements (\ref{eq:leftrewr})-(\ref{eq:rightrewr}) below and giving preference to the (\ref{eq:leftrewr}) ones over the (\ref{eq:rightrewr}) ones. Moreover, no $\mathtt{zZ}$ or $\mathtt{Zz}$ subword will appear in the process.
        \end{lemma}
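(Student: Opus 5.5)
The plan is an induction on a complexity that strictly decreases under each rewriting movement. Write the cyclically reduced cyclic word in syllable form, i.e.\ as a cyclic product of maximal powers $\mathtt{a}^{\pm k}$ and $\mathtt{b}^{\pm k}$, and mark as \emph{frozen} any occurrences of $\mathtt{z}$ or $\mathtt{Z}$ that have already been produced. The complexity $c(w)$ will be the number of \emph{sign changes} of the $\mathtt{a}$-syllables plus the number of sign changes of the $\mathtt{b}$-syllables, read cyclically within each unfrozen stretch between two frozen blocks. A word is in admissible form exactly when every unfrozen stretch has no $\mathtt{a}$-sign change and no $\mathtt{b}$-sign change, that is, when $c(w)=0$, and every stretch is non-empty.

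The key algebraic input is that a sign change can always be absorbed into a commutator. From $\mathtt{z}=\mathtt{aBAb}$ one gets $\mathtt{aBA}=\mathtt{zb}^{-1}\cdot$ (suitably conjugated), and more generally
\[
\mathtt{a}\,\mathtt{b}^{\mp j}\,\mathtt{A}=(\mathtt{a}\mathtt{b}^{\mp1}\mathtt{A})^{j},
\]
with the analogous identities for $\mathtt{A}\mathtt{b}^{\pm j}\mathtt{a}$, $\mathtt{b}\mathtt{a}^{\pm j}\mathtt{B}$ and $\mathtt{B}\mathtt{a}^{\pm j}\mathtt{b}$. Each of these patterns can therefore be rewritten as $\mathtt{z}^{\pm1}$ times a shorter monotone remainder. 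I expect the movements (\ref{eq:leftrewr}) to be precisely these local replacements, which peel off a $\mathtt{z}^{\pm1}$ on the left of a sign change. The movements (\ref{eq:rightrewr}) should be their mirror images, which peel it off on the right. All of them are equalities in $\pi_1(T)$, possibly up to a cyclic rotation, so they act trivially on $\mathcal{C}(T)$. The main check is that every application removes at least one sign change among the unfrozen syllables and creates none. This holds because the remainder keeps the sign of the neighbouring syllable and the new frozen letter separates the stretch. Hence $c$ strictly decreases, and the process terminates after at most $c(w_0)\le|w_0|$ steps.

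It remains to show that a terminal word ($c=0$) can be grouped as $\prod\mathtt{z}^{e_i}w_i$ with each $w_i$ non-empty and with $e_i\neq0$. Adjacent frozen letters of the same sign combine into a power $\mathtt{z}^{e_i}$. So the only obstruction is a juxtaposition $\mathtt{zZ}$ or $\mathtt{Zz}$, which would cancel and destroy the form; an empty $w_i$ arises exactly when such a pair is forced.

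I expect the second assertion, that no $\mathtt{zZ}$ or $\mathtt{Zz}$ subword appears, to be the main obstacle. This is where the preference for (\ref{eq:leftrewr}) over (\ref{eq:rightrewr}) is used. The plan is to show the invariant that every frozen $\mathtt{z}^{\pm1}$ produced by a left movement is immediately followed by a syllable whose sign is determined by the sign of the commutator. Concretely, $\mathtt{z}$ should be followed by a $\mathtt{b}$-syllable of positive sign and $\mathtt{Z}$ by one of negative sign, in the orientation induced by the movement. An opposite-sign commutator cannot then be created directly to its right by a left movement. A right movement is applied only when no left movement is available, and I will check case by case, using cyclic reducedness of the input, that this situation excludes an adjacent frozen letter of opposite sign. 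The case analysis is over the finitely many patterns of two consecutive syllables adjacent to a frozen letter, so it is routine once this invariant is stated.
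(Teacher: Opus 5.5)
Your termination half is the paper's argument. Counting sign changes between consecutive $\mathtt{a}$- (resp.\ $\mathtt{b}$-) syllables inside unfrozen stretches is exactly the paper's $\eta$, the number of subwords $\mathtt{ab}^k\mathtt{A}$, $\mathtt{Ab}^k\mathtt{a}$, $\mathtt{ba}^k\mathtt{B}$, $\mathtt{Ba}^k\mathtt{b}$ with $k\neq0$.

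Your guess of the moves is off, however. The moves (\ref{eq:leftrewr}) are $\mathtt{aB}^k\mathtt{A}\to(\mathtt{zB})^k$, $\mathtt{ab}^k\mathtt{A}\to(\mathtt{bZ})^k$, $\mathtt{Ba}^k\mathtt{b}\to(\mathtt{Za})^k$ and $\mathtt{BA}^k\mathtt{b}\to(\mathtt{Az})^k$. They insert $k$ commutators and may need a free cancellation with a neighbouring letter, e.g.\ $\mathtt{aB}^k\mathtt{A}\,\mathtt{b}\to(\mathtt{zB})^k\mathtt{b}=(\mathtt{zB})^{k-1}\mathtt{z}$. The moves (\ref{eq:rightrewr}) are $\mathtt{Ab}^k\mathtt{a}\to\mathtt{bAzb}^{k-1}\mathtt{a}$ and its three analogues.

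The genuine gap is in the second assertion. Your invariant fails for these moves: $\mathtt{z}$ is followed by $\mathtt{B}$ in $(\mathtt{zB})^k$ and by $\mathtt{A}$ in $(\mathtt{Az})^k$, and $\mathtt{Z}$ is followed by $\mathtt{b}$ or $\mathtt{a}$. More seriously, a check made when a (\ref{eq:rightrewr}) move is applied looks in the wrong place. A (\ref{eq:leftrewr}) move can create $\mathtt{zZ}$ or $\mathtt{Zz}$. It does so only by consuming the single letter between its pattern and an existing commutator, which requires one of the triples $\mathtt{zBa}$, $\mathtt{ZaB}$, $\mathtt{bAz}$, $\mathtt{AbZ}$. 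Examples:
\[
\mathtt{zBa}^k\mathtt{b}\to\mathtt{z}(\mathtt{Za})^k,\quad \mathtt{ZaB}^k\mathtt{A}\to\mathtt{Z}(\mathtt{zB})^k,\quad \mathtt{ab}^k\mathtt{A}\mathtt{z}\to(\mathtt{bZ})^k\mathtt{z},\quad \mathtt{BA}^k\mathtt{b}\mathtt{Z}\to(\mathtt{Az})^k\mathtt{Z}.
\]
The four (\ref{eq:rightrewr}) moves create exactly these triples: $\mathtt{bAz}$, $\mathtt{ZaB}$, $\mathtt{zBa}$, $\mathtt{AbZ}$ respectively. So you must show that no (\ref{eq:leftrewr}) move ever acts next to the output of a (\ref{eq:rightrewr}) move. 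No local check at the time of the (\ref{eq:rightrewr}) move can give that.

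The missing idea is that the global preference splits the process into two phases.

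Once no (\ref{eq:leftrewr}) pattern remains, in each stretch the $\mathtt{a}$-signs change at most once, from $-$ to $+$, and the $\mathtt{b}$-signs at most once, from $+$ to $-$. If there is no commutator yet, the cyclic word then has no sign change at all. A (\ref{eq:rightrewr}) move at such a change does three things:
\begin{enumerate}
\item It cuts its stretch into a piece with no sign change and a piece that still has no (\ref{eq:leftrewr}) pattern. The first piece is never touched again, and that is where the new triple sits.
\item It causes no free cancellation. This also uses the preference, and your count needs it too: $\mathtt{B}\cdot\mathtt{Ab}^k\mathtt{a}\to\mathtt{BbAz}\mathtt{b}^{k-1}\mathtt{a}$ would cancel, but $\mathtt{BAb}$ is a (\ref{eq:leftrewr}) pattern.
\item Its output begins and ends with ordinary letters.
\end{enumerate}
Hence after the first (\ref{eq:rightrewr}) move no (\ref{eq:leftrewr}) move is ever applied, no letter is deleted, and no two commutators become adjacent.

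In the first phase you verify move by move that none of $\mathtt{zZ}$, $\mathtt{Zz}$, $\mathtt{zBa}$, $\mathtt{ZaB}$, $\mathtt{bAz}$, $\mathtt{AbZ}$ is ever created. This includes the single possible free cancellation at either end of $(\mathtt{zB})^k$, $(\mathtt{bZ})^k$, $(\mathtt{Za})^k$, $(\mathtt{Az})^k$. It is the same list of forbidden subwords the paper uses for its algorithm in Section~\ref{sec:algorithm}. The paper's own proof of this lemma compresses all of this into one sentence about the preference, but this is the argument that sentence needs.
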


        The rewritings are: 
        \begin{align}
            \mathtt{aB}^kA &\to (\mathtt{zB})^k, & \mathtt{ab}^kA &\to (\mathtt{bZ})^k, & \mathtt{Ba}^kb &\to (\mathtt{Za})^k, & \mathtt{BA}^kb &\to (\mathtt{Az})^k; \label{eq:leftrewr}\\
            \mathtt{Ab}^k\mathtt{a} &\to \mathtt{bAzb}^{k-1}\mathtt{a}, & \mathtt{AB}^k\mathtt{a} &\to \mathtt{AB}^{k-1}\mathtt{ZaB}, &\mathtt{ba}^k\mathtt{B} &\to \mathtt{ba}^{k-1}\mathtt{zBa}, &\mathtt{bA}^k\mathtt{B} &\to \mathtt{AbZA}^{k-1}\mathtt{B}.&\label{eq:rightrewr}
        \end{align}

        \begin{proof}[Proof of Lemma \ref{lem:admissible}]
        For any cyclically reduced word $w$ in $\{\mathtt{a,b,z,A,B,Z}\}$, define $\eta_\mathtt{a}(w)$ as the number of subwords of the form $\mathtt{ab}^k\mathtt{A}$ or $\mathtt{Ab}^k\mathtt{a}$ for some $k\neq0$. Define $\eta_\mathtt{b}(w)$ symmetrically and \[\eta(w):=\eta_\mathtt{a}(w)+\eta_\mathtt{b}(w).\]
        Note that $\eta(w)=0$ if and only if the word is in admissible form. Also, $\eta>0$ if and only if one of the above rewritings is possible, and in that case the rewritings always decrease $\eta$: the first two of every line decrease $\eta_\mathtt{a}$ and don't increase $\eta_\mathtt{b}$, and the last two of every line behave symmetrically.

        Finally, no $\mathtt{zZ}$ or $\mathtt{Zz}$ will be created during this process because the (\ref{eq:leftrewr}) rewritings do not admit subword combinations with commutator cancellations, and for that to happen with the (\ref{eq:rightrewr}) rewritings the word would first admit a (\ref{eq:leftrewr}) rewriting which has been given preference.
            
        \end{proof}
        
        Recall that we call the representative of an admissible push a word in \emph{admissible form}. A subword is then non-admissible if it has any subword of the form $\mathtt{ab}^k\mathtt{A}$, $\mathtt{Ab}^k\mathtt{a}$, $\mathtt{ba}^k\mathtt{B}$, or $\mathtt{Ba}^k\mathtt{b}$  for any $k\neq0$.
		
		For such an admissible push, define its complexity as 
		\[
		c\left(\prod_{i=1}^n \mathtt{z}^{e_i}w_i\right):=\sum_{w_i\text{ mixed}}\ell(w_i).
		\]
		And the complexity of an element $[w]\in\pi_1(T)$ as 
		\[
		c([w]):=\min\{c(w')\mid w'\text{ admissible push of }w\}.
		\]
		Thus, given an essential curve $\gamma\in\mathcal{C}(T)$, the proposition follows from the following claim.
		
		\textbf{Claim:} If $c([w])>0$, then $c([L(w)]),c([R(w)])<c([w])$.
		This is achieved by re-pushing the non-admissible subwords of the representatives of $[L(w)]$ and $[R(w)]$  without changing the already-admissible part. Assume without loss of generality that $w_i=\mathtt{a}^{n_1}\mathtt{B}^{m_1}\cdots \mathtt{a}^{n_k}\mathtt{B}^{m_k}$, with $n_1,m_k\geq0$ and all the rest of the exponents are strictly positive. Then $R(w_i)=(\mathtt{ab})^{n_1}\mathtt{B}^{m_1}\cdots (\mathtt{ab})^{n_k}\mathtt{B}^{m_k}$ and $L(w_i)=\mathtt{a}^{n_1}(\mathtt{BA})^{m_1}\cdots \mathtt{a}^{n_k}(\mathtt{BA})^{m_k}$. Denote by $\overline{R(w_i)}$ and $\overline{L(w_i)}$ their reduced representatives. For $[R(w_i)]$, applying the rewritings $\mathtt{ba}^k\mathtt{B}\mapsto\mathtt{ba}^{k-1}\mathtt{zBa}$, and $\mathtt{Ba}^k\mathtt{b}\mapsto(\mathtt{Za})^k$ we can make $[R(w_i)]=[\widetilde{R(w_i)}]$ admissible, where $\widetilde{R(w_i)}$ is a representative in admissible form after applying only these rewritings to $\overline{R(w_i)}$. Note that, for $w_i\neq \mathtt{a}^n\mathtt{B}^m$,
		\begin{align*} |\widetilde{R(w_i)}|_\mathtt{a}=|\overline{R(w_i)}|_\mathtt{a}=|w_i|_\mathtt{a}\text{, and}\\
			|\widetilde{R(w_i)}|_\mathtt{B}\leq|\overline{R(w_i)}|_\mathtt{B}<|w_i|_\mathtt{B}.
		\end{align*}
		And, for $w_i= \mathtt{a}^n\mathtt{B}^m$, the first inequality in the second line becomes strict, and the second one is not strict anymore. This already proves that the word length of the new mixed subwords that will be composed by $\mathtt{a}$ and $\mathtt{B}$ will decrease strictly.
		
		For $[L(w_i)]$, apply instead the rewritings $\mathtt{aB}^k\mathtt{A}\mapsto(\mathtt{zB})^k$ and $\mathtt{AB}^k\mathtt{a}\mapsto\mathtt{AB}^{k-1}\mathtt{ZaB}$ and a symmetric argument holds. 
		
		Note that, as in Lemma \ref{lem:admissible}, with these rewritings we will never land in a situation with $\mathtt{zZ}$ or $\mathtt{Zz}$ as a subword falsely separating monotone words.
		
		The analogous visual image of this proof at the pre-train-track consists on starting with an admissible push of a curve $\gamma\in\mathcal{C}(T)$, and re-pushing $L(\gamma)$ and $R(\gamma)$ by leaving the stable parts invariant such that the combinatorial length on the graph of the non-carried subpaths is strictly decreasing, ensuring that it finishes in finite time.
	\end{proof}	
	
	Corollary \ref{cor:maxdisjoint} is a direct consequence of Proposition \ref{prop:nonstable} and Lemma \ref{lem:stable}.	We will now proceed to prove Proposition \ref{thm:twotrees}. For an element $M\in SL_2(\ZZ)$, write $M=(M_1, M_2)$, with $M_i$ being column vectors. Define $Q_1=\{(x,y)\in \ZZ^2\mid x,y\geq0\}$, $Q_2=\{(x,y)\in \ZZ^2\mid x\geq0,y\leq0\}$, $Q_3=\{(x,y)\in \ZZ^2\mid x,y\leq0\}$, $Q_4=\{(x,y)\in \ZZ^2\mid x\leq0,y\geq0\}$, the four closed quadrants of $\ZZ^2$.
	
	\begin{lemma}\label{lem:nonadj}
		For any $M=(M_1,M_2)\in SL_2(\ZZ)$, $M_1,M_2$ are either at the same quadrant or at opposite quadrants. 
	\end{lemma}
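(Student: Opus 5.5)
The plan is to use only the determinant condition. Write $M_1=(a,c)^T$ and $M_2=(b,d)^T$, so that $ad-bc=1$. Call two closed quadrants \emph{adjacent} if they are neither equal nor opposite; the pairs in question are $(Q_1,Q_2)$, $(Q_2,Q_3)$, $(Q_3,Q_4)$ and $(Q_4,Q_1)$. The lemma says that $M_1$ and $M_2$ never lie only in adjacent quadrants. More precisely, it says that some quadrant $Q_i$ contains both columns, or that $M_1\in Q_i$ and $M_2\in Q_{i+2}$ (indices mod $4$). Because the quadrants are closed, a vector on an axis belongs to two quadrants, and this gives extra room.

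First I treat the degenerate case in which some entry vanishes.
\begin{itemize}
\item If $a=0$, then $-bc=1$, so $M_1=(0,\pm1)$ and $b=\mp1$, while $d$ is arbitrary.
\item The vector $(0,\pm1)$ lies in both quadrants having the $y$-axis on their boundary: $Q_1$ and $Q_4$ for $+1$, $Q_2$ and $Q_3$ for $-1$.
\item Whatever quadrant contains $M_2=(b,d)$, one of these two quadrants equals it or is opposite to it. So the claim holds.
\item The cases $b=0$, $c=0$ and $d=0$ are handled the same way by symmetry.
\end{itemize}

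Next comes the generic case, in which all entries are nonzero. Each column then lies in exactly one open quadrant, determined by the signs $(\operatorname{sgn} a,\operatorname{sgn} c)$ and $(\operatorname{sgn} b,\operatorname{sgn} d)$. The columns lie in adjacent quadrants exactly when they agree in one sign coordinate and disagree in the other.
\begin{itemize}
\item Suppose $\operatorname{sgn} a=\operatorname{sgn} b$ and $\operatorname{sgn} c=-\operatorname{sgn} d$. Then $ad$ and $bc$ have opposite signs.
\item Suppose instead $\operatorname{sgn} a=-\operatorname{sgn} b$ and $\operatorname{sgn} c=\operatorname{sgn} d$. Again $ad$ and $bc$ have opposite signs.
\end{itemize}

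The key step is to show that this forces $|ad-bc|\geq 2$. When $ad$ and $bc$ have opposite signs, $|ad-bc|=|ad|+|bc|$. Since every entry is a nonzero integer, $|ad|\geq1$ and $|bc|\geq1$, so $|ad-bc|\geq 2$. This contradicts $\det M=1$. Hence the columns cannot lie in adjacent quadrants, and they are in the same or in opposite quadrants.

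The only delicate point is the bookkeeping on the axes in the degenerate case, where I must check that closedness of the quadrants always provides a compatible choice. I expect no real obstacle.
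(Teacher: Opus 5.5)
Your proof is correct and follows essentially the paper's argument: if the columns lie in adjacent quadrants off the axes, then $ad$ and $bc$ have opposite signs, so $\det M$ is a sum of two integer terms of the same sign and $|\det M|\geq 2$. Your explicit treatment of the axis case, together with working with $|ad-bc|$ (which covers both orientations of an adjacent pair), spells out what the paper's ``without loss of generality'' reduction via $S^j$ to $M_1\in Q_1$, $M_2\in Q_4$ off the axes leaves implicit.
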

	
	\begin{proof}
		Assume otherwise. Since left multiplication by $S^{j}$ permutes quadrants and preserves the determinant, we can assume without loss of generality that $M_1\in Q_1$ and $M_2\in Q_4$, outside of the axes. Hence, $M_1=(x,y)$, $x,y\geq 1$ and $M_2=(-x',y')$, $x',y'\geq1$. However, then 
		\[
		\det M=xy'+yx'\geq2.
		\]
	\end{proof}
	
	And that leads to a clean factorization of the entire $SL_2(\ZZ)$ as follows. 
	
	\begin{lemma}\label{lem:SLfact}With the notation above, we can write
		\[SL_2(\ZZ)= \bigcup_{i=0}^3 S^i\langle L,R\rangle_+\cup \bigcup_{i=0}^3  S^i\langle L,R\rangle_+ S. \]
		Moreover, the expression $M=S^i M' S^\varepsilon$ with $M'\in\langle L,R\rangle_+$, $i\in \{0,1,2,3\}$, and $\varepsilon\in\{0,1\}$ is unique except for the four rotations $\langle S \rangle$ that have exactly two expressions.
	\end{lemma}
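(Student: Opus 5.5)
The plan is to identify $\langle L,R\rangle_+$ with $SL_2(\NN)$, the matrices of $SL_2(\ZZ)$ with all entries nonnegative, using the classical fact recalled in the introduction. Then every $M\in SL_2(\ZZ)$ is sorted by the position of its columns $M_1,M_2$, which Lemma \ref{lem:nonadj} controls.

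\textbf{Both columns in one closed quadrant $Q_j$.} Left multiplication by $S$ maps $Q_j$ onto $Q_{j+1}$ (indices mod $4$), since $S(x,y)=(-y,x)$. Choose $i$ so that $S^{-i}$ carries $Q_j$ to $Q_1$. Then $S^{-i}M$ has nonnegative entries and determinant $1$, so it lies in $SL_2(\NN)=\langle L,R\rangle_+$, giving $M=S^iM'$.

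\textbf{Columns in opposite quadrants.} Use right multiplication: $MS=(M_2,-M_1)$. If $M_1\in Q_j$ and $M_2\in Q_{j+2}$, then $-M_1\in Q_{j+2}$, so both columns of $MS$ lie in $Q_{j+2}$. The previous case gives $MS=S^iM'$, hence $M=S^iM'S^{-1}=S^{i+2}M'S$, using $S^{-1}=S^3$ and the fact that $S^2=-I$ is central. Lemma \ref{lem:nonadj} says these two cases exhaust $SL_2(\ZZ)$. This proves the decomposition.

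\textbf{Uniqueness.} Suppose $S^iM'S^\varepsilon=S^jM''S^\delta$. The only subtlety is a column lying on an axis, where the quadrant is ambiguous.
\begin{itemize}
\item If $\varepsilon=\delta$, then $S^{i-j}M'=M''$ with both $M',M''$ in $SL_2(\NN)$. For $k=i-j\not\equiv 0$, the entries of $S^kM'$ are sign changes and permutations of those of $M'$. With $k=2$, $-M'$ nonnegative forces $M'=0$, which is impossible. With $k=1$, the matrix $S M'=\begin{psmallmatrix}-c&-d\\a&b\end{psmallmatrix}$ is nonnegative only if $c=d=0$, contradicting $\det=1$. The case $k=3$ is symmetric. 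So $i=j$ and $M'=M''$.
\item If $\varepsilon=0$ and $\delta=1$, then $S^{i-j}M'S^{-1}=M''$. Write $N=S^{k}M'S^{-1}\in SL_2(\NN)$. Conjugating and multiplying by $S$'s moves every entry of $M'$, possibly negated, into $N$. Nonnegativity then forces $M'$ to have two zero entries, so $M'\in\{I,\begin{psmallmatrix}0&1\\-1&0\end{psmallmatrix}\}\cap SL_2(\NN)=\{I\}$. Check this by the four values of $k$; the analysis should show $M'$ must be a signed permutation matrix, hence $M'=I$. The same holds for $M''$.
\end{itemize}
So a collision forces $M=S^i=S^{j}S$. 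Each rotation $S^i$ then has exactly the two expressions $S^i\cdot I$ and $S^{i-1}\cdot I\cdot S$. No further expressions exist, because $M'=I$ was forced.

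The main obstacle is this mixed-$\varepsilon$ uniqueness bookkeeping. Concretely, I would prove that if $M'\in SL_2(\NN)$ and $S^kM'S^{-1}\in SL_2(\NN)$, then $M'=I$. I will do this by writing $M'=\begin{psmallmatrix}a&b\\c&d\end{psmallmatrix}$ and listing the four matrices $S^kM'S^{-1}$ explicitly. Each one either has some entry $-a$ or $-d$ (forcing it to be $0$) or forces off-diagonal zeros. Together with $ad-bc=1$ and $a,b,c,d\ge 0$, these constraints should reduce to $M'=I$.
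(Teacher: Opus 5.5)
Your proof is correct and follows the paper's route. Existence goes by sorting the columns with Lemma~\ref{lem:nonadj}, right-multiplying by $S$ in the opposite-quadrant case, and rotating into $Q_1$. Uniqueness comes from the sign constraints that nonnegativity imposes; you check these entrywise on $S^kM'$ and $S^kM'S^{-1}$, where the paper phrases the same check as quadrant configurations, and your version is more explicit about columns on the axes.

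There are two cosmetic slips that do not affect the argument. With the paper's clockwise labelling, $S$ sends $Q_j$ to $Q_{j-1}$, not $Q_{j+1}$. Also, the elements of $SL_2(\ZZ)$ with two zero entries are $\{\pm I,\pm S\}$, not the two matrices you list, although the intersection with $SL_2(\NN)$ is still $\{I\}$.
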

	
	\begin{proof}
		For the existence, by Lemma \ref{lem:nonadj}, either $M$ or $M S^{-1}$ has both columns at the same quadrant. Moreover, left multiplication by $S^{i}$ rotates them to $Q_1$ for some $i\in \{0,1,2,3\}$. Undoing these steps leads to the expression.
		
		For the uniqueness, left multiplication by $S^i$  permutes quadrants. Hence the pair of quadrants occupied determines $i$ once $\varepsilon$ is known. Moreover, $S$ sends $(M_1,M_2)$ to $(-M_2,M_1)$, moving the first column to the opposite quadrant. Hence, $\varepsilon=0$ and $\varepsilon=1$ give different quadrant configurations, unless a column is on the axis. In $\langle L, R\rangle_+$, the elements on the axis are exactly the pure powers $L^n$ and $R^n$. From these, only $M=I$ has both columns on axes, and hence the relation $S^i I S=S^{i+1} I$ gives the four elements represented twice. For every other $M$, the triple $(i,M',\varepsilon)$ is determined.
	\end{proof}
	
	We will call the "$i$" coefficient from Lemma \ref{lem:SLfact} of an element $M\in \SL_2(\ZZ)$ its \emph{rotation}, and denote it by $\mathrm{rot}(M)$. Note that this is a well-defined number for all $M\notin \langle S \rangle$, and is a two-element set otherwise. We need to prove one last ingredient before proving Proposition \ref{thm:twotrees}.

    \begin{lemma} Let $\gamma\in\mathcal{C}(T)$ be an essential curve in standard position with stabilizer $|\mathrm{Stab}_{\mathrm{Mod}(T)}(\gamma)|>2$. Then $\gamma$ has minimal length in $\mathrm{Mod}(T)\cdot \gamma$.   
    \end{lemma}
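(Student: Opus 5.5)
Since $\gamma$ is in standard position, its stabilizer is one of the groups in (\ref{eq:stand}). The stabilizers of order larger than $2$ are $\langle U\rangle$ (order $6$), $\langle S\rangle$ (order $4$), $\langle -U\rangle$ (order $3$), and the infinite groups $\langle L\rangle$, $\langle -L\rangle$, $\langle -I,L\rangle$. I will treat these two families separately and, in each case, identify explicitly the curves with that stabilizer. Then I will check minimality of the word length directly.

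\textbf{Parabolic case.} If $L$ (or $-L$, whose action on unoriented curves agrees with that of $L$ up to the hyperelliptic involution, which fixes all curves) stabilizes $\gamma$, then $\gamma$ is disjoint from $\llbracket\mathtt{a}\rrbracket$. Otherwise $i(L^n\gamma,\gamma)$ would grow with $n$ by the standard Dehn twist intersection formula. On the once-punctured torus, the only essential curves disjoint from $\mathtt a$ are $\llbracket\mathtt a\rrbracket$ itself (and its powers, if one allows non-primitive classes). So $\gamma=\llbracket\mathtt a^k\rrbracket$, of length $k$. Every curve in the orbit is a $k$-th power of a simple closed curve $\llbracket w\rrbracket$, so its length is at least $k\,\ell(w)\geq k$. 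Hence $\gamma$ has minimal length.

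\textbf{Torsion case.} If the stabilizer contains an element of order $3$, $4$ or $6$ acting nontrivially, then $\gamma$ is fixed by $S$ or by $U$ (possibly up to sign, which acts trivially on unoriented curves). My approach is to use the action of $S$ and $U$ on words. $S$ permutes the letters $\mathtt a\mapsto\mathtt b$, $\mathtt b\mapsto\mathtt A$ (up to the chosen convention). $U$ can be realized as an automorphism whose action on cyclic words is also a relabelling, up to conjugation and commutator moves, for instance sending $\mathtt a\mapsto\mathtt b$, $\mathtt b\mapsto\mathtt{BA}$ or a variant. I would rather use the cleaner argument: for $f\in\mathrm{Mod}(T)$ and any $\alpha$, the minimal length in the orbit of $\alpha$ is attained on some $\alpha_0$. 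By Lemma \ref{lem:SLfact}, every orbit element is $S^iM'S^\varepsilon\alpha_0$ with $M'\in\langle L,R\rangle_+$. The key claim to aim for is that standard position forces the chosen representative to be the "center" of the orbit, namely the unique curve, up to $\langle S\rangle$, fixed by the torsion element. I would show that a length-minimizer must also be fixed by some conjugate of the torsion element: the set of minimizers is finite and invariant under the stabilizer's conjugates. I would then combine this with the fact that $S$ (resp.\ $U$) has a unique fixed point in the Farey tree/upper half plane, so the conjugate must equal $S^{\pm1}$ or $U^{\pm1}$ up to an element that preserves lengths.

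\textbf{Main obstacle.} The hard step is this last one: showing that the minimizer lies at the torsion fixed point, rather than merely somewhere in its orbit. My first attempt would be via Proposition \ref{prop:nonstable} and Lemma \ref{lem:stable}. I would show that applying any nontrivial element of $\langle L,R\rangle_+$ to a curve invariant under $S$ (which, after $S$-symmetrization, is balanced in $\mathtt a$ and $\mathtt b$ letters) strictly increases length. Concretely, $R$ replaces each $\mathtt a$-letter by $\mathtt{ab}$, and cancellations can only occur at $\mathtt{aB}$ or $\mathtt{bA}$ junctions. The symmetry of an $S$-invariant word bounds the number of such junctions by the number of $\mathtt a$-letters. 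So the length does not decrease, and I would need to argue that it strictly increases, or at least that the orbit minimum is already attained at $\gamma$. For $U$-invariant curves I would do the analogous count with the three-fold symmetry permuting $\mathtt a,\mathtt b,\mathtt{ab}$.
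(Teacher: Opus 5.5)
Both cases of your proposal have genuine gaps. In the parabolic case, the step ``the only essential curves disjoint from $\mathtt a$ are powers of $\llbracket\mathtt a\rrbracket$'' is true only for simple curves.

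\begin{itemize}
\item Cutting $T$ along $\llbracket\mathtt a\rrbracket$ leaves a pair of pants (two copies of $\mathtt a$ and the puncture). Its fundamental group is free of rank two, and every curve in it is fixed by $L$. Most of these curves are not simple.
\item For example, take $\gamma=\llbracket\mathtt{aabAB}\rrbracket$, which is $\mathtt a$ times the puncture loop $\mathtt{abAB}$. Then $L(\mathtt{aabAB})=\mathtt{aaabABA}$, which cyclically reduces to $\mathtt{aabAB}$. So $\langle L\rangle\subseteq\mathrm{Stab}_{\mathrm{Mod}(T)}(\gamma)$, yet $\gamma$ is not a power of $\mathtt a$.
\end{itemize}

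Your parabolic argument therefore only covers the curves $\llbracket\mathtt a^k\rrbracket$.

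The same example refutes your claim that the hyperelliptic involution $-I$ fixes all curves. It sends $\mathtt{aabAB}$ to $\mathtt{AABab}$, which is cyclically equal neither to $\mathtt{aabAB}$ nor to its inverse $\mathtt{baBAA}$. The involution fixes every simple curve, but not every curve. That is exactly why the standard list distinguishes $\langle L\rangle$ from $\langle -I,L\rangle$. For $-L$ you can still conclude $i(\gamma,\mathtt a)=0$, because $(-L)^2=L^2$.

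The torsion case is not proved, as you acknowledge.

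\begin{itemize}
\item Every orbit element is fixed by some conjugate of $S$ or $U$. The Farey-tree remark therefore restates the lemma rather than proving it.
\item The one-step count is off by a factor of two. Let $n_{\mathtt a}$ be the number of letters $\mathtt a^{\pm1}$ and $J_R=\#\mathtt{aB}+\#\mathtt{bA}$. Each junction deletes two letters, so $\ell(R\gamma)=\ell(\gamma)+n_{\mathtt a}-2J_R$. The bound $J_R\le n_{\mathtt a}$ thus only gives $\ell(R\gamma)\ge\ell(\gamma)-n_{\mathtt a}$.
\item The count does not iterate: $R\gamma$ is no longer $S$-invariant, so nothing follows for longer $M'\in\langle L,R\rangle_+$.
\end{itemize}

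The missing idea, which is how the paper argues, is Whitehead's peak reduction. If $\gamma$ is not length-minimal in its orbit, one of $L^{\pm1}\gamma$, $R^{\pm1}\gamma$ is strictly shorter, so local minimality suffices.

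Your junction count, applied to $R$ and $R^{-1}$ together, gives the convexity needed for this. $R^{-1}$ cancels exactly at $J_{R^{-1}}=\#\mathtt{ab}+\#\mathtt{BA}$. Each letter $\mathtt a^{\pm1}$ lies in at most one junction counted by $J_R+J_{R^{-1}}$. Hence $\ell(R\gamma)+\ell(R^{-1}\gamma)\ge 2\ell(\gamma)$, and likewise for $L$.

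The relations $UL=S$, $RS=U$, $SLS^{-1}=R^{-1}$, $SRS^{-1}=L^{-1}$ then control the neighbours in each case.

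\begin{itemize}
\item If $U\gamma=\gamma$, then $R^{-1}\gamma=S\gamma$ and $L^{-1}\gamma=S^{-1}\gamma$ both have length $\ell(\gamma)$. Convexity forces $\ell(R\gamma)\ge\ell(\gamma)$ and $\ell(L\gamma)\ge\ell(\gamma)$.
\item If $S\gamma=\gamma$, then $\ell(R\gamma)=\ell(L^{-1}\gamma)$ and $\ell(R^{-1}\gamma)=\ell(L\gamma)$. Also $\ell(L\gamma)=\ell(R\gamma)$. To see this, note $J_L=\#\mathtt{Ab}+\#\mathtt{Ba}$ counts sign changes from $-$ to $+$ between consecutive letter blocks, and $J_R$ counts those from $+$ to $-$. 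These are equinumerous in a cyclic word, and $S$-invariance gives $n_{\mathtt a}=n_{\mathtt b}$. So all four neighbours are equally long, and convexity makes each at least $\ell(\gamma)$.
\item If $L\gamma=\gamma$, writing $\gamma$ in $\langle\mathtt a,\mathtt z\rangle$ gives $\ell(R\gamma)=\ell(R^{-1}\gamma)$, so both are at least $\ell(\gamma)$. This handles the non-simple parabolic curves your argument missed.
\end{itemize}
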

    \begin{proof}
        By Whitehead's \emph{Peak Reduction Lemma}, if a curve $\gamma$ does not have minimal length, then at least one of the four moves $L,L^{-1},R,R^{-1}$ strictly decreases length. Hence, it is enough to prove that $\ell(\gamma)$ is a local minimum. Moreover, by convexity along the $R$ and $L$ geodesics, $\ell(R\gamma)+\ell(R^{-1}\gamma)\geq 2\ell(\gamma)$, and $\ell(L\gamma)+\ell(L^{-1}\gamma)\geq 2\ell(\gamma)$.\\
        For this proof we will use the following relations: 
        \[
        UL=S,\qquad RS=U,\qquad SLS^{-1}=R^{-1},\qquad SRS^{-1}=L^{-1},
        \]
        and the fact that the word length is $S$-invariant. 
        Assume first that $S\gamma=\gamma$. Then, 
        \[
        \ell(R\gamma)=\ell(SR\gamma)=\ell(L^{-1}S\gamma)=\ell(L^{-1}\gamma).
        \]
        Similarly, $\ell(R^{-1}\gamma)=\ell(L\gamma)$. Then, \[\ell(R\gamma)+\ell(R^{-1}\gamma), \ell(L\gamma)+\ell(R^{-1}\gamma), \ell(R\gamma)+\ell(L^{-1}\gamma), \ell(L\gamma)+\ell(L^{-1}\gamma)\geq 2\ell(\gamma)\] and these four left-hand expressions take only two values, hence none of the neighbouring lengths  $\ell(L\gamma),\ell(R\gamma),\ell(L^{-1}\gamma),\ell(R^{-1}\gamma)$ can be smaller than $\ell(\gamma)$. Assume now that $U\gamma=\gamma$. Then, 
        \[
        \ell(R^{-1}\gamma)=\ell(R^{-1}U\gamma)=\ell(S\gamma)=\ell(\gamma).
        \]
        Similarly, $\ell(L^{-1})=\ell(\gamma).$ Hence, $\ell(R\gamma)\geq 2\ell(\gamma)-\ell(R^{-1}\gamma)=\ell(\gamma)$,
        and similarly $\ell(L\gamma)\geq\ell(\gamma)$. Assume finally that $L\gamma=\gamma$. Then, $\gamma$ is disjoint from $\llbracket\mathtt{a}\rrbracket$, and hence can be written in $\langle \mathtt{a},\mathtt{z}\rangle$, a generating set of the fundamental group of the pair of pants obtained by cutting $T$ along $\llbracket\mathtt{a}\rrbracket$. Then, the cancellations created by $R$ and $R^{-1}$ are in length-preserving bijection, and so $\ell(R\gamma)=\ell(R^{-1}\gamma)$. Hence, by $\ell(R\gamma)+\ell(R^{-1}\gamma)\geq 2\ell(\gamma)$, they are both bigger than $\ell(\gamma)$.
    \end{proof}
    
    We are finally ready to prove the structural proposition. We will present it again.
	
	\twotrees*
	
	\begin{proof}Let us prove both steps in order.
		\begin{enumerate}
			\item Let $\alpha=M \gamma$, for some $M\in SL_2(\ZZ)$. By Lemma \ref{lem:SLfact}, write $M=S^i M' S^\varepsilon$ for $i\in\{0,1,2,3\}$, $M'\in \langle L,R\rangle_+$ and $\varepsilon\in\{0,1\}$. Then, 
			\[
			S^{-i} \alpha =M' S^\varepsilon \gamma\in \langle L,R\rangle_+\cdot  \gamma \cup \langle L,R \rangle_+ S \cdot \gamma.
			\]
			\item 
			The main idea of this proof is the fact that extra symmetries where the index of the covering is not constant happen only at the normalized root, and these symmetries are given by some torsion elements of $\mathrm{Mod}(T)$. Because $\langle S \rangle$ acts as a permutation of letters, each $\langle S \rangle$-orbit inside $\mathrm{Mod}(T)\cdot \gamma$ consists of curves of constant length, so the set of elements of length $L$ is a union of $\langle S \rangle$-orbits. Fix such an orbit $\langle S \rangle  \alpha$, with $\ell(\alpha)=L$. From now on, consider $T_\gamma\cup T_{S(\gamma)}$ as a set of curves. By Part 1, $\langle S \rangle  \alpha\cap (T_\gamma\cup T_{S(\gamma)})\neq\emptyset$. Define $s(\alpha):= |\langle S \rangle \alpha \cap (T_\gamma\cup T_{S(\gamma)})|$. Then, 
			\[
			\#\{\alpha\in\mathrm{Mod}(T)\cdot \gamma\mid \ell(\alpha)=L\}=\sum_{\substack{\alpha\in T_\gamma\cup T_{S(\gamma)} \\ \ell(\alpha)=L}} \frac{|\langle S \rangle \alpha| }{s(\alpha)}.
			\]
			
			Set $J(\alpha):= \{j\in\ZZ/4\mid S^j\alpha\in T_\gamma \cup T_{S(\gamma)}\}$ containing $0$. Set also $H(\alpha):= \{j\in \ZZ/4\mid S^j\alpha=\alpha\}$. We can rewrite 
			\[
			\frac{|\langle S \rangle \alpha| }{s(\alpha)} = \frac{4/|H(\alpha)|}{|J(\alpha)|/|H(\alpha)|}=\frac{4}{|J(\alpha)|}.
			\]
			Hence, we want to prove that $4/|J(\alpha)|$ is constant for $\alpha$ without minimal length.
			Recall that $\gamma$ is in standard position: meaning that $\mathrm{Stab}_{\mathrm{Mod}(T)}(\gamma)$ is one of List (\ref{eq:stand}). The rest of the proof will follow three steps: Write $\alpha=M \gamma$, with $M\in\mathrm{Mod}(T)$. Then, the rotation $\mathrm{rot}(M)$ of $M$ depends only on the first column of $M$, $Me_1$, when it is outside of the axes; every contribution to $|J(\alpha)|$ comes from a rotation which itself can be associated to a stabilizer in $\mathrm{Stab}_{\mathrm{Mod}(T)}(\gamma)$; and, in standard position, those contributions depend only on $\gamma$.

            For the first step: Use Lemma \ref{lem:SLfact} to write $M=S^i M' S^\varepsilon$, with $M'\in\langle L, R\rangle_+$, so both $M'e_i$ and $M'S e_1=M'e_2$ are non-negative vectors. Also, $S$ permutes quadrants, hence, if $Me_1$ does not contain a zero, then $\mathrm{rot}(M)$ is determined by $Me_1$. 
            Also, in that case, $\mathrm{rot}(M)\in\{0,2\}$ if $Me_1$ are of equal sign and $\mathrm{rot}(M)\in\{1,3\}$ if they have different signs.

            For the second step: we want to prove that, for $\alpha\in T_\gamma\cup T_{S(\gamma)}\setminus \langle S\rangle \gamma$, 
            \[
            J(\alpha)=\{-\mathrm{rot}(Mh^{-1})\mid h\in\mathrm{Stab}_{\mathrm{Mod}(T)}(\gamma)\}.
            \]

            Write $\alpha=M\gamma$ for $M=M'S^\varepsilon$ with $M'\in\langle L,R\rangle_+$, $\varepsilon\in\{0,1\}$. This corresponds to the trivial fact that $0\in J(\alpha)$, which corresponds to $-\mathrm{rot}(M)=0$ for $I\in\mathrm{Stab}_{\mathrm{Mod}(T)}(\gamma)$. 
            
            Take now $j\in J(\alpha)\setminus\{0\}$. Then we can also write $S^j\alpha=N\gamma\in T_\gamma\cup T_{S(\gamma)}$. Hence, $N\in\langle L,R\rangle_+$ or $N=N'S$ with $N'\in\langle L,R\rangle_+$. Define then $h:=N^{-1}S^jM\in \mathrm{Stab}_{\mathrm{Mod}(T)}(\gamma)$. It follows that $Mh^{-1}=S^{-j}N$ and hence $\mathrm{rot}(Mh^{-1})=-j$. Also, $h\neq I$, as otherwise $S^jM$ would have two factorizations, but by Lemma \ref{lem:SLfact} that's only possible if $S^jM=\langle S\rangle$, contradicting the fact that $\alpha\notin \langle S\rangle \gamma$.

            Conversely, if we start with an element $h\in\mathrm{Stab}_{\mathrm{Mod}(T)}(\gamma)\setminus\{I\}$, we just set $j=-\mathrm{rot}(Mh^{-1})$, and then $S^jMh^{-1}\gamma=S^j\alpha$ implies that $j\in J(\alpha)$.

            For the third step we want to prove two things: first, that $J(\alpha)\subseteq \{0,2\}$, and second that 
            \[2\in J(\alpha)\iff \mathrm{Stab}_{\mathrm{Mod}(T)}(\gamma) \text{ contains a non-trivial torsion element.}\]

            Assume first that $j\in J(\alpha)\setminus \{0,2\}$. By the second step it corresponds to $\mathrm{rot}(Mh^{-1})$ being odd for some $h\in\mathrm{Stab}_{\mathrm{Mod}(T)}(\gamma)$. Note that $Mh^{-1}=M'S^\varepsilon h^{-1}$. By the first step, that corresponds to $S^\varepsilon h^{-1}e_1$ having opposite signs. By the standard position hypothesis, every $h\in \mathrm{Stab}_{\mathrm{Mod}(T)}(\gamma)$ can be written as $h=(-I)^\delta c^k$ with $c\in \{S,U,L\}$, $\delta\in\{0,1\}$, $k\in\mathbb{Z}$. Then, for the following it is enough to assume that $h$ is either a power of $L$, of $-I$, of $S$, or of $U$. In the first three cases, the vector $S^\varepsilon h^{-1}e_1$ can be either $\pm e_1$ or $\pm e_2$. If $h=U^{\pm1}$, then $S^\varepsilon h^{-1}e_1$ can be $(0,1)$, $(-1,0)$, $(-1,-1)$, and $(1,-1)$. From these, only $(1,-1)$ has coordinates of opposite signs, and both $L(1,-1)$ and $R(1,-1)$ no longer do, hence no $M'\neq I$ can produce opposite signs. Hence, $J(\alpha)\subseteq \{0,2\}$.

            Finally, we prove the equivalence from the beginning of the third step. Let us first prove the left implication. A torsion element in the stabilizer can have either order $2,3,4,$ or $6$. Hence, it contains either $-I$ or $U$, or both. Note that if $-I\in\mathrm{Stab}_{\mathrm{Mod}(T)}(\gamma)$, then $M\cdot (-I)=S^2M$ and hence $2\in J(\alpha)$. If $U\in \mathrm{Stab}_{\mathrm{Mod}(T)}(\gamma)$, then note that
            \[
			I+U+U^2=I+\begin{psmallmatrix}
				0 & -1 \\ 1 & -1
			\end{psmallmatrix}+\begin{psmallmatrix}
				0 & -1 \\ 1 & -1
			\end{psmallmatrix}^2=0.
			\]
            Hence, $Me_1+MUe_1+MU^2e_1=0$ are three vectors summing zero. Hence, they cannot be in the same quadrant. Thus, the rotations $\mathrm{rot}(M)$, $\mathrm{rot}(MU)$, and $\mathrm{rot}(MU^2)$ are not all equal. We also know that $\mathrm{rot}(M)=0$ by definition. Therefore, $2\in J(\alpha)$. 

            For the converse direction, if $\mathrm{Stab}_{\mathrm{Mod}(T)}(\gamma)$ is trivial, then $J(\alpha)=\{0\}$. Otherwise, the other elements of $J(\alpha)$ are only generated by the rotation of an infinite-order stabilizer element, which by standard position is $L$. Note that $ML^ne_1=M'S^\varepsilon e_1$ are either positive vectors or on an axis of the positive quadrant. If they are positive, $ML^n$ has rotation $\mathrm{rot}(ML^n)=0$ for all $n\in\ZZ$. Otherwise, forces $M'=L^{n'}$ or $M'=R^{n'}$ for some $n'\in\mathbb{Z}$. Then, since $L$ fixes $\gamma$ and $R$ fixes $S(\gamma)$, we find $\alpha=\langle S\rangle \gamma$, bringing a contradiction. 
        \end{enumerate}

        \end{proof}

	\section{Counting}
	
	In this section, we will prove Theorem \ref{thm:mainthm_subtree}. That is, for a curve $\gamma\in\mathcal{C}(T)$, we will give the counting function in a maximal stable subtree of $T_\gamma$. A maximal stable subtree corresponds to a rooted binary tree with edges corresponding to $L$- and $R$-actions and the root being a stable curve $\llbracket  w \rrbracket$ with $w=\prod_{i=1}^n\mathtt{z}^{e_i}w_i$ such that all $e_i\neq0$ when $n>1$, $w_i$ monotone, and, as usual, $\mathtt{z}=[\mathtt{a},\mathtt{B}]$. Note that $w$ might not be reduced. The existence of a counting function depends on the regularity of the number of cancelling pairs of the word. Define the cancelling pairs number for such a word as: 
	\[
	c(w):=\frac{1}{2}\left(\sum_{i=1}^n\ell(w_i)+4\sum_{i=1}^n |e_i|-\ell(\llbracket w\rrbracket)\right).
	\]
	
	\begin{lemma}\label{lem:cttcanc}
		For a stable word $w=\prod_{i=1}^n\mathtt{z}^{e_i}w_i$, and writing for any $M\in \langle L, R\rangle_+$, $\tilde{M}(w)=\prod_{i=1}^n\mathtt{z}^{e_i}M(w_i)$. Then, 
		\begin{enumerate}
			\item $c(\tilde{M}w)$ is constant for all $M\in \langle L,R \rangle_+\setminus(\langle L\rangle_+ \cup \langle R \rangle_+)$;
			\item  $c(\tilde{M}w)$ is constant for all $M\in\langle L\rangle_+$; and
			\item  $c(\tilde{M}w)$ is constant for all $M\in\langle R\rangle_+$.
		\end{enumerate}
		We will denote these constants by $c_{int}(w)$, $c_L(w)$, and $c_R(w)$, respectively.
	\end{lemma}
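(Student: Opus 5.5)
We argue that the number of cancelling pairs in $\tilde M(w)=\prod_i \mathtt{z}^{e_i}M(w_i)$ is determined purely by \emph{local} data at the junctions between consecutive blocks. We also show that this local data stabilises as soon as $M$ contains at least one $L$ and one $R$, or is a pure power of one of them.

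First we localise the cancellations. Since each $M(w_i)$ is monotone, it is freely reduced. Each $\mathtt{z}^{e_i}$ is also reduced, because $e_i\neq 0$ and there are no $\mathtt{zZ}$ subwords. Hence all cancellations in $\tilde M(w)$ happen at the cyclic junctions $\cdots M(w_{i-1})\,\mathtt{z}^{e_i}\,M(w_i)\cdots$. We claim that a cancellation can never consume a whole monotone block $M(w_i)$ together with letters beyond it, except in bounded configurations. Granting this, $2c(\tilde M w)$ is a sum over junctions $i$ of the number of letters cancelled at the junction between $\mathtt{z}^{e_i}$ and its neighbours. This number depends only on the sign of $e_i$ (and on $|e_i|$ only through whether it equals $1$), on the last few letters of $M(w_{i-1})$, and on the first few letters of $M(w_i)$. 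Cyclic reduction is handled in the same way, by treating the word cyclically; when $n=1$ there is a single junction with itself.

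Next we compute how $L$ and $R$ act on the first and last letters of a monotone word. Take a positive word (the negative case is its inverse). The first letter of $M(w_i)$ is $\mathtt{a}$ whenever $M$ contains an $R$ or $w_i$ starts with $\mathtt{a}$. Similarly, the last letter is $\mathtt{b}$ whenever $M$ contains an $L$ or $w_i$ ends with $\mathtt{b}$. More precisely, we track prefixes of the form $\mathtt{a}^p\mathtt{b}$ and suffixes of the form $\mathtt{a}\mathtt{b}^q$. Applying $L$ or $R$ changes these exponents, but once $M$ involves both generators, the first letter is $\mathtt{a}$, the last letter is $\mathtt{b}$, and the second and penultimate letters are controlled. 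Now $\mathtt{z}=\mathtt{aBAb}$ can only cancel against a suffix ending in $\mathtt{A}$ or a prefix starting with $\mathtt{B}$, and against a positive block this gives cancellation only through specific letters. So we verify by a short finite case analysis on the sign of $e_i$ and on the signs of the neighbouring blocks that the number of cancelled letters at each junction is determined by the letters of $M(w_{i\pm1})$ adjacent to the junction, up to depth at most $2$ (the length of a half-commutator). Then we check that these adjacent letters take the same values for all $M\in\langle L,R\rangle_+\setminus(\langle L\rangle_+\cup\langle R\rangle_+)$. For $M=L^k$ or $M=R^k$ with $k\geq1$, the boundary letters are likewise fixed, since $L^k$ fixes $\mathtt{a}$ and sends $\mathtt{b}\mapsto\mathtt{a}^k\mathtt{b}$. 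The relevant prefix and suffix types are then constant in $k$, though they may differ from the mixed case. Whether $M=I$ belongs to these classes should be checked against the intended convention of $\langle\cdot\rangle_+$.

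The main obstacle is the localisation claim. A cancellation could propagate through an entire short block, for instance when $w_i$ is a single letter, or when two opposite-signed commutator powers are separated by a block. If that happened, the count would depend on lengths that grow under $M$. We would handle this by observing that after applying at least one $L$ or $R$, every block has length at least $2$ and begins and ends with letters that cannot cancel further into a $\mathtt{z}^{\pm1}$ beyond depth $2$. For the pure-power cases we would check the few short configurations directly. If needed, we would first apply the rewritings of Lemma~\ref{lem:admissible} to normalise $w$.
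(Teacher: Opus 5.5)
Your architecture matches the paper's: cancellations live at the interfaces between the blocks $M(w_i)$ and the commutator powers, and the relevant boundary data of $M(w_i)$ is constant on each class. As written, though, the step you plan to verify fails, and the fact that makes the paper's argument work is missing. You track letters of $M(w_i)$ to depth $2$ and assert that the second and penultimate letters are controlled on $\langle L,R\rangle_+\setminus(\langle L\rangle_+\cup\langle R\rangle_+)$. They are not constant there: $R(L(\mathtt{a}))=\mathtt{ab}$ but $L(R(\mathtt{a}))=\mathtt{aab}$, and $R(L(\mathtt{b}))=\mathtt{abb}$ but $L(R(\mathtt{b}))=\mathtt{ab}$.

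What rescues the argument is that depth $1$ suffices. To cancel two letters against an adjacent $\mathtt{z}^{\pm1}$, a block would have to begin or end with one of $\mathtt{aB},\mathtt{Ab},\mathtt{Ba},\mathtt{bA}$. These mix signs, so they never occur in a monotone word. Hence a block of length at least $2$ cancels at most one letter against each neighbouring commutator, and whether it does is decided by one boundary letter and the sign of the adjacent exponent. In the mixed class every block has length at least $2$, since $\mathtt{a}^{\pm1}$ is lengthened by $R$, $\mathtt{b}^{\pm1}$ by $L$, and lengths never drop. Even a length-two block eaten from both sides leaves remnants that do not cancel: $\mathtt{z}\,\mathtt{BA}\,\mathtt{z}\to\mathtt{aBA}\,\mathtt{BAb}$ and $\mathtt{Z}\,\mathtt{ab}\,\mathtt{Z}\to\mathtt{Bab}\,\mathtt{abA}$. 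This is the localisation you only ``grant''. Combined with the first/last-letter table, it gives the paper's claim that each interface contributes $0$ or $1$ pair, constant on the class.

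Three smaller corrections. First, your boundary-letter rules have $L$ and $R$ swapped. Since $R$ fixes $\mathtt{b}$ and $L$ fixes $\mathtt{a}$, a positive block begins with $\mathtt{a}$ once $M$ contains an $L$, and ends with $\mathtt{b}$ once $M$ contains an $R$. Your own formula $L^k(\mathtt{b})=\mathtt{a}^k\mathtt{b}$ contradicts what you wrote, and the swap would interchange $c_L$ and $c_R$. Second, ``after one application every block has length at least $2$'' is false on the rays, since $L^k(\mathtt{a})=\mathtt{a}$ and $R^k(\mathtt{b})=\mathtt{b}$. There a single letter can be fully cancelled and pass the cancellation on to the next commutator; for example $\mathtt{Z}\,\mathtt{a}\,\mathtt{Z}$ reduces to $\mathtt{BaabA}$, which is two pairs. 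Such configurations are fixed along the ray, so constancy for $k\ge 1$ survives. Third, the identity must be excluded from both ray classes. For instance $c(\mathtt{Z}\mathtt{b}^2\mathtt{z}\mathtt{a})=0$, whereas $c=1$ after any $L^k$ or $R^k$ with $k\geq 1$.
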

	\begin{proof}
		The proof will follow from the following table. Denote $\mathrm{first}(w)$ and $\mathrm{last}(w)$ the first and last letters of a word $w$. Note then how do $L$ and $R$ change the last and first letters of a word.
		\[
		\begin{matrix}
			\mathrm{first}(w)&  \mathrm{first}(L(w)) &    \mathrm{first}(R(w)) \\
			\mathtt{a} &  \mathtt{a}   & \mathtt{a} \\
			\mathtt{b} &  \mathtt{a}   & \mathtt{b} \\
			\mathtt{A} &  \mathtt{A}   & \mathtt{B} \\
			\mathtt{B} &  \mathtt{B}   & \mathtt{B} \\
		\end{matrix}\hspace{50pt} 
		\begin{matrix}
			\mathrm{last}(w) & \mathrm{last}(L(w))  & \mathrm{last}(R(w))  \\
			\mathtt{a} & \mathtt{a} & \mathtt{b} \\
			\mathtt{b} & \mathtt{b} & \mathtt{b} \\
			\mathtt{A} & \mathtt{A} & \mathtt{A} \\
			\mathtt{B} & \mathtt{A} & \mathtt{B} \\
		\end{matrix}
		\]

		All possible cancellations in $\tilde{M}w$ will happen between the monotone subwords $M(w_i)$ and commutators $\mathtt{z,Z}$. Look for example at a subword of the form $w_i z$, where cancellations might happen if $\mathrm{last}(w_i)=\mathtt{A}$. In this case, if $w_i$ is positive, there will be no cancellation in $ M(w_i)\mathtt{z}$ there for any $M\in \langle L, R\rangle_+$, if $\mathrm{last}(w_i)=\mathtt{A}$, there will be one pair cancelling on the entire subtree, and if $\mathrm{last}(w_i)=\mathtt{B}$, there will be no cancellation on the ray $M\in \langle R\rangle_+ $ but one pair cancelling outside.
		
		For every possible cancellation coming from a word touching a commutator, there is one positivity of the word that gives no cancellations on the entire subtree, and one positivity that gives, for one choice of letters a constant one-pair cancellation on the entire subtree, and one choice that gives no cancellation on one ray, and one-pair cancellation on the rest of the tree.
	\end{proof}
	
	Let us now restate Theorem \ref{thm:mainthm_subtree}, and prove it right after. 
	
	\theoremsubtree*
	
	\begin{proof}
		Fix a stable subtree $\Gamma=\langle L, R\rangle_+ \llbracket w\rrbracket$ with stable root $w=\prod_{i=1}^n z^{e_i}w_i$. Note that a monotone word $w_i$ satisfies
		\[
		\ell(L([w_i]))=\ell([w_i])+|w_i|_{\{\mathtt{b},\mathtt{B}\}}\text{, and }\ell(R([w_i]))=\ell([w_i])+|w_i|_{\{\mathtt{a},\mathtt{A}\}}.\] 

        Assume first that $\sum_{i=1}^n|w_i|_{\{\mathtt{b},\mathtt{B}\}},\sum_{i=1}^n|w_i|_{\{\mathtt{a},\mathtt{A}\}}\neq0$. 
		Then, setting $P=\sum_{i=1}^n|w_i|_{\{\mathtt{b},\mathtt{B}\}}$, $Q=\sum_{i=1}^n|w_i|_{\{\mathtt{a},\mathtt{A}\}}$, and $E=\sum_{i=1}^n |e_i|$, it follows from Lemma \ref{lem:cttcanc} that
		\begin{align*}
			\#\{&\alpha\in T \mid \ell(\alpha)=L\}=\varphi_{P,Q}(L-4E+2c_{int}(w))\\&-\#\{x\geq1\mid Px+Q=L-4E+2c_{int}(w)\}-\#\{y\geq1\mid P+Qy=L-4E+2c_{int}(w)\}\\&+\#\{x\geq1\mid Px+Q=L-4E+2c_{L}(w)\}+\#\{y\geq1\mid P+Qy=L-4E+2c_{R}(w)\}.
		\end{align*}	
		The first term counts the number of words with cyclically reduced length $L$ given the linear growth of the monotone words and assuming constant cancellation number with the commutators between them. The periodic correction balances out the possibly different cancellation at the boundary rays of the tree.

        In the case of $P=0$ or $Q=0$, the curve stabilizer contains either $L$ or $R$. In particular, by the standard position hypothesis, this can only happen at the base of the trees $T_\gamma$, $T_{S(\gamma)}$. To avoid the one infinite edge with all nodes representing the same curve, apply the element in $\{L,R\}$ that does not stabilize the curve and we can set the counting from there, where both $P$ and $Q$ coefficients will be non-zero, and at most losing $L_0$ of minimal length in the orbit.
		
	\end{proof}

	\begin{example}\label{ex:counting} Let us compute the counting function for $\gamma=\llbracket \mathtt{a}^2\mathtt{b}^2\mathtt{A}^2\mathtt{b}^2\mathtt{Ab}\rrbracket$. Using the algorithm in Section \ref{sec:algorithm}, we find the stable subtrees and their counting functions shown in Figure \ref{fig:solving_trees}.
		\begin{figure}[ht]\centering
			\begin{overpic}[width=\linewidth]{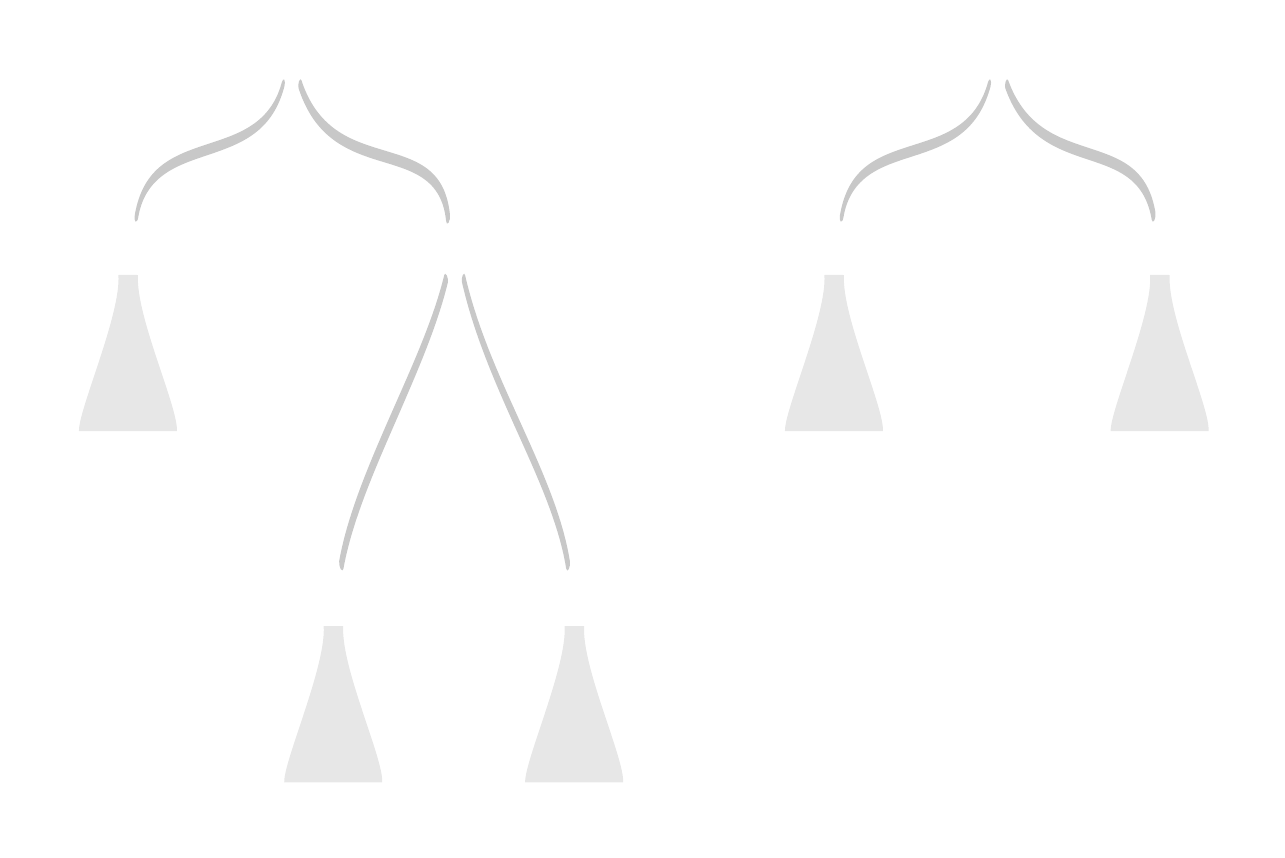}
				\put(15,62){$\llbracket \mathtt{a}^2\mathtt{b}^2\mathtt{A}^2\mathtt{b}^2\mathtt{Ab}\rrbracket$}
				\put(70,62){$\llbracket \mathtt{b}^2\mathtt{A}^2\mathtt{B}^2\mathtt{A}^2\mathtt{BA}\rrbracket$}
				\put(3,46.5){\color{cyan}$\llbracket \mathtt{zB}^2\mathtt{A}^3\mathtt{B}^2\mathtt{AB}\rrbracket$}
				\put(27.5,46.5){$\llbracket \mathtt{BABaZazBzB}\rrbracket$}
				\put(58,46.5){\color{cyan}$\llbracket \mathtt{zbZ(AB)}^2\mathtt{A}^4\rrbracket$}
				\put(82,46.5){\color{cyan}$\llbracket \mathtt{zABAB}^3(\mathtt{AB})^2\mathtt{A}\rrbracket$}
				\put(17,19){\color{cyan}$\llbracket \mathtt{ZazB}^2\mathtt{A}^2\mathtt{BA}^2\mathtt{B}\rrbracket$}
				\put(36,19){\color{cyan}$\llbracket \mathtt{ZbZBAzAzab}^3\rrbracket$}
				\put(6,30){\small$\varphi_{4,5}(L)$} \put(0,27){\small$+\,[0,-1,0,1]\ (\mathrm{mod}\ 4)$}		
				\put(59,30){\small$\varphi_{3,6}(L-4)$}
				\put(86.5,30){\small$\varphi_{5,6}(L)$} \put(76,27){\footnotesize$+\,[0,1,0,0,0,-1]\ (\mathrm{mod}\ 6)$}		
				\put(20,2){\small$\varphi_{4,5}(L-4)$}
				\put(38.5,2){\small$\varphi_{3,5}(L-4)$}
			\end{overpic}\vspace{-15pt}
			\caption{Stable subtrees and counting functions of $\gamma=\llbracket \mathtt{a^2b^2A^2b^2Ab}\rrbracket$.}
			\label{fig:solving_trees}
		\end{figure}
		
		Thus, putting them together, the general count is, for $L\geq12$, 
		\begin{align*}
			\#\{\alpha\in\mathrm{Mod}(T)\cdot \gamma \mid \ell(\alpha)= L\}=\varphi_{3, 5}( L-4) + \varphi_{3, 6}( L-4) + \varphi_{4, 5}( L-4) +\\+ \varphi_{4, 5}( L) + \varphi_{5, 6}( L) + [0, 0, 0, 1, 0, -2, 0, 2, 0, -1, 0, 0]\ (L\ \mathrm{mod}\ 12).
		\end{align*}
		
	\end{example}
	
	Finally, we will prove a trivial bound for the initial length $L_0$ in Theorem \ref{thm:mainthm}. Note however that, generically, it will be closer to the minimal word length in the orbit, and it is computable at every case by following the algorithm in Section \ref{sec:algorithm}.
	
	\begin{lemma}\label{lem:boundL0}
		The constant $L_0$ in Theorem \ref{thm:mainthm} is trivially bounded by 
		\[
		L_0\leq 1+\ell_\mathrm{min}\cdot 2^{1+\ell_\mathrm{min}},
		\]
		where $\ell_\mathrm{min}:=\min\{\ell(\alpha)\mid \alpha\in\mathrm{Mod}(T)\cdot\gamma\}$.
	\end{lemma}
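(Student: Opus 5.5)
We bound $L_0$ by bounding how deep the unstable part of the trees $T_\gamma$ and $T_{S(\gamma)}$ can reach, and then how large the lengths there can be. By Proposition \ref{thm:twotrees}, after placing $\gamma$ in standard position with $\ell(\gamma)=\ell_\mathrm{min}$, the count for $L>\ell_\mathrm{min}$ is $m$ times the count on $T_\gamma\cup T_{S(\gamma)}$. By Corollary \ref{cor:maxdisjoint}, these trees split into a finite unstable part and finitely many maximal stable subtrees. On each stable subtree the formula of Theorem \ref{thm:mainthm_subtree} holds once $L$ exceeds the lengths at which the boundary corrections of Lemma \ref{lem:cttcanc} stabilise; in practice this means once $L$ exceeds the length of the root of that subtree plus the first step along each ray. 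So it suffices to show that every unstable vertex, and every root of a maximal stable subtree together with its children, has length at most $\ell_\mathrm{min}2^{1+\ell_\mathrm{min}}$.

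First we bound the depth of the unstable region using the complexity $c$ from the proof of Proposition \ref{prop:nonstable}. The Claim there says $c$ strictly decreases along every edge from an unstable vertex. At the root, $c([w])\le \ell(w)=\ell_\mathrm{min}$. Hence every unstable vertex of $T_\gamma$ (and likewise of $T_{S(\gamma)}$, whose root has the same length) lies at depth at most $\ell_\mathrm{min}$. Consequently every maximal stable subtree is rooted at depth at most $\ell_\mathrm{min}+1$.

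Second, we bound lengths at bounded depth. A single application of $L$ or $R$ replaces each letter by a word of length at most $2$, so it at most doubles the word length. A vertex at depth $d$ therefore has length at most $\ell_\mathrm{min}2^{d}$. Taking $d\le \ell_\mathrm{min}+1$ shows that all unstable vertices and all roots of maximal stable subtrees have length at most $\ell_\mathrm{min}2^{1+\ell_\mathrm{min}}$.

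Third, we check that the formula is valid beyond this length. For $L>\ell_\mathrm{min}2^{1+\ell_\mathrm{min}}$, no unstable vertex contributes to the count, and each stable subtree contributes exactly its $\varphi_{P,Q}(L-4K)+C(L)$. Summing these contributions and multiplying by $m$ gives Theorem \ref{thm:mainthm} with $L_0\le 1+\ell_\mathrm{min}2^{1+\ell_\mathrm{min}}$.

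The main obstacle is justifying that Theorem \ref{thm:mainthm_subtree} already holds from just above the root length of each stable subtree. This requires checking that the periodic correction terms, which involve counts such as $\#\{x\ge1\mid Px+Q=\cdot\}$, are exact once $L$ exceeds the root length. It also requires handling the degenerate case $P=0$ or $Q=0$, where one extra $L$- or $R$-step is applied; that step is the source of the extra factor $2$ in the exponent. I would argue this directly from the identity in the proof of Theorem \ref{thm:mainthm_subtree}, which is exact for all $L$ provided no smaller vertices are omitted.
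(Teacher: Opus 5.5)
Your argument is essentially the paper's: the complexity from the proof of Proposition~\ref{prop:nonstable} is at most $\ell_{\mathrm{min}}$ at the root and strictly drops along every edge out of an unstable vertex, which puts all unstable vertices and all roots of maximal stable subtrees at depth at most $1+\ell_{\mathrm{min}}$, and each $L$- or $R$-step at most doubles the length. Two bookkeeping slips, neither affecting the bound: first, you need not bound the children of the stable roots, since the identity in the proof of Theorem~\ref{thm:mainthm_subtree} is already exact once $L$ exceeds the root length (the root carries the fewest cancelling pairs), and even if you wanted the children your count has one unit of slack, because unstable vertices lie at depth at most $\ell_{\mathrm{min}}-1$; second, the extra $1$ in the exponent comes from this depth count, not from the degenerate case $P=0$ or $Q=0$, which by standard position only arises at depth $0$.
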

	\begin{proof}
		Let $w_\mathrm{min}$ be a representative in the $\mathrm{Mod}(T)$-orbit with minimal length. Note that the depth of the terminal words in the proof of Proposition \ref{prop:nonstable} is trivially bounded by $1+\ell(\llbracket w_\mathrm{min}\rrbracket)$. Finally, with each $L$- and $R$-action, the length grows at most by doubling.
	\end{proof}
	
	We can also prove the asymptotic growth theorem.
	
	\asymptotic*
\begin{proof}
		Set \[N_{P,Q}(L)=\#\{(x,y)\in\mathbb{Z}_{>0}^2\mid Px+Qy=L\}.\]
		It is well-known (see e.g. \cite[Theorem 6]{BR02}) that, when $\gcd(P,Q)\mid L$, then
		$N_{P,Q}(L)=\frac{L\gcd(P,Q)}{PQ}+E(L)$ and vanishes otherwise. Here $E$ is a function bounded by $1$. Write now by considering the parameter $d$ as the greatest common divisor of a solution,
		\[N_{P,Q}(L)=\sum_{d\mid L}\varphi_{P,Q}(L/d).\]
		Hence, by Möbius inversion formula, with $\mu$ being the Möbius function,
		\begin{align*}\varphi_{P,Q}(L)&=\sum_{d\mid L}\mu(d)N_{P,Q}(L/d)\\&=\sum_{d\mid (L/\gcd(P,Q))}\mu(d)N_{(P,Q)}(L/d)\\&=\sum_{d\mid (L/\gcd(P,Q))}\mu(d)\left(\frac{\gcd(P,Q)}{PQ}\frac{L}{d}+E\left(\frac{L}{d}\right)\right)\\&=\varphi_{1,1}\left(\frac{L}{\gcd(P,Q)}\right)\frac{\gcd(P,Q)^2}{PQ}+\sum_{d\mid (L/\gcd(P,Q))}\mu(d)E\left(\frac{L}{d}\right),
        \end{align*}        
		where the second equality comes from the solutions only existing when $L/d$ is a multiple of $\gcd(P,Q)$ and the third equality follows from the Möbius inversion formula again. Given that, by Dirichlet's Divisor Problem,
        \[
        \sum_{l=1}^L\sum_{d\mid (l/\gcd(P,Q))}\mu(d)E\left(\frac{l}{d}\right)\leq\sum_{l=1}^L\sum_{d\mid (l/\gcd(P,Q))}|\mu(d)|=O(L\log L),
        \]
        the theorem follows from the known asymptotic $\sum_{l=1}^L\varphi_{1,1}(l)\sim3L^2/\pi^2$.
	\end{proof}

	\section{Intersection resolutions}\label{sec:resolutions}
	
	The following is a standard procedure. For a longer background see \cite{Turaev1991}. Let $\gamma\in\mathcal{M}(T)$. Take a representative $c:(S^1)^k\to T$---i.e. a \emph{multiloop}---in \emph{minimal position}: minimizing the number of intersection points and with all intersections being simple. Fix $p\in\mathrm{im}(c)$ an intersection point. We consider the two local smoothings that produce multiloops with lower intersection, as in Figure \ref{fig:resolution1}.
	
	\begin{figure}[h!]
		\centering
		\includegraphics[width=0.8\linewidth]{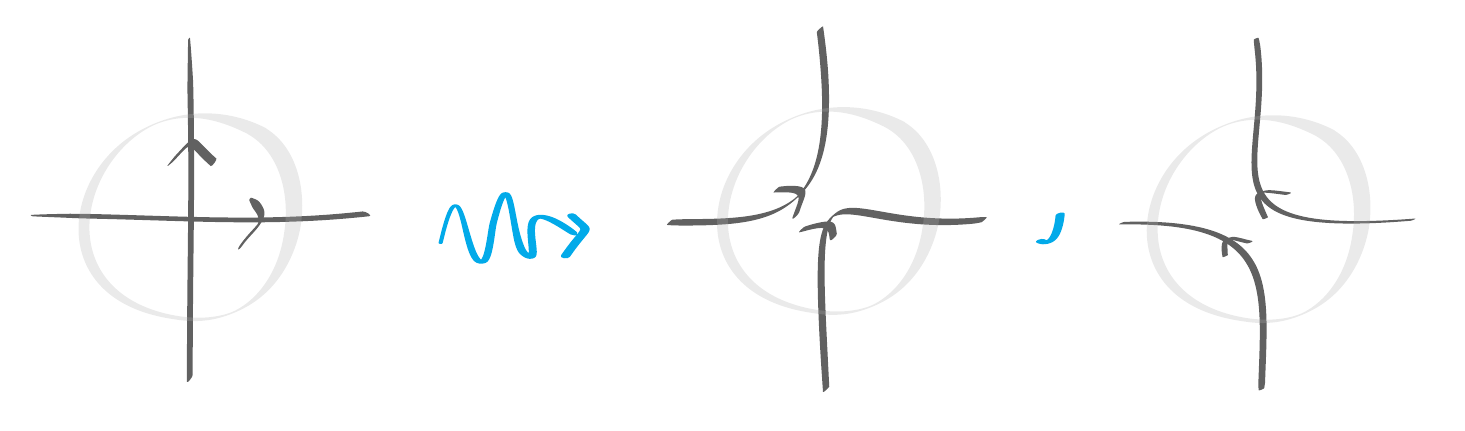}
		\caption{The two local surgeries resolving an intersection.}
		\label{fig:resolution1}
	\end{figure}
	
	For a pair of a multiloop in minimal position and a self-intersection point $(c,p)$ with $c:(S^1)^k\to T$, define the corresponding intersection point of $\gamma=[c]$ as $([p],\gamma)=\{(\eta(p,1),\eta(c,1))\mid \eta:  (S^1)^k\times [0,1] \to T \text{ homotopy with }\eta(s,0)=c(s) \}$.
	
	This allows us to extend the two operations on the multiloop to its free homotopy class. We will write them with implicit dependence on the intersection point:
	\[r_{op},r_{or}:\gamma \to [r_{op}(c,p)]\sqcup [r_{or}(c,p)]\subseteq \mathcal{M}(T).\]
	
	Note that the number of components of the multicurve is not fixed under these operations. It can both increase, decrease, and stay invariant: for example, among the resolutions of the self-intersection of $\{\mathtt{a}^2\mathtt{b}^2\}$, one increases and one leaves invariant the number of components; on the other hand, both resolutions for $\{\mathtt{a},\mathtt{b}\}$  decrease the number of components.
	
	The above names come from orientation-preserving and orientation-reversing, since the action is as follows. If the self-intersection is inside one component, one resolution splits the component into two respecting any given orientation, and the other one reattaches them reversing the orientation of one of the components.  If the self-intersection is between two components, one resolution attaches the two components at a basepoint with coherent orientation, and the other resolution attaches them in reverse orientation.
	
	\begin{figure}[h!]
		\centering
		\begin{overpic}[width=0.95\linewidth]{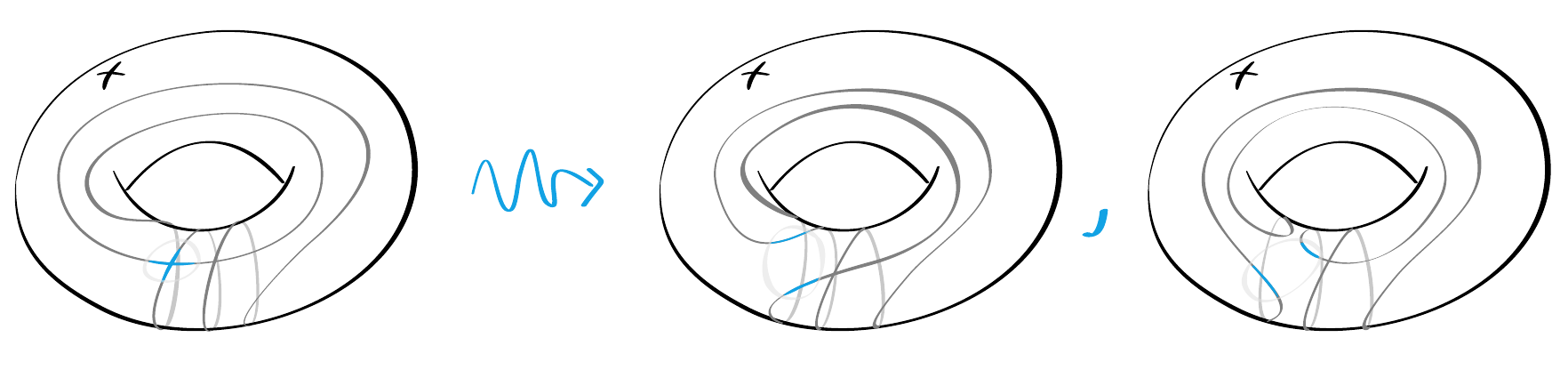}
			\put(18,0){$\{\llbracket \mathtt{a}^3\mathtt{b}^2\rrbracket\}$}
			\put(55,0){$\{\llbracket \mathtt{ab}\rrbracket,\llbracket \mathtt{a}^2\mathtt{b}\rrbracket\}$}
			\put(90,0){$\{\llbracket \mathtt{a}^2\mathtt{bAB}\rrbracket\}$}
		\end{overpic}
		\caption{The two resolution maps on a self-intersection.}
		\label{fig:resolution2}
	\end{figure}
	
	Now extend the definition of the intersection point to a $\mathrm{Mod}(T)$ orbit. For a given multicurve $\gamma\in\mathcal{M}(T)$ and self-intersection point $[p]$, given $f\in\mathrm{Mod}(T)$, define $f(\gamma,[p])=[(\tilde f\circ c, \tilde{f}(p))]$, where $\tilde{f}\in\mathrm{Homeo}^+(T)$ is a representative of $f$ and $(c,p)$ is a representative of $(\gamma,[p])$.
	
	\begin{lemma}\label{lem:equiv}
		Let $(\gamma,[p])$ be a multicurve and an intersection point and $r$ a fixed resolution of the pair. Then, $r$ is equivariant with respect the $\mathrm{Mod}(T)$-action. That is, for any $f\in\mathrm{Mod}(T)$, 
		\[
		f(r(\gamma,[p]))=r(f(\gamma,[p])).
		\]
	\end{lemma}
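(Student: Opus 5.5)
The plan is to reduce the statement to the fact that a homeomorphism carries every ingredient of the construction of $r$ to the corresponding ingredient for the image. Fix a representative $(c,p)$ of $(\gamma,[p])$, with $c:(S^1)^k\to T$ a multiloop in minimal position and $p$ a simple self-intersection point. Fix also a representative $\tilde f\in\mathrm{Homeo}^+(T)$ of $f$. By definition, $f(\gamma,[p])$ is represented by $(\tilde f\circ c,\tilde f(p))$, so the proof splits into three steps.

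\textbf{Step 1: the image pair is admissible.} First we check that $(\tilde f\circ c,\tilde f(p))$ is a legitimate representative, so that $r$ may be computed on it. A homeomorphism preserves the number of intersection points and their simplicity. The minimal number of intersection points depends only on the free homotopy class, and $\tilde f$ induces a bijection on free homotopy classes of multiloops with inverse induced by $\tilde f^{-1}$. Hence $\tilde f\circ c$ is again in minimal position and $\tilde f(p)$ is a simple self-intersection point of it.

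\textbf{Step 2: smoothing commutes with $\tilde f$.} The local surgery at $p$ is performed in a small disk $D$ around $p$ in which $c$ looks like two transverse arcs. Applying $\tilde f$, the disk $\tilde f(D)$ contains the two arcs of $\tilde f\circ c$. The surgery replaces the crossing by one of the two pairings of the four endpoints on $\partial D$, and $\tilde f$ maps each pairing to a pairing. It then suffices to show that $\tilde f$ sends the orientation-preserving smoothing to the orientation-preserving one, and likewise for the orientation-reversing one. This holds because the two smoothings are distinguished by whether the pairing is compatible with the orientations of the strands. That is a combinatorial property of the endpoint pairing, and it is transported by $\tilde f$ once the strand orientations are pushed forward. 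Since $\tilde f$ is orientation-preserving, it also preserves the cyclic order on $\partial D$, so no ambiguity arises. Therefore $\tilde f\circ r(c,p)=r(\tilde f\circ c,\tilde f(p))$ as multiloops, and passing to free homotopy classes gives $f(r(\gamma,[p]))=r(f(\gamma,[p]))$, up to the next step.

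\textbf{Step 3: well-definedness.} This is the main obstacle: we must show that $r(\gamma,[p])$ does not depend on the chosen representative $(c,p)$ within the class $(\gamma,[p])$, and that $f(\gamma,[p])$ does not depend on the choice of $\tilde f$. For the second point, two representatives of $f$ are isotopic. Composing $c$ with that isotopy gives a homotopy in the sense of the definition of $([p],\gamma)$, so both representatives determine the same class. For the first point, I would invoke the standard fact, as in \cite{Turaev1991}, that two minimal-position representatives are related by an ambient isotopy. This is where the minimal-position hypothesis is needed, since it rules out bigon and monogon moves. The ambient isotopy then carries the marked point along and carries the smoothing along, so the resulting multicurves are freely homotopic. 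Granting that $r$ is well defined, which the paper implicitly assumes, Steps 1 and 2 complete the proof.
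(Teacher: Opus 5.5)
Your route differs from the paper's. The paper realizes the smoothing as a central splitting of the train track induced by a minimal representative and uses a carrying map homotopic to the identity. You instead argue by transport of structure along a representative $\tilde f$. Steps 1 and 2 are sound.

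The problem is Step 3, which you correctly single out as the crux. The ``standard fact'' that two minimal-position representatives of the same multicurve differ by an ambient isotopy is false in general. Minimal position rules out monogons and bigons, but not triangles. Homotopic minimal representatives are in general related only by isotopies \emph{together with} triangle (Reidemeister~III) moves (Hass--Scott, de~Graaf--Schrijver), and these moves change the isotopy class. This already happens on $T$. Take $\{\llbracket\mathtt a\rrbracket,\llbracket\mathtt b\rrbracket,\llbracket\mathtt{ab}\rrbracket\}$ in minimal position: each pair meets once, and the complement consists of two triangles and a hexagon, one of which contains the puncture. Flipping an unpunctured triangle moves the puncture from the hexagon into a triangle. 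Both configurations are minimal, but they are not isotopic. So the independence of $r(\gamma,[p])$ from the chosen representative $(c,p)$ is true, but your argument does not establish it.

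There are two ways to repair this. First, you can check that smoothing at the tracked crossing commutes with a triangle move up to free homotopy. If $p$ is a vertex of the triangle, realize the move by sliding the strand that does not pass through $p$; after smoothing at $p$, that slide is still a homotopy of the smoothed multiloop. Second, and more simply, you can remove minimal position from the definition altogether. For any multiloop with a marked double point $c(s_1)=c(s_2)=p$, cut at $s_1,s_2$ and reconnect. Concretely, with the based loops $u,v$ at $p$, take $\{\llbracket u\rrbracket,\llbracket v\rrbracket\}$ and $\llbracket uv^{-1}\rrbracket$ for a self-crossing, or $\llbracket uv\rrbracket$ and $\llbracket uv^{-1}\rrbracket$ for two components. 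This is how the paper itself describes resolutions combinatorially. The output varies continuously along any homotopy that tracks the double point, which is what the class $([p],\gamma)$ is meant to record. So well-definedness is immediate, and equivariance reduces to $\tilde f_*\colon\pi_1(T,p)\to\pi_1(T,\tilde f(p))$ being a homomorphism.

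Note also that pushing forward the strand orientations in Step 2 is essential rather than cosmetic. At a crossing between two distinct components, the labels `orientation-preserving/reversing' depend on orienting the components, and for unoriented multicurves the identity can fail. For example, $S$ preserves $\{\llbracket\mathtt a\rrbracket,\llbracket\mathtt b\rrbracket\}$ and its unique crossing, but sends $\llbracket\mathtt{ab}\rrbracket$ to $\llbracket\mathtt{bA}\rrbracket=\llbracket\mathtt{aB}\rrbracket$, swapping the two smoothings. So the statement should be read for oriented multicurves; for self-crossings of a single component the labels are orientation-independent, so no such issue arises there.
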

	
	\begin{proof}
		The main ingredient is that every resolution move is a Whitehead-type move on a train-track.
		Fix a multicurve $\gamma$.
		
		Let $\tau$ be the train-track induced on the surface by a minimal representative of $\gamma$. Let $\tau'$ be the train-track induced on the surface by a minimal representative of $r(\gamma,p)$. 
		The sequence of Whitehead-type moves from $\tau'$ to $\tau$ is a central splitting in the notation of \cite[Section~3.13]{Mos03}, for which $\tau$ carries $\tau'$ and hence there is a carrying map $(T,\tau')\to (T,\tau)$ from $\tau'$ into $\tau$ \cite[Section~3.5]{Mos03}. This map is homotopic to the identity.
	\end{proof}
	
	Now, by the above lemma, the resolution maps can be extended to the entire mapping class orbit of any multicurve $\gamma$:
	
	\[r_{op},r_{or}:\mathrm{Mod}(T)\cdot \gamma\to \mathrm{Mod}(T)\cdot r_{op}(\gamma)\cup \mathrm{Mod}(T)\cdot r_{or}(\gamma).\]
	
	\paragraph{Linking pairs} For a modern reference on intersection and linking pairs follow \cite{Cha2004}. The following procedure goes back to Cohen and Lustig in \cite{CL87}. We will refer to it as the Cohen-Lustig algorithm for finding intersections. Define the following lexicographic ordering. Start with the cyclic ordering $\mathtt{a}<\mathtt{b}<\mathtt{A}<\mathtt{B}$. In order to compare two words $\omega,\delta$ in $\{\mathtt{a},\mathtt{b},\mathtt{A},\mathtt{B}\}$, write $\omega=\omega_1\cdots\omega_n$ and $\delta=\delta_1\cdots \delta_m$ with $\omega_i$ and $\delta_i$ letters. Consider the infinite powers by concatenation $\omega^\infty,\delta^\infty$. We say that $\omega<\delta$ if either $\omega_1<\delta_1$, or $\omega_i=\delta_i$ for all $i=1,\hdots, k-1$ for some $k>1$ and $\omega_k<\delta_k$ at the cyclic shift of the initial ordering such that it starts with $\omega_{k-1}^{-1}$.
	
	Let $\omega=\omega_1\cdots \omega_n$ be a word. Denote by $\omega^i=\omega_{i}\omega_{i+1}\cdots \omega_{i-1}$ the cyclic shift of $\omega$. A \emph{linking pair} inside the word is a pair of integers $(i,j)_\omega$ such that, up to cyclic shift and taking inverses at one word,
	\[
	\omega^i<\omega^j<{(\omega^i)}^{-1}<{(\omega^j)}^{-1}.
	\]
	We can similarly define a linking pair between two different words $\omega,\delta$. Write $\omega=\omega_1\cdots\omega_n$ and $\delta=\delta_1\cdots\delta_n$. A \emph{linking pair} between these words is a pair of integers $(i,j)_{\omega,\delta}$ such that, up to cyclic shift and taking inverses at one word,
	\[
	\omega^i<\delta^j<(\omega^i)^{-1}<(\delta^{j})^{-1}.
	\]
	
	Now consider equivalence classes of linking pairs by $(i,j)_{\omega,\delta}\sim (i+1,j+1)_{\omega,\delta}$ if $\omega_i=\delta_j$ and $(i+1,j)_{\omega,\delta}\sim (i,j+1)_{\omega,\delta}$ if $\omega_i=\delta_j^{-1}$, and an analogous definition if it is between the same word, by exchanging $\delta$ for $\omega$. By abuse of notation we will call equivalence classes of linking pairs also linking pairs, and, if not saying the contrary explicitly, we will be always referring to equivalence classes.
	
	Also abusing notation, we may refer to a linking pair as a pair of words $(\omega,\delta)$ whenever $(1,1)_{\omega,\delta}$ is a linking pair.
	
	\paragraph{Combinatorics of resolutions} It is well-known that, for primitive curves, equivalence classes of linking pairs correspond to intersection points (self-intersection if considering only one word, intersection if considering two), of the curves represented by the words (see e.g. \cite{DL17}). This correspondence can be seen as, given an intersection point between two curves, e.g. $\alpha,\beta\in\mathcal{C}(T)$, fix the intersection point as a basepoint in the fundamental group. The representatives of $\alpha$ and $\beta$ found in this way correspond to a linking pair $(u,v)$ algebraically with $\llbracket u \rrbracket=\alpha$ and $\llbracket v \rrbracket=\beta$. On the other hand, the two resolutions of this intersection point correspond to concatenating $\alpha$ and $\beta$ with the same orientation or with inverse orientations at the given basepoints, i.e. the $u$ and $v$ words in represent $\alpha$ and $\beta$ with the given basepoint, then the two resolutions are $\llbracket uv\rrbracket,\llbracket uv^{-1}\rrbracket\in\mathcal{C}(T)$, as concatenations of words.
	
	Analogously, a self-intersection point corresponds to a linking pair given by the two permutations of the word representing the curve via fixing the basepoint at the intersection. In a similar fashion, if the two permutations of the word are $\omega_1\cdots \omega_n\in\llbracket \omega\rrbracket$ and $\omega_i\cdots \omega_{i-1}\in\llbracket \omega\rrbracket$, then we can write the curve as $uv$, with $u=\omega_1\cdots \omega_{i-1}$ and $v=\omega_i\cdots \omega_n$, and the two resolutions correspond to $\{u,v\},\{uv^{-1}\}\in\mathcal{M}(T)$.
	
	Finally, if a curve is not primitive, the orientation-preserving resolution corresponds to splitting the curve separating some of its powers, and the orientation-reversing to cancelling out some of its powers. Note that this is coherent with the self-intersection resolutions.
	
	\begin{lemma}\label{lem:max}
		Given $(\gamma,[p])$ a multicurve and an intersection point, the two possible resolutions $r_1,r_2$ satisfy
		\[
		\ell(\gamma)=\max\{\ell(r_1(\gamma,p)), \ell(r_2(\gamma,p))\}.
		\]
	\end{lemma}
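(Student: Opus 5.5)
We read $\ell$ of a multicurve as the sum of the word lengths of its components; this is the only reading under which the statement is meaningful. The plan is to work entirely with the linking-pair description of intersection points and resolutions given in the preceding paragraphs. There are two cases. If $[p]$ is an intersection between two components $\alpha,\beta$, basing at $p$ gives words $u,v$ with $\llbracket u\rrbracket=\alpha$, $\llbracket v\rrbracket=\beta$, and the two resolutions are $\llbracket uv\rrbracket$ and $\llbracket uv^{-1}\rrbracket$. If $[p]$ is a self-intersection of a component $\llbracket uv\rrbracket$, the resolutions are $\{\llbracket u\rrbracket,\llbracket v\rrbracket\}$ and $\llbracket uv^{-1}\rrbracket$. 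All other components are untouched and contribute equally to both sides, so it suffices to treat the component(s) through $p$.

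The upper bound is immediate. In either case both resolutions are represented by words of total length $|u|+|v|$, and cyclic reduction can only shorten them. If $u$ and $v$ are chosen as cyclic shifts of cyclically reduced representatives, then $|u|+|v|$ equals the length of the original component(s). Hence $\ell(r_i(\gamma,p))\le\ell(\gamma)$ for $i=1,2$.

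The content is the reverse inequality: one of the two resolution words must already be cyclically reduced, with no cancellation. I would use the freedom in the equivalence class of the linking pair, $(i,j)\sim(i+1,j+1)$ when $\omega_i=\delta_j$ and $(i+1,j)\sim(i,j+1)$ when $\omega_i=\delta_j^{-1}$, to slide the basepoint along the common segment shared by the two strands near $p$. The aim is a representative $(u,v)$ with $\mathrm{first}(u)\ne\mathrm{first}(v)$ and $\mathrm{last}(u)\ne\mathrm{last}(v)$ (after choosing orientations appropriately), or the symmetric variant with inverses. Such a representative exists because the common segment is finite. This holds since $u$ and $v$ are not powers of a common element, which is precisely what makes $(\omega^i,\delta^j)$ a genuine linking pair and not an equality. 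The strict interleaving $\omega^i<\delta^j<(\omega^i)^{-1}<(\delta^j)^{-1}$ in the cyclic order then says that the four germs at the basepoint alternate. From this one checks the junction letters directly. For example, in $uv^{-1}$ the junctions are $\mathrm{last}(u)\,\mathrm{last}(v)^{-1}$ and $\mathrm{first}(v)^{-1}\mathrm{first}(u)$, and neither cancels when the last letters differ and the first letters differ. So the resolution compatible with the alternation is cyclically reduced of length $|u|+|v|$. In the self-intersection case with the opposite orientation convention, the same argument shows instead that $u$ and $v$ are each cyclically reduced. In that case we use that a cut of a cyclically reduced cyclic word at a diverging position yields cyclically reduced pieces: the junction letters are exactly the diverging pairs.

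Finally, for non-primitive components, the orientation-preserving resolution just splits $\omega^k$ into $\omega^{a}$ and $\omega^{k-a}$, whose lengths add to $k\,\ell(\omega)$, so equality is automatic. The main obstacle is the middle step: translating the cyclic-order inequalities of the Cohen--Lustig ordering, where each comparison is made in the cyclic order restarted at $\omega_{k-1}^{-1}$, into the concrete statement about which junction letters differ. I would first try the case where the common segment is empty, and reduce the general case to it by the sliding equivalence.
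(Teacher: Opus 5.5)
Your outline matches the paper's: the same two cases, the same description of the resolutions ($\{u,v\}$ versus $uv^{-1}$ for a self-intersection, $uv$ versus $uv^{-1}$ for two components), and the same trivial upper bound. But you leave the step that carries all the content unproved, and the mechanism you propose for it is not the right one.

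Fix an orientation in which the two strands through $p$ run together along a common segment of length $k\geq 1$. At a basepoint inside the segment, both $\mathrm{first}(u)=\mathrm{first}(v)$ and $\mathrm{last}(u)=\mathrm{last}(v)$. At its start the first letters agree, and at its end the last letters agree. So no member of the linking-pair class satisfies both of your inequalities, however short the segment is. Finiteness of the common segment therefore does not produce the good representative, and $uv^{-1}$ cancels at every basepoint in the class. What rescues you is reversing $v$, i.e.\ using the other resolution. The reason this works is not the Cohen--Lustig alternation: alternation is what makes $p$ an essential crossing, and at a transverse crossing with four distinct germs both resolutions are reduced anyway. The reason is simply that $u$ and $v$ are themselves cyclically reduced. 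Your phrase ``after choosing orientations appropriately, or the symmetric variant with inverses'' covers exactly this case without giving an argument for it.

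The missing idea is that no sliding or ordering is needed, because the dichotomy holds at every cut. Since $u$ and $v$ are reduced, only the two junctions can cancel.
\begin{itemize}
\item In the two-component case, $uv^{-1}$ can cancel only if $\mathrm{last}(u)=\mathrm{last}(v)$ or $\mathrm{first}(u)=\mathrm{first}(v)$. Likewise $uv$ can cancel only if $\mathrm{last}(u)=\mathrm{first}(v)^{-1}$ or $\mathrm{last}(v)=\mathrm{first}(u)^{-1}$. Combining any condition from the first pair with any from the second forces $\mathrm{first}(u)=\mathrm{last}(u)^{-1}$ or $\mathrm{first}(v)=\mathrm{last}(v)^{-1}$, which contradicts cyclic reducedness.
\item In the self-intersection case, $w=uv$ is cyclically reduced. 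Suppose, say, $u=xu'x^{-1}$ is not cyclically reduced. A cancellation in $uv^{-1}$ then means $\mathrm{first}(v)=x$ or $\mathrm{last}(v)=x^{-1}$, and either is exactly a cancellation in $w=uv$. Hence either $u$ and $v$ are both cyclically reduced, or $uv^{-1}$ is.
\end{itemize}
This is the paper's proof: a four-case junction check that uses nothing about linking pairs. It also applies verbatim to non-primitive components, so your separate treatment of $\omega^k$ is unnecessary.
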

	
	\begin{proof}
		Resolving an intersection falls into one of the following cases. 
		\begin{enumerate}
			\item If the self-intersection is in a unique component $\gamma=\llbracket w\rrbracket$, where $w$ is a cyclically reduced word, then the two possible resolutions act as follows: there is a split of the word into two subwords $w=uv$ such that the orientation-preserving resolution transforms $\gamma$ into the multicurve $\{\llbracket u\rrbracket,\llbracket v\rrbracket\}$, and the non-orientation-preserving resolution transforms $\gamma$ into a curve $\llbracket uv^{-1}\rrbracket$. The lemma in this case follows from: if $uv$ is a cyclically reduced word, then either both $u$ and $v$ are cyclically reduced, or $uv^{-1}$ is cyclically reduced. This is a standard fact in combinatorics, let us prove it. Assume without loss of generality that $u$ is not cyclically reduced, hence, $u=xu'x^{-1}$ for some letter $x$, and write $v=y_1v'y_2$ for some letters $y_1,y_2$. Then $uv^{-1}=xu'x^{-1}y_2^{-1}v'^{-1}y_1^{-1}$ is not cyclically reduced if and only if either $x=y_1$ or $x=y_2$, which in both cases implies that $w=xu'x^{-1}y_1v'y_2$ is not cyclically reduced, finding a contradiction.
			
			\item If the intersection is between two components $\{\llbracket u\rrbracket,\llbracket v\rrbracket\}$ of the multicurve, $u,v$ being cyclically reduced words, the resolution of the intersection maps the multicurve to either $\llbracket uv\rrbracket$ or $\llbracket uv^{-1}\rrbracket$. Once again, the lemma follows by elementary combinatorics. If two words $u,v$ are cyclically reduced, so is either $uv$ or $uv^{-1}$. We will prove it now. 
			Assume $uv$ is not reduced. Assume without loss of generality that the cancellation appears at the border, i.e. $u=xu'$ and $v=v'x^{-1}$ for some letter $x$. Then $uv^{-1}=xu'xv'^{-1}$, which is cyclically reduced because both $u$ and $v$ are.
		\end{enumerate}
		The above shows that the intersection always has one resolution corresponding to reordering the letters of a representative with no cancellations appearing, and hence the resolution does not lose any length.
	\end{proof}
	
	We will need one last ingredient to prove the resolution regularity from Theorem \ref{thm:resolutions}. We have to go back to the coefficients from Lemma \ref{lem:cttcanc}, and prove an extra property.
	
	\begin{lemma}\label{lem:ceven} Let $w$ be a stable word. In the notation of Lemma \ref{lem:cttcanc}, the constant $c_{int}$ is even. 
	\end{lemma}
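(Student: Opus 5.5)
The plan is to reduce the parity of $c_{int}(w)$ to counting sign changes in a cyclic sequence of signs. By Lemma \ref{lem:cttcanc}, $c_{int}(w)=c(\tilde M w)$ for any single $M\in\langle L,R\rangle_+\setminus(\langle L\rangle_+\cup\langle R\rangle_+)$, so it suffices to fix one such $M$. Since each $M(w_i)$ is monotone and the commutator powers $\mathtt{z}^{e_i}$ are reduced, $c(\tilde Mw)$ is exactly the number of letter pairs cancelled when the concatenation $\prod_i \mathtt{z}^{e_i}M(w_i)$ is cyclically reduced. All such cancellations occur at the junctions between a commutator run $\mathtt{z}^{e_i}$ and an adjacent monotone block $M(w_{i-1})$ or $M(w_i)$. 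So it is enough to count junction cancellations.

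\textbf{Step 1 (boundary letters in the interior).} Using the first/last-letter table in the proof of Lemma \ref{lem:cttcanc}, I will show the following. If $M$ involves both $L$ and $R$, then for a positive monotone $w_i$ the word $M(w_i)$ starts with $\mathtt{a}$ and ends with $\mathtt{b}$, and for a negative one it starts with $\mathtt{B}$ and ends with $\mathtt{A}$. The reason is that once $L$ has been applied the first letter becomes $\mathtt{a}$ (resp.\ $\mathtt{B}$), and both $L$ and $R$ preserve it. Symmetrically, once $R$ has been applied the last letter becomes $\mathtt{b}$ (resp.\ $\mathtt{A}$) and stays so. The letters in between contain both $\mathtt{a}$ and $\mathtt{b}$ (resp.\ their inverses).

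\textbf{Step 2 (junction rule).} Recall $\mathtt{z}=\mathtt{aBAb}$ and $\mathtt{Z}=\mathtt{BabA}$. Give $\mathtt{z}$ sign $+$, $\mathtt{Z}$ sign $-$, and give each block the sign of its monotonicity. Checking the eight junctions directly gives the following rule.
\begin{itemize}
\item A junction cancels exactly one pair when the run and the block have opposite signs. For example, $\cdots\mathtt{b}\,|\,\mathtt{B}\cdots$ occurs for a positive block followed by $\mathtt{Z}$, and $\mathtt{A}\,|\,\mathtt{a}\cdots$ for $\mathtt{Z}$ followed by a positive block.
\item A junction cancels nothing when the signs agree.
\end{itemize}
I must also rule out cascading cancellations. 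After one cancellation, the next exposed letters are, for instance, $\mathtt{a}$ of $\mathtt{Z}$ against a letter $\mathtt{a}$ or $\mathtt{b}$ of the block, which do not cancel. The potentially delicate cases are $|e_i|=1$ with both ends cancelling, and $n=1$, where the cyclic closure is itself a junction. In these cases the surviving middle $\mathtt{ab}$ (resp.\ $\mathtt{BA}$) still does not cancel further, and the blocks are long enough in the interior. I expect this verification to be the main obstacle, but it is a finite case check.

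\textbf{Step 3 (parity).} Read the word cyclically as the alternating sign sequence $s_1,t_1,s_2,t_2,\dots,s_n,t_n$, where $s_i$ is the sign of the run $\mathtt{z}^{e_i}$ and $t_i$ the sign of the block $w_i$. By Step 2, $c_{int}(w)$ equals the number of adjacent positions in this cyclic sequence with differing signs. The number of sign changes around a cycle is always even. Hence $c_{int}$ is even.
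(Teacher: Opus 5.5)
Your proposal is correct and is essentially the paper's argument. The paper groups the junction cancellations by block: a block between like-signed runs contributes $0$ or $2$, a block between unlike-signed runs contributes exactly $1$, and the latter occurs an even number of times around the cycle. That is your cyclic sign-change count organized per block, and you additionally make explicit the first/last-letter facts and the absence of cascading cancellations, which the paper leaves implicit.

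One small slip in Step 1: for negative blocks the roles of the generators are swapped. $L$ fixes $\mathtt{A}$ and sends $\mathtt{B}\mapsto\mathtt{BA}$, so it is $R$ that forces the first letter to be $\mathtt{B}$ and $L$ that forces the last letter to be $\mathtt{A}$. Since $M$ involves both $L$ and $R$, your conclusion is unaffected.
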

	\begin{proof}
		Look at the table at the proof of Lemma \ref{lem:cttcanc}. It follows directly that for $c_{int}$, at the situations $\mathtt{z}w\mathtt{z}$ and $\mathtt{Z}w\mathtt{Z}$ there is a contribution of $2$ or of $0$ depending on the positivity of $w$. On the other hand, at the situations $\mathtt{z}w\mathtt{Z}$ and $\mathtt{Z}w\mathtt{z}$ there is a contribution of $1$ independently of the positivity, but these situations will happen an even number of times.
	\end{proof}
	
	Let us finally restate Theorem \ref{thm:resolutions} and prove it.
	
	\resolutions*
	
	\begin{proof}
		Let us first recall the construction of $\mathcal{R}$. Starting at an essential curve $\gamma\in\mathcal{C}(T)$, we choose a self-intersection point and resolve it in the two ways, giving rise to two multicurves. We iteratively choose new intersection points and resolve them until we get at most $2^{i(\gamma)}$ resolutions to simple multicurves. We call a \emph{complete list of full resolutions} of $\gamma$ a finite set $\mathcal{R}$ obtained in this way in which, for any $r\in \mathcal{R}$, $r:\mathrm{Mod}(T)\cdot \gamma \to \mathrm{Mod}(T)\cdot r(\gamma)$ is defined equivariantly---i.e. $r(f(\gamma))=f(r(\gamma))$ for any $f\in\mathrm{Mod}(T)$. 
		Fix such a complete list of full resolutions $\mathcal{R}$. Note that $\mathcal{R}$ might depend on the choices of which intersections to resolve at each step.
		Denote these resolutions by $\mathcal{R}=\{r_1,\hdots, r_N:\mathrm{Mod}(T)\cdot \gamma\to \mathcal{M}(T)\}$.
		
		Note that for any curve $f(\gamma)\in \mathrm{Mod}(T)$, each resolution is a simple multicurve, hence is of the form $r_i(\gamma)=\{\alpha,\overset{(n_\alpha)}{\hdots},\alpha, [\mathtt{a},\mathtt{b}],\overset{(n_c)}{\hdots},[\mathtt{a},\mathtt{b}]\}$. Then we know the growths \[\ell(r_i(L(\gamma)))=\ell(r_i(\gamma))+n_\alpha|\alpha|_{\mathtt{b}},\]
		and, 
		\[
		\ell(r_i(R(\gamma)))=\ell(r_i(\gamma))+n_\alpha|\alpha|_{\mathtt{a}}.
		\]
		Now, using Lemma \ref{lem:cttcanc}, we can also predict that, if there was a resolution in the list that preserves length on all the interior of a stable subtree, what the multicurve should be at every node so that the growth coincides.\\
		
		More concretely, by the same argument as in the proof of Theorem \ref{thm:mainthm_subtree}, if the root of a stable subtree is $w=\prod_{i=1}^n\mathtt{z}^{e_i}w_i$, then the only possible length-preserving resolution $r\in\{r_1,\hdots, r_N\}$ at the entire interior of the tree has to be such that $r(w)=\{\alpha,\overset{(n_\alpha)}{\hdots},\alpha, [\mathtt{a},\mathtt{b}],\overset{(n_c)}{\hdots},[\mathtt{a},\mathtt{b}]\}$  with
		\begin{enumerate}
			\item 
			
			$|\alpha|_\mathtt{a}\cdot n_\alpha=\sum_{i=1}^n|w_i|_{\mathtt{a},\mathtt{A}}$;
			\item $|\alpha|_\mathtt{b}\cdot n_\alpha=\sum_{i=1}^n|w_i|_{\mathtt{b},\mathtt{B}}$;
			\item $n_\alpha=\gcd(\sum_{i=1}^n|w_i|_{\mathtt{a},\mathtt{A}},\sum_{i=1}^n|w_i|_{\mathtt{b},\mathtt{B}})$; and
			\item $n_c=\sum_{i=1}^n|e_i|-c_{int}/2$.
		\end{enumerate}
		
		We will now prove that there is always such a resolution in $\mathcal{R}$.
		
		The claim follows from finiteness of the list of resolution. Note that by the Lemma \ref{lem:max}, at every curve of the tree at least one resolution of the list is length-preserving. By finiteness of the list, one of them has to be length-preserving on infinitely-many curves in the stable subtree. Such a resolution has to have the same growth as the curves, whose growth is known---see the proof of Theorem \ref{thm:mainthm_subtree}. 
		
		Hence that is a resolution, and moreover, since its growth is the same as the original curve in the stable subtree, it is length-preserving on the entire interior of the stable subtree.
		
		Finally, maximality follows directly by construction. The root of the stable subtree ensures that the growth changes as soon as we apply $L^{-1}$ or $R^{-1}$ on the tree, hence it is not the predicted length which coincides with the resolution growth, and hence the resolution is not length-preserving at a bigger connected subtree.
	\end{proof}

	The above proof gives a very descriptive definition of the coefficients $(P,Q,K)$ at every stable subtree.
	
	\begin{corollary}\label{cor:coeff} Let $\gamma\in\mathcal{C}(T)$ be the root of one stable subtree. Let $r(\gamma)=\{\alpha,\overset{(n_\alpha)}{\hdots},\alpha, [\mathtt{a},\mathtt{b}],$ $,\overset{(n_c)}{\hdots},[\mathtt{a},\mathtt{b}]\}$ be its predominant length-preserving full resolution, where $\alpha$ is a simple curve. Then, in the notation of Theorem \ref{thm:mainthm_subtree},
		\[
		P=n_\alpha|\alpha|_\mathtt{b}, \, Q=n_\alpha|\alpha|_\mathtt{a},\,\text{and } K=n_c.
		\]
	\end{corollary}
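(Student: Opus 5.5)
We plan to read the coefficients off by comparing two linear growth laws along the stable subtree, using the proofs of Theorem \ref{thm:mainthm_subtree} and Theorem \ref{thm:resolutions}. Write the root as a stable word $w=\prod_{i=1}^n\mathtt{z}^{e_i}w_i$. The proof of Theorem \ref{thm:mainthm_subtree} gives the coefficients as $P=\sum_i|w_i|_{\{\mathtt{b},\mathtt{B}\}}$ and $Q=\sum_i|w_i|_{\{\mathtt{a},\mathtt{A}\}}$. The shift $K$ is determined by the interior constant term: $4K=4E-2c_{int}(w)$, where $E=\sum_i|e_i|$. Since $c_{int}$ is even by Lemma \ref{lem:ceven}, this gives $K=E-c_{int}/2\in\ZZ_{\geq0}$. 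Our goal is therefore to show that these three quantities coincide with $n_\alpha|\alpha|_\mathtt{b}$, $n_\alpha|\alpha|_\mathtt{a}$ and $n_c$, respectively.

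The first step is to use Theorem \ref{thm:resolutions}. It provides a resolution $r\in\mathcal{R}$ that is length-preserving on the whole interior of the subtree, i.e.\ on every $M\gamma$ with $M\in\langle L,R\rangle_+\setminus(\langle L\rangle_+\cup\langle R\rangle_+)$. Its value $r(\gamma)$ is a simple multicurve, so on the once-punctured torus it consists of $n_\alpha$ copies of one simple curve $\alpha$ together with $n_c$ copies of the peripheral curve $\llbracket\mathtt{z}\rrbracket$, which has length $4$. By equivariance (Lemma \ref{lem:equiv}), $r(M\gamma)=M(r(\gamma))$. The peripheral class is fixed by $L$ and $R$, and for $M\in\langle L,R\rangle_+$ a simple curve $\alpha$ is monotone with $\ell(M\alpha)$ linear in the entries of $M$. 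Taking $M$ with columns corresponding to a coprime pair $(x,y)$, we expect
\[
\ell(r(M\gamma))=n_\alpha\bigl(|\alpha|_\mathtt{b}\,x+|\alpha|_\mathtt{a}\,y\bigr)+4n_c
\]
(up to the bookkeeping of which column gets which letter, which will be fixed by the growth rules $\ell(r(L\gamma))=\ell(r(\gamma))+n_\alpha|\alpha|_\mathtt{b}$ and its $R$-analogue stated in the proof of Theorem \ref{thm:resolutions}).

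The second step applies the same computation to $M\gamma$ itself, using Lemma \ref{lem:cttcanc} and the monotone growth formulas from the proof of Theorem \ref{thm:mainthm_subtree}. On the interior this gives $\ell(M\gamma)=Px+Qy+4E-2c_{int}$, with the same parametrisation. Length preservation says the two expressions agree for all interior $(x,y)$. Because the interior contains infinitely many independent directions (for example, the $L$-steps and $R$-steps from any interior vertex), we can compare the coefficients of $x$, of $y$, and the constant terms separately. This yields $P=n_\alpha|\alpha|_\mathtt{b}$, $Q=n_\alpha|\alpha|_\mathtt{a}$ and $4n_c=4E-2c_{int}=4K$, as required.

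The main obstacle is making the normalisations consistent, in two places. First, the linear formula for $\ell(r(M\gamma))$ must be expressed in the same parameters $(x,y)$ as the count $\varphi_{P,Q}$, including the root offset: at the root itself $(x,y)$ is not simply $(1,1)$, so we have to check that the constant terms match after the same shift on both sides. Second, the claim $\ell(M\alpha)=|\alpha|_\mathtt{a}\ell(M\mathtt{a})+|\alpha|_\mathtt{b}\ell(M\mathtt{b})$ needs justification for positive $M$. We would handle this by writing $\alpha$ as a positive (monotone) word in $\mathtt{a},\mathtt{b}$ after an $S$-rotation, which is possible since the root lies in $T_\gamma$ with positive monotone pieces. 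After that, no cancellations occur and the length is additive in the letters.
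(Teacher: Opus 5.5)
Your overall strategy is the paper's own. Its one-line proof defers to the proof of Theorem \ref{thm:resolutions}. There the interior length-preserving resolution is pinned down by matching its $L$/$R$ growth with the data $P$, $Q$, $4E-2c_{int}$ of the stable subtree, and the corollary reads off $P$, $Q$ and $K=E-c_{int}/2=n_c$.

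\textbf{The gap.} Your justification of $\ell(M\alpha)=|\alpha|_\mathtt{a}\ell(M\mathtt{a})+|\alpha|_\mathtt{b}\ell(M\mathtt{b})$ fails as written.
\begin{itemize}
\item A simple curve need not be monotone in the paper's sense. $\llbracket\mathtt{aB}\rrbracket$ is simple, yet $L(\mathtt{aB})=\mathtt{aBA}$ cyclically reduces to $\mathtt{B}$. So its length drops instead of growing by $|\alpha|_\mathtt{b}$.
\item Your remedy, passing to an $S$-rotate of $\alpha$, is not available. By equivariance the multicurve at the vertex $M\gamma$ is $M(r(\gamma))$, which contains $M\alpha$, not $MS\alpha$. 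Since $S$ does not commute with $M$, in general $\ell(MS\alpha)\neq\ell(M\alpha)$.
\item The monotone pieces of the root need not be positive. For example, $\llbracket\mathtt{zB}^2\mathtt{A}^3\mathtt{B}^2\mathtt{AB}\rrbracket$ in Example \ref{ex:counting} has a negative piece. In any case, nothing ties the signs of the $w_i$ to $\alpha$.
\end{itemize}

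\textbf{The repair.} Positivity of $\alpha$ follows from the length-preservation hypothesis itself.
\begin{itemize}
\item A simple curve with homology class $\pm v$, $v=(p,q)$, has word length $|p|+|q|$, so $\ell(M\alpha)=\|Mv\|_1$.
\item Suppose $pq<0$; after reversing orientation, $p>0>q$.
\item Take the interior vertices $M=L^jR^k$ with $j,k\geq 1$. Their column sums are $(s_1,s_2)=(1+k(j+1),\,j+1)$, and $Mv=\bigl((1+jk)p+jq,\;kp+q\bigr)$.
\item For $k$ large both entries of $Mv$ are positive, so $\ell(M\alpha)=ps_1+qs_2$.
\item Equate $n_\alpha(ps_1+qs_2)+4n_c$ with $\ell(M\gamma)=Qs_1+Ps_2+4E-2c_{int}$. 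These points lie on at least two horizontal lines, so they are not collinear, and all three affine coefficients must match.
\item In particular $n_\alpha q=P>0$, contradicting $q<0$.
\end{itemize}
Hence $\alpha$ is represented by a positive word up to inversion. Your coefficient comparison then goes through exactly as you describe. The paper's growth formula $\ell(r_i(L(\gamma)))=\ell(r_i(\gamma))+n_\alpha|\alpha|_\mathtt{b}$ tacitly makes the same assumption, so this is a point where you can be more careful than the paper.

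Your worry about a root offset is unfounded. Both lengths are evaluated at the same $M$, and the root $M=I$ has column sums $(1,1)$. The constant terms therefore compare directly.
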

	
	\begin{proof}
		It follows straight from the action of $L$ and $R$, and Lemma~\ref{lem:equiv}. 
	\end{proof}
	
	In the notation of the proof of Theorem \ref{thm:resolutions}, note that the same growth argument from Theorem \ref{thm:resolutions} ensures that there is a constant length-preserving resolution at the $\langle L \rangle\gamma$ and the $\langle R \rangle\gamma$ rays from the root of a stable subtree. That resolution's image at the root is less direct to compute. For $\langle L \rangle \gamma$, writing $\gamma=\llbracket w\rrbracket$ with $w=\prod_{i=1}^n \mathtt{z}^e_i w_i$ stable,
	\begin{enumerate}
		\item $n_\alpha\cdot |\alpha|_\mathtt{b}=\sum |w_i|_{\mathtt{b},\mathtt{B}}$;
		\item $\gcd(|\alpha|_\mathtt{a},|\alpha|_\mathtt{b})=1$; and
		\item  $\ell(\gamma)=n_\alpha\cdot (|\alpha|_\mathtt{a}+|\alpha|_\mathtt{b})+4n_c$.
	\end{enumerate}
	The stable resolution is the only $r(\gamma)=\{\alpha,\overset{(n_\alpha)}{\hdots},\alpha, [\mathtt{a},\mathtt{b}],\overset{(n_c)}{\hdots},[\mathtt{a},\mathtt{b}]\}$ satisfying the conditions above. For $\langle R \rangle \gamma$ the first equation changes to $n_\alpha\cdot |\alpha|_\mathtt{a}=\sum |w_i|_{\mathtt{a},\mathtt{A}}$ and the rest stay the same.
	
	\begin{example} Continuing Example \ref{ex:counting}, we can find the length-preserving resolutions $r_1,\hdots ,r_5:\mathrm{Mod}(T)\cdot\gamma \to \mathcal{M}(T)$ at the interior of each stable subtree from Theorem \ref{thm:resolutions} as in Figure \ref{fig:resolving_trees}.
		\begin{figure}[ht]\centering
			\begin{overpic}[width=\linewidth]{exampletr.pdf}
				\put(15,62){$\llbracket \mathtt{a}^2\mathtt{b}^2\mathtt{A}^2\mathtt{b}^2\mathtt{Ab}\rrbracket$}
				\put(70,62){$\llbracket \mathtt{b}^2\mathtt{A}^2\mathtt{B}^2\mathtt{A}^2\mathtt{BA}\rrbracket$}
				\put(3,46.5){\color{cyan}$\llbracket \mathtt{zB}^2\mathtt{A}^3\mathtt{B}^2\mathtt{AB}\rrbracket$}
				\put(27.5,46.5){$\llbracket \mathtt{BABaZazBzB}\rrbracket$}
				\put(58,46.5){\color{cyan}$\llbracket \mathtt{zbZ(AB)}^2\mathtt{A}^4\rrbracket$}
				\put(82,46.5){\color{cyan}$\llbracket \mathtt{zABAB}^3(\mathtt{AB})^2\mathtt{A}\rrbracket$}
				\put(17,19){\color{cyan}$\llbracket \mathtt{ZazB}^2\mathtt{A}^2\mathtt{BA}^2\mathtt{B}\rrbracket$}
				\put(36,19){\color{cyan}$\llbracket \mathtt{ZbZBAzAzab}^3\rrbracket$}
				\put(4,30){\small$r_1(L(\gamma))=$} \put(1,27){\small$\llbracket \{\mathtt{abbababab}\}\rrbracket$}
				\put(57.5,30){\small$r_4(L(S(\gamma)))=$}
				\put(52,27){\small$\llbracket \{\mathtt{abb},\mathtt{abb},\mathtt{abb},\mathtt{abAB}\}\rrbracket$}
				\put(83,30){\small$r_5(R(S(\gamma)))=$} \put(81,27){\small$\llbracket \{\mathtt{abbabababab}\}\rrbracket$}		
				\put(18,3){\small$r_2(LR(\gamma))=$}
				\put(12,0){\small$\llbracket \{\mathtt{abbababab},\mathtt{abAB}\}\rrbracket$}
				\put(38.5,3){\small$r_3(R^2(\gamma))=$}
				\put(36,0){\small$\llbracket \{\mathtt{abbababb},\mathtt{abAB}\}\rrbracket$}
				
			\end{overpic}\vspace{-8pt}
			\caption{Stable subtrees and interior length-preserving resolutions of $\gamma=\llbracket \mathtt{a^2b^2A^2b^2Ab}\rrbracket$.}
			\label{fig:resolving_trees}
		\end{figure}

	\end{example}
	
	\section{Algorithm}\label{sec:algorithm}
	
	The following algorithm starts from $w$ a cyclic word in $\{\mathtt{a},\mathtt{b},\mathtt{A},\mathtt{B}\}$ and, by using rewriting rules, finds representatives in stable form in finitely-many steps for any $L,R$-path. Then, at every infinite subtree, the counting function might be computed following the proof of Theorem \ref{thm:mainthm_subtree}.
	
	\paragraph{Rewrite rules.}
	We have three types of rewrite rules.
	These are rules that change a word without changing the underlying element of $F_2$.
	All rules are interpreted as replacing a given subword of a cyclic word with another word. The rules (C1)$,\hdots,$(C4) are cancellation moves, the rules (F1),$\hdots,$(F4) are full commutator moves (for all $k\geq 1$), and the rules (P1),$\hdots,$(P4) are partial commutator moves (for all $k\geq 1$).
	
	\[
	\begin{matrix}
		\text{(C1)} & \mathtt{aA}\to\emptyset &\hspace{20pt} & \text{(F1)} & \mathtt{aB}^k \mathtt{A b}\to (\mathtt{zB})^{k-1}\mathtt{z}&\hspace{20pt} & \text{(P1)} & \mathtt{aB}^k \mathtt{A}\to (\mathtt{zB})^k \\
		\text{(C2)} & \mathtt{Aa}\to\emptyset &\hspace{20pt} & \text{(F2)} & \mathtt{Bab}^k \mathtt{A }\to \mathtt{Z}(\mathtt{bZ})^{k-1}&\hspace{20pt} & \text{(P2)} & \mathtt{ab}^k\mathtt{A}\to (\mathtt{bZ})^k \\
		\text{(C3)} & \mathtt{bB}\to\emptyset &\hspace{20pt} & \text{(F3)} &\mathtt{Ba}^k \mathtt{bA}\to (\mathtt{Za})^{k-1}\mathtt{Z} &\hspace{20pt} & \text{(P3)} & \mathtt{Ba}^k \mathtt{b}\to (\mathtt{Z a})^k \\
		\text{(C4)} & \mathtt{Bb}\to\emptyset &\hspace{20pt} & \text{(F4)} & \mathtt{aBA}^k \mathtt{b}\to (\mathtt{zA})^{k-1}\mathtt{z}&\hspace{20pt} & \text{(P4)} & \mathtt{BA}^k \mathtt{b}\to (\mathtt{A z})^k. 
	\end{matrix}
	\]
	
	Moreover, for a cyclically reduced cyclic word $w$, let $L^*(w)$ be the cyclic reduction of $L(w)$, and $R^*(w)$ be the cyclic reduction of $R(w)$.
	Then $R^*(w)$ operates by transforming digrams and monograms according to the following table, where digrams take precedence over monograms.
	\[
	\begin{matrix}
		\mathtt{aB}&\mathtt{bA}&\mathtt{a}&\mathtt{b}&\mathtt{A}&\mathtt{B}&\mathtt{z}&\mathtt{Z}\\
		\downarrow&\downarrow&\downarrow&\downarrow&\downarrow&\downarrow&\downarrow&\downarrow\\
		\mathtt{a}&\mathtt{A}&\mathtt{ab}&\mathtt{b}&\mathtt{BA}&\mathtt{B}&\mathtt{z}&\mathtt{Z}
	\end{matrix}
	\]
	The word resulting from these substitutions is cyclically reduced, and is precisely $R^*(w)$.
	A similar table exists for $L^*(w)$.

	Recall that in our notation $\mathtt{z}=[\mathtt{a},\mathtt{B}]=\mathtt{aBAb}$ and $\mathtt{Z}=\mathtt{B abA}$ and that these two commutators are fixed by $L$ and $R$ in $F_2$. However, during the algorithm, the commutator letters are considered symbolically different by the rewrite rules.
	
	Recall also that a curve is stable if it can be written as $\prod_{i=1}^n\mathtt{z}^{e_i}w_i$ for $e_i\neq0$ and $w_i$ non-empty monotone words.  For the algorithm, we will introduce a weaker version of stable curves. We call \emph{terminal} word a (cyclic) word that does not (cyclically) contain any of the following subwords: $\mathtt{aA}$, $\mathtt{aB}$, $\mathtt{Aa}$, $\mathtt{Ab}$, $\mathtt{bA}$, $\mathtt{bB}$, $\mathtt{Ba}$, $\mathtt{Bb}$. The algorithm is as follows.
	
	\paragraph{The algorithm:} Start with a cyclically reduced word $w$, and with an empty list (a multiset) of terminal words $\mathcal{L}$.
	\begin{enumerate}
		\item\label{step:1} Apply all possible full commutator moves. Call the new word $w'$.
		\item Apply all possible partial commutator moves. Call the new word $w''$.
		\item If $w''$ is terminal, save it in the list $\mathcal{L}$.
		
		Otherwise, compute $L(w'')$ and apply all possible cancellation moves. Call it $L^*(w'')$. Go back to Step \ref{step:1} with the new word $L^*(w'')$. Do the same for $R$.
		\item Start again Step \ref{step:1} with $S(w)$.
        \item For every $w\in\mathcal L$: if $L(\llbracket w \rrbracket)=\llbracket w \rrbracket$, substitute $w\in\mathcal L$ by $R(w)$. Do symmetrically if $R$ stabilizes instead.
        \item Reduce the set such that no curve is represented twice by words in $\mathcal{L}$.
		\item Compute the formula for each word in $\mathcal{L}$. 
	\end{enumerate}
	
	\paragraph{The formula:} For a terminal word $w$, following the proof of Theorem \ref{thm:mainthm_subtree}, the counting formula at the subtree rooted there can be computed as follows. It will follow from Proposition \ref{prop:alg} that the word will be written in the form $w=\prod_{i=1}^n\mathtt{z}^{e_i}w_i$ with $w_i$ non-empty monotone words. Set $P_w=\sum_{i=1}^n|w_i|_{\{\mathtt{b},\mathtt{B}\}}$, $Q_w=\sum_{i=1}^n|w_i|_{\{\mathtt{a},\mathtt{A}\}}$, and $E_w=\sum_{i=1}^n |e_i|$. Set also $c_{int}(w)$, $c_{L}(w)$ and $c_{R}(w)$ as in Lemma \ref{lem:cttcanc}. Then, define
	{\small\begin{align*}&\Phi_w(L):=\varphi_{P_w,Q_w}(L-4E_w+2c_{int}(w))\\&-[0,\overset{(4E_w-2c_{int}(w))}{\hdots},0,1,0,\hdots,0]\ (L\ \mathrm{mod} \ P_w)+[0,\overset{(4E_w-2c_{R}(w))}{\hdots},0,1,0,\hdots,0]\ (L\ \mathrm{mod} \ P_w) \\
			&-[0,\overset{(4E_w-2c_{int}(w))}{\hdots},0,1,0,\hdots,0]\ (L\ \mathrm{mod} \ Q_w)+[0,\overset{(4E_w-2c_{L}(w))}{\hdots},0,1,0,\hdots,0]\ (L\ \mathrm{mod} \ Q_w).
	\end{align*}}
	Set finally $m:=2$ if $\gamma$ has non-trivial torsion elements in the stabilizer, and $m:=4$ otherwise. Then, Theorem \ref{thm:mainthm} can be written as follows. Let $w$ be a cyclically reduced word in $\{\mathtt{a},\mathtt{b},\mathtt{A},\mathtt{B}\}$ in standard position such that $\ell(\llbracket w\rrbracket)$ is minimal in $\mathrm{Mod}(T)\cdot \llbracket w\rrbracket$. Let $\mathcal{L}$ the list of terminal words coming from the algorithm. Then, 
	\[
	\#\{\alpha\in\mathrm{Mod}(T)\cdot \llbracket w\rrbracket\mid \ell(\alpha)=L\}=m\sum_{v\in\mathcal{L}}\Phi_v(L),
	\]
	for all $L\geq L_0$ with $L_0=\max_{v\in\mathcal{L}}(\{\ell(\llbracket v\rrbracket)\}\cup\{\ell(\llbracket w\rrbracket)+1\})$.

	\hfill
	
	We will now prove that the algorithm works.
	\begin{proposition}\label{prop:alg} For any cyclic word $w$,
		\begin{enumerate}
			\item \label{prop:algorithm}the algorithm terminates in finite time,
			\item \label{nopanic} a terminal word contains no $\mathtt{zZ},\mathtt{Zz}$ subwords.
		\end{enumerate}
	\end{proposition}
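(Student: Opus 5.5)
For part \ref{prop:algorithm}, I would reduce to Proposition \ref{prop:nonstable} via the complexity argument in its proof. The algorithm explores the two binary trees $T_{\llbracket w\rrbracket}$ and $T_{S(\llbracket w\rrbracket)}$ and stops along a branch once the current word is terminal. So it suffices to show two things: every branch reaches a terminal word after finitely many $L^*,R^*$ steps, and Steps 1--2 themselves terminate.

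For Steps 1--2, I would use the quantity $\eta$ from the proof of Lemma \ref{lem:admissible}. Each full or partial commutator move removes a subword $\mathtt{aB}^k\mathtt{A}$, $\mathtt{ab}^k\mathtt{A}$, $\mathtt{Ba}^k\mathtt{b}$ or $\mathtt{BA}^k\mathtt{b}$. It does so without creating new ones, so $\eta$ strictly decreases and only finitely many moves apply. The full moves (F1)--(F4) are just (P1)--(P4) followed by cancelling the adjacent $\mathtt{B}\mathtt{b}$-type pair, so the same count works for them.

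For the branches, I would first check that a word with no commutator move left and containing none of the forbidden digrams $\mathtt{aB},\mathtt{Ab},\mathtt{bA},\mathtt{Ba}$ (and no cancellations) is exactly of the form $\prod \mathtt{z}^{e_i}w_i$ with each $w_i$ monotone. This holds because between two commutator letters only same-sign adjacencies survive. Hence terminal words are stable words. I would then define the complexity $c$ as in the proof of Proposition \ref{prop:nonstable}: the total length of the non-monotone ($\mathtt{z}$-free) segments. The Claim there shows that $L^*$ and $R^*$, followed by the rewritings, strictly decrease $c$ while $c>0$. The tables for $R^*$ and $L^*$ show the letter counts $|\cdot|_\mathtt{a}$ and $|\cdot|_\mathtt{B}$ behave as in that Claim, and the commutator letters $\mathtt{z},\mathtt{Z}$ are untouched. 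So along any branch $c$ reaches $0$ within at most $\ell(w)$ steps, at which point the word is terminal. König's lemma, or simply the depth bound, then gives a finite explored tree. Steps 5--7 act on a finite list, so they are finite too.

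For part \ref{nopanic}, I would argue by induction along the algorithm that no $\mathtt{zZ}$ or $\mathtt{Zz}$ ever appears. The initial word has no commutator letters. The $L^*$/$R^*$ substitutions fix $\mathtt{z},\mathtt{Z}$ and only rewrite the letters between them, so they cannot make two commutator letters adjacent unless the segment between them becomes empty. That cannot happen, because the images of letters under the substitution table are nonempty, and a digram $\mathtt{aB}\mapsto\mathtt{a}$ still leaves a letter. The cancellation moves only remove pairs inside $\{\mathtt{a},\mathtt{b},\mathtt{A},\mathtt{B}\}$-segments, and such a segment between commutators would have to cancel completely. I would exclude this using that the segments were reduced before the substitution and that $L,R$ are injective on reduced words modulo the boundary digram effects. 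For the commutator moves, I would repeat the argument of Lemma \ref{lem:admissible}. The output of each move $(\mathtt{zB})^k$, $(\mathtt{bZ})^k$, $(\mathtt{Za})^k$, $(\mathtt{Az})^k$, and of the full versions, places a nonempty letter on the side of the new commutator facing its neighbour. The exception is the boundary letter of the full moves, and there I would check the four cases by hand: a $\mathtt{Z}$ adjacent to the produced $\mathtt{z}$ would require the original word to contain $\mathtt{Ab}\,\mathtt{Bab}$-type cancellations, contradicting reducedness.

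The main obstacle I expect is this last case analysis: ruling out that a segment between two commutator letters is entirely consumed, whether by the full moves or by cancellation after $L^*$/$R^*$.
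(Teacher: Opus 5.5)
Your argument has two genuine gaps: the branch termination in part 1 and the invariant in part 2. Your handling of Steps 1--2 is fine.

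\textbf{Part 1.} The complexity $c$ from the proof of Proposition~\ref{prop:nonstable} is shown to drop only for the re-pushing used there. That re-pushing relies on moves of type (\ref{eq:rightrewr}), such as $\mathtt{ba}^k\mathtt{B}\mapsto\mathtt{ba}^{k-1}\mathtt{zBa}$. The algorithm never performs those; it only uses the full and partial moves, which are of type (\ref{eq:leftrewr}). Measured on the words the algorithm actually produces, $c$ can increase. Start from the cyclically reduced word $\mathtt{aBAbaaaBB}$. Move (F1) gives the branch word $\mathtt{z}\mathtt{a}^3\mathtt{B}^2$, with $c=5$. Then $R^*$ gives $\mathtt{z}\mathtt{ababaB}$, which admits no full or partial move and is a single mixed segment of length $6$. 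So the bound ``$c$ reaches $0$ within $\ell(w)$ steps'' fails. Reading $c$ as the minimum over admissible pushes does not rescue it: $c=0$ would only say that \emph{some} representative is stable, not that the algorithm's word is terminal.

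What is missing is a complexity adapted to the moves the algorithm uses. Weight each $\mathtt{aB}$ by the length of the $\mathtt{B}$-run after it, each $\mathtt{bA}$ by the $\mathtt{b}$-run before it, and symmetrically for $\mathtt{Ba}$ and $\mathtt{Ab}$; this is the paper's $\mu$. Commutator moves strictly lower it. On a branch word, $R^*$ shortens every $\mathtt{aB}^k$ and $\mathtt{b}^k\mathtt{A}$ block by one and creates no new ones. The other possible origin, $\mathtt{aB}^{k-1}\mathtt{A}$, would have admitted (F1) or (P1). It also does not increase the $\mathtt{Ba}$/$\mathtt{Ab}$ part. If nothing dropped, the word would already have been terminal. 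In the example above, $\mu$ goes $2\to1\to0$.

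\textbf{Part 2.} ``No $\mathtt{zZ}$, $\mathtt{Zz}$'' is not inductive on its own. It is also false that only the full moves have a boundary issue: every partial move outputs a commutator letter directly against an outside neighbour. For example, (P1) sends $\mathtt{Z}\,\mathtt{aB}^k\mathtt{A}$ to $\mathtt{Z}(\mathtt{zB})^k$, and (P2) sends $\mathtt{ab}^k\mathtt{A}\,\mathtt{z}$ to $(\mathtt{bZ})^k\mathtt{z}$.

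You cannot rule these out, or the full-move cases, by reducedness. The algorithm's words are reduced only as words in $\mathtt{a},\mathtt{b},\mathtt{z},\mathtt{A},\mathtt{B},\mathtt{Z}$. Once $\mathtt{z},\mathtt{Z}$ are expanded they are generally not reduced in $F_2$; already $(\mathtt{zB})^k$ expands to $\mathtt{aBAbB}\cdots$. So a symbolic $\mathtt{ZaB}$ or $\mathtt{AbZ}$ contradicts nothing.

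The fix is to strengthen the invariant so it also forbids $\mathtt{zBa}$, $\mathtt{ZaB}$, $\mathtt{bAz}$, $\mathtt{AbZ}$. These are exactly the contexts in which a move would create $\mathtt{zZ}$ or $\mathtt{Zz}$. You then check that this larger set of forbidden patterns is preserved by three operations:
\begin{itemize}
\item full moves;
\item partial moves applied when no full move is available (this is where you use that no full move was possible);
\item $L^*$ and $R^*$ on branch words. For instance, a $\mathtt{ZaB}$ in $R^*(w)$ can only come from a $\mathtt{ZaB}$ in $w$.
\end{itemize}
Your observation that $L^*,R^*$ never empty a segment is correct and covers the $\mathtt{zZ}$ part of that last check.
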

	
	We will start by proving the termination. Note that the second point is necessary because it implies that terminal words for the algorithm are actually stable words, and hence, the formula can be computed by following the proof of Theorem \ref{thm:mainthm_subtree}.
	
	\subsection{Termination}
	
	Let $w$ be a cyclically reduced cyclic word.
	For an index $i$, let
	\begin{align*}
		\lambda(w,i)&:=\max\{k\ge 1\mid \forall\, 0\le j<k\quad w_{i-j}=w_i\}\text{, and}\\
		\rho(w,i)&:=\max\{k\ge 1\mid \forall\, 0\le j<k\quad w_{i+j}=w_i\}.
	\end{align*}
	
	Define the following quantities:
	\begin{align*}
		\mu_L(w)&:=\sum\{\rho(w,i+1)\mid w_iw_{i+1}=\mathtt{B a}\}+\sum\{\lambda(w,i)\mid w_iw_{i+1}=\mathtt{A b}\},\\
		\mu_R(w)&:=\sum\{\rho(w,i+1)\mid w_iw_{i+1}=\mathtt{aB}\}+\sum\{\lambda(w,i)\mid w_iw_{i+1}=\mathtt{bA}\}\text{, and}\\
		\mu(w)&:=\mu_L(w)+\mu_R(w).
	\end{align*}

	\begin{lemma}
		Let $w$ be a cyclically reduced cyclic word, and let $u$ be the result of applying a full commutator move to $w$.
		Then $\mu(u)<\mu(w)$.
	\end{lemma}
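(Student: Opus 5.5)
The plan is to compare the two cyclic words digram by digram. All counted digrams and runs lying entirely outside the replaced block, and away from its boundary, are unchanged. The block itself accounts for a strictly positive drop in $\mu$, and nothing at the two boundaries of the block can increase $\mu$.

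I would treat (F1), $\mathtt{aB}^k\mathtt{Ab}\to(\mathtt{zB})^{k-1}\mathtt{z}$, in detail and obtain (F2)--(F4) by the symmetric argument; alternatively I would note that the definitions of $\mu_L,\mu_R$ are arranged so that the symmetry exchanging the moves also permutes the counted digrams.

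\textbf{Inside the block.} In $w$, the block $\mathtt{aB}^k\mathtt{Ab}$ contains the digram $\mathtt{aB}$. The $\mathtt{B}$-run starting there is exactly $\mathtt{B}^k$, since it is followed by $\mathtt{A}$, so this digram contributes $\rho=k$ to $\mu_R$. The block also contains $\mathtt{Ab}$, whose $\mathtt{A}$-run to the left has length $\lambda=1$ because it is preceded by $\mathtt{B}$; this contributes $1$ to $\mu_L$. The other internal digrams, $\mathtt{BB}$ and $\mathtt{BA}$, are not counted. In $u$, the block is replaced by a word in $\mathtt{z},\mathtt{B}$ that begins and ends with $\mathtt{z}$. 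Since $\mathtt{z}$ is a separate symbol for the rewrite rules, none of $\mathtt{aB},\mathtt{bA},\mathtt{Ba},\mathtt{Ab}$ occurs inside it. So the internal contribution drops by $k+1\ge 2$.

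\textbf{At the boundaries.} Let $x$ be the letter before the initial $\mathtt{a}$ and $y$ the letter after the final $\mathtt{b}$.
\begin{itemize}
\item If $x=\mathtt{B}$, the digram $\mathtt{Ba}$ contributes $\rho=1$ in $w$ and disappears in $u$.
\item If $x=\mathtt{a}$, the $\mathtt{a}$-run containing our $\mathtt{a}$ loses one letter. A $\mathtt{Ba}$ digram at its start then has $\rho$ decreased by one. Any $\lambda$-contribution uses only $\mathtt{A}$- and $\mathtt{b}$-runs, so none is affected.
\item On the right, the case $y=\mathtt{A}$ removes a $\mathtt{bA}$ digram, and $y=\mathtt{b}$ shortens a $\mathtt{b}$-run ending in a $\mathtt{bA}$, decreasing $\lambda$.
\end{itemize}
Conversely, in $u$ the new neighbours of $x$ and $y$ are $\mathtt{z}$. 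This creates no counted digram and lengthens no run, since $\mathtt{z}$ is not one of the letters in the runs. Every other counted digram of $w$ has the same run length in $u$, because runs cannot extend through the block: it is bounded by the letters $\mathtt{a}$, $\mathtt{A}$, $\mathtt{b}$, which differ from their neighbours in the relevant directions. Hence $\mu(u)\le\mu(w)-(k+1)<\mu(w)$.

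\textbf{Main obstacle.} The main point of care is the bookkeeping at the boundaries. One must check that removing letters can only shorten or destroy runs, never merge two runs; this holds because the replacement inserts $\mathtt{z}$ between the former neighbours. The degenerate cyclic case, where the whole cyclic word is (a shift of) the block, must also be handled; there $\mu(u)=0<\mu(w)$ trivially. With these checked, (F2) follows identically: the block $\mathtt{Bab}^k\mathtt{A}$ contributes $1$ via $\mathtt{Ba}$ and $k$ via $\mathtt{bA}$. Cases (F3) and (F4) are handled the same way with the roles of $\mu_L$ and $\mu_R$ exchanged.
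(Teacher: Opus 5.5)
Your proof is correct and follows the same route as the paper: split the word into the replaced block and its complement, note that the block contributes at least $k$ to $\mu(w)$ (you show exactly $k+1$) while its commutator replacement contributes nothing, and check that the complement's contribution cannot increase. Your explicit boundary analysis makes rigorous what the paper compresses into the one-line claims $\mu(u)=\mu(v)$ and $\mu(w)\ge\mu(v)+k$. You also correctly attribute the $\mathtt{aB}^k$ contribution to $\mu_R$, where the paper writes $\mu_L$.
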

	\begin{proof}
		Let us only do move (F1), as the rest are symmetric.
		Write $w=\mathtt{aB}^k\mathtt{Ab}v$ for some freely reduced word $v$, so that $u=(\mathtt{zB})^{k-1}\mathtt{z}v$.
		Note that $\mu(u)=\mu(v)$ since $\mu_L(u)=\mu_L(v)$ and $\mu_R(u)=\mu_R(v)$.
		However, $\mu_L(w)\ge\mu_L(v)+k$ because of the $\mathtt{aB}^k$ subword, so $\mu_L(w)>\mu_L(u)$ and hence $\mu(w)>\mu(u)$.
	\end{proof}
	
	\begin{lemma}
		Let $w$ be a cyclically reduced cyclic word that does not admit any full commutator moves, and let $u$ be the result of applying a partial commutator move to $w$.
		Then $\mu(u)<\mu(w)$.
	\end{lemma}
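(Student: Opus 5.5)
We follow the proof of the preceding lemma for full commutator moves and treat only move (P1); the other three partial moves are obtained from it by the symmetries exchanging $\mathtt{a}\leftrightarrow\mathtt{b}$ and inverting letters. These symmetries swap $\mu_L$ and $\mu_R$ and permute the four digram patterns $\mathtt{Ba},\mathtt{Ab},\mathtt{aB},\mathtt{bA}$, so $\mu$ is preserved. Write the cyclic word as $w=\mathtt{aB}^k\mathtt{A}v$, so that $u=(\mathtt{zB})^k v$. The commutator letters $\mathtt{z},\mathtt{Z}$ never occur in any of the digrams counted by $\mu$. They therefore act as separators, and every contribution to $\mu(u)$ comes from digrams lying entirely inside $v$, together with the run-lengths $\rho,\lambda$ measured inside $v$.

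The key steps, in order, are as follows.

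First, we compare $\mu(u)$ with $\mu$ computed on $v$ alone. Since $\mathtt{z}$ now separates $v$ from the rest of $u$, we get $\mu(u)=\mu(v)$, where $\mu(v)$ means $\mu$ of the linear word $v$ with no wrap-around digram. The only care needed concerns the boundary runs: in $w$, the run of $\mathtt{A}$'s starting at $\mathtt{A}$ may continue into $v$, and the letter preceding $\mathtt{a}$ (the last letter of $v$) may extend a run. We must check that cutting $v$ off from $\mathtt{aB}^k\mathtt{A}$ can only shorten runs inside $v$, never lengthen them. This holds because $\rho$ and $\lambda$ count maximal runs of equal letters, and separating a word can only truncate such runs.

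Second, in $w$ the digram $\mathtt{aB}$ at the start contributes $\rho(w,i+1)\ge k\ge1$ to $\mu_R(w)$. This term is absent from $\mu(v)$, and all other terms of $\mu(w)$ dominate the corresponding terms of $\mu(v)$ by the monotonicity of the first step. Hence
\[
\mu(w)\ \ge\ \mu(v)+k\ >\ \mu(v)=\mu(u).
\]

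The main obstacle is justifying the monotonicity claimed in the first step. In particular, a counted digram of $v$ whose run reaches the boundary of $v$ must not be counted with a larger run in $u$ than in $w$. For example, if $v$ begins with $\mathtt{A}$'s, the $\mathtt{A}$-run in $w$ is longer than in $u$, which goes in our favour. We also need that no new counted digram is created at the junctions $\mathtt{B}v$ and $v\mathtt{z}$. Both junctions involve $\mathtt{z}$ or $\mathtt{B}$ adjacent to the ends of $v$. The pair $\mathtt{B}\cdot\mathrm{first}(v)$ could form $\mathtt{Ba}$, but in $u$ the letter before the first letter of $v$ is $\mathtt{B}$, exactly as $\mathtt{A}$ was in $w$, so this point needs checking. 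Here we use the hypothesis that no full commutator move applies: if $v$ began with $\mathtt{b}$ then (F1) would apply to $w$, and reducedness of $w$ excludes $v$ starting with $\mathtt{a}$. The remaining boundary configurations should then be excluded by the same kind of check, namely inspecting a finite table of first and last letters of $v$, as in the proof of Lemma \ref{lem:cttcanc}.
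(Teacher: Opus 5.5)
Your proposal is correct and is essentially the paper's argument: treat (P1), write $w=\mathtt{aB}^k\mathtt{A}v$ and $u=(\mathtt{zB})^k v$, use that $v$ cannot begin with $\mathtt{a}$ (nor, by the no-full-move hypothesis, with $\mathtt{b}$) to get $\mu(u)=\mu(v)$, and gain $k$ in $\mu(w)$ from the $\mathtt{aB}^k$ block. You are right that this block contributes to $\mu_R$ (the paper's ``$\mu_L$'' is a slip) and that $\mu(v)$ must be taken for the linear word $v$; the boundary check you leave open is immediate, since the only junction digram of $u$ that could be counted is $\mathtt{B}v_1$, and the $\mathtt{B}$ placed in front of $v$ can only lengthen a $\mathtt{B}$-run, which is never a run measured leftward by $\lambda$.
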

	\begin{proof}
		Let us only do move (P1), as the rest are symmetric.
		Write $w=\mathtt{aB}^k\mathtt{A} v$ for some freely reduced word $v$, so that $u=(\mathtt{zB})^k v$.
		Note that $v$ does not start with $\mathtt{a}$ or $\mathtt{b}$.
		Then one checks that $\mu_L(u)=\mu_L(v)$ and $\mu_R(u)=\mu_R(v)$.
		However, $\mu_L(w)\ge\mu_L(v)+k$ because of the $\mathtt{aB}^k$ subword, so $\mu(w)>\mu(u)$.
	\end{proof}
	
	A cyclically reduced cyclic word $w$ is a \emph{branch cyclic word} if it is not terminal and it admits no (full or partial) commutator moves.
	
	\begin{proposition}
		Let $w$ be a cyclically reduced branch cyclic word.
		Then the following hold:
		\begin{enumerate}
			\item $\mu_R(R^*(w))=\mu_R(w)-\#\{i\mid w_iw_{i+1}\in\{\mathtt{aB},\mathtt{bA}\}\}$;
			\item $\mu_L(R^*(w))\le\mu_L(w)$;
			\item Let $R^*(w)''$ be $R^*(w)$ after applying all possible (full and then partial) commutator moves, then $\mu(R^*(w)'')<\mu(w)$.
		\end{enumerate}
	\end{proposition}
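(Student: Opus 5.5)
We track, occurrence by occurrence, how the letter substitution defining $R^*$ transforms the blocks that feed into $\mu_L$ and $\mu_R$. The key input is the substitution table for $R^*$. The digrams $\mathtt{aB}\mapsto\mathtt{a}$ and $\mathtt{bA}\mapsto\mathtt{A}$ collapse one letter each. The monograms give $\mathtt{a}\mapsto\mathtt{ab}$ and $\mathtt{A}\mapsto\mathtt{BA}$, while $\mathtt{b},\mathtt{B},\mathtt{z},\mathtt{Z}$ are fixed. Since $w$ is a branch cyclic word, it is cyclically reduced, not terminal, and admits no $F$- or $P$-moves. We will use this structural information to control the neighbourhoods of the digrams $\mathtt{aB},\mathtt{bA},\mathtt{Ba},\mathtt{Ab}$.

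\textbf{Part 1.} Take a maximal block $\mathtt{aB}^k$ of $w$, contributing $\rho=k$ to $\mu_R(w)$. Under $R^*$, the pair $\mathtt{aB}$ becomes $\mathtt{a}$, so the image of $\mathtt{aB}^k$ is $\mathtt{aB}^{k-1}$. The letter following the block is not $\mathtt{b}$ (reducedness) and not $\mathtt{A}$: otherwise $\mathtt{aB}^k\mathtt{A}$ would admit (P1). So it is $\mathtt{a}$, or a commutator letter, whose image starts with $\mathtt{a}$ or is fixed. Hence the contribution becomes $k-1$ when $k\ge2$. When $k=1$, the digram $\mathtt{aB}$ disappears; we must check that no new $\mathtt{aB}$ or $\mathtt{bA}$ digrams are created elsewhere. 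Such a digram could only arise at a junction $x\,y$ where the image of $x$ ends in $\mathtt{a}$ or $\mathtt{b}$ and the image of $y$ starts with $\mathtt{B}$ or $\mathtt{A}$. For example, $\mathtt{a}\mapsto\mathtt{ab}$ followed by $\mathtt{A}\mapsto\mathtt{BA}$ would produce $\mathtt{bB}$, but $\mathtt{aA}$ cannot occur by reducedness.

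The same analysis applies to $\mathtt{bA}$ blocks, contributing $\lambda$ at $\mathtt{b}$: the block $\mathtt{b}^k\mathtt{A}$ maps to $\mathtt{b}^{k-1}\mathtt{A}$. Summing over all occurrences yields exactly the drop $\#\{i\mid w_iw_{i+1}\in\{\mathtt{aB},\mathtt{bA}\}\}$. The case check is finite and over letters, which gives item 1.

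\textbf{Part 2.} We analyze the $\mathtt{Ba}$ and $\mathtt{Ab}$ digrams of the image. $\mathtt{B}$ is fixed and $\mathtt{a}\mapsto\mathtt{ab}$, so a block $\mathtt{Ba}^k$ becomes $\mathtt{B}(\mathtt{ab})^k$, and its $\rho$-value drops from $k$ to $1$. Similarly, $\mathtt{A}\mapsto\mathtt{BA}$ turns $\mathtt{A}^k\mathtt{b}$ into $(\mathtt{BA})^k\mathtt{b}$, with $\lambda$ equal to $1$. We then need to show that new $\mathtt{Ba}$ or $\mathtt{Ab}$ digrams appear only in place of old ones, through an injective correspondence. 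A new $\mathtt{Ba}$ could come from $\mathtt{aB}\cdot\mathtt{a}$ collapsing to $\mathtt{a}$; it could also come from a junction $\mathtt{B}|\mathtt{a}$, which already existed. A new $\mathtt{Ab}$ could come from $\mathtt{bA}$ followed by $\mathtt{b}$, since $\mathtt{bAb}\mapsto\mathtt{Ab}$. However, $\mathtt{bAb}$ would be part of a (P2)/(F2)-type pattern, excluded by the branch hypothesis. The $\mathtt{aBa}$ case must be paired with the disappearing $\mathtt{aB}$ and charged against it. This is where the inequality, rather than an equality, comes from. I expect this local bookkeeping to be the main obstacle: one must make sure that every created $\mathtt{Ba}$ or $\mathtt{Ab}$ is dominated by a destroyed one. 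Otherwise one must borrow from the Part 1 drop, which would still suffice for item 3 but not for item 2 as stated.

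\textbf{Part 3.} Since $w$ is not terminal, it contains one of the forbidden digrams. As $w$ is reduced, the candidates are $\mathtt{aB},\mathtt{Ab},\mathtt{bA},\mathtt{Ba}$. If it contains $\mathtt{aB}$ or $\mathtt{bA}$, Part 1 gives a strict drop of $\mu_R$, and Part 2 gives that $\mu_L$ does not increase. If it contains only $\mathtt{Ba}$ or $\mathtt{Ab}$ with some block exponent $k\ge2$, Part 2 gives a strict drop. The remaining case has all such exponents equal to $1$. There I would show, by the symmetry $L\leftrightarrow R$ together with the no-move hypothesis, that the image acquires no new $\mathtt{aB}$ or $\mathtt{bA}$ and loses a $\mathtt{Ba}$ through a created commutator move. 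If this fails, I would instead argue that $\mu$ weakly decreases under $R^*$ and strictly decreases once the subsequent moves are applied. The final step is to invoke the two preceding lemmas: every full or partial commutator move applied to obtain $R^*(w)''$ strictly decreases $\mu$, and cancellation never increases it. Hence $\mu(R^*(w)'')\le\mu(R^*(w))\le\mu(w)$, with strictness coming either from Parts 1 and 2 or from at least one commutator move being available.
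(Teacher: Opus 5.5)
Your Part 1 is essentially the paper's argument: each maximal $\mathtt{aB}^k$ (resp. $\mathtt{b}^k\mathtt{A}$) block loses one letter. The only junctions that could create a new such digram are $\mathtt{aB}\cdot\mathtt{A}$ and $\mathtt{a}\cdot\mathtt{bA}$, and these are killed by (P1) and (P2); your $\mathtt{bB}$ example is beside the point.

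Part 2, however, is a genuine gap. You leave the bookkeeping open, and the forward analysis you do give is false. A $\mathtt{Ba}$ whose $\mathtt{a}$ is followed by $\mathtt{B}$ is absorbed into an $\mathtt{aB}$ token, so $\mathtt{a}$-runs merge and $\rho$ can grow: $R^*(\mathtt{BBaBa})=\mathtt{Baa}$. Also, $\mathtt{bAb}$ is not excluded by the branch hypothesis, and $R^*(\mathtt{AbAbb})=\mathtt{AAb}$ has $\lambda=2$. Both are branch words.

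Charging this growth to a disappearing $\mathtt{aB}$ moves it into $\mu_R$, which, as you note yourself, cannot give item 2. The missing idea is to read the table backwards from $u=R^*(w)$:
\begin{itemize}
\item An $\mathtt{a}$ of $u$ not followed by $\mathtt{b}$ can only come from a digram $\mathtt{aB}$.
\item A $\mathtt{B}$ of $u$ followed by $\mathtt{a}$ can only come from a monogram $\mathtt{B}$.
\end{itemize}
Hence a maximal $\mathtt{Ba}^k$ of $u$ comes from a subword $\mathtt{B}(\mathtt{aB})^{k-1}\mathtt{a}=(\mathtt{Ba})^k$ of $w$. That is, it comes from $k$ distinct $\mathtt{Ba}$ digrams of $w$, each contributing at least $1$ to $\mu_L(w)$. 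Symmetrically, a maximal $\mathtt{A}^k\mathtt{b}$ of $u$ comes from $(\mathtt{Ab})^k$ in $w$. Disjoint blocks of $u$ have disjoint preimages, and this is exactly the injective charging inside $\mu_L$ that you were after.

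Part 3 inherits this gap, since you need $\mu(R^*(w))\le\mu(w)$, and its last case is only a plan. The fact you are missing is simple. If $w$ has no $\mathtt{aB}$ or $\mathtt{bA}$, all letters of a $\mathtt{Ba}$ or $\mathtt{Ab}$ are monograms. So every $\mathtt{Ba}$ of $w$ becomes $\mathtt{Bab}$ and every $\mathtt{Ab}$ becomes $\mathtt{BAb}$ in $R^*(w)$, which admit (P3), resp. (P4), with $k=1$.

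The argument then runs as in the paper. If $R^*(w)$ admits a commutator move, the two preceding lemmas give $\mu(R^*(w)'')<\mu(R^*(w))\le\mu(w)$. Otherwise, $\mu(R^*(w))\ge\mu(w)$ together with items 1 and 2 forces $w$ to have no $\mathtt{aB}$ or $\mathtt{bA}$. The observation above then rules out $\mathtt{Ba}$ and $\mathtt{Ab}$ as well, so $w$ is terminal, a contradiction. This also makes your subcase on exponents $k\ge 2$ unnecessary.
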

	\begin{proof}
		Let $u=R^*(w)$.
		\begin{enumerate}
			\item From the table for $R^*$, a subword of $u=R^*(w)$ of the form $\mathtt{aB}^k$ (with $k$ maximal for the $\mathtt{B}$-run) can only arise in two ways: from a subword $(\mathtt{aB})\mathtt{B}^k$ of $w$ or from a subword $(\mathtt{aB})\mathtt{B}^{k-1}\mathtt{A}$ of $w$. The second option is excluded because it would imply a possible (F1) or (P1) commutator move. Hence, every maximal $\mathtt{aB}^k$ block comes from an $\mathtt{aB}^{k+2}$ block. Symmetrically, every maximal $\mathtt{b}^k\mathtt{a}$ block comes from a $\mathtt{b}^{k+1}\mathtt{a}$ block. 
			
			Moreover, note by the table of $R^*$ that $R^*$ creates no $\mathtt{aB}$ or $\mathtt{bA}$ diagram.
			
			Hence, $R^*$ decreases the size of all $\mathtt{aB}^k$ and $\mathtt{b}^k\mathtt{A}$ blocks by $1$ and creates no new ones, which implies the equality in the statement.
			
			\item From the table for $R^*$, we see that any subword of $u$ of the form $\mathtt{B a}^k$ must come from a subword of $w$ of the form $\mathtt{B}(\mathtt{aB})^{k-1}\mathtt{a}$, that is, $(\mathtt{B a})^k$.
			A subword of $w$ of the form $(\mathtt{B a})^k$ already contributes at least $k$ to $\mu_L(w)$.
			Similarly, a subword $\mathtt{A}^k \mathtt{b}$ of $u$ must come from a subword $(\mathtt{A b})^k$ of $w$, which again contributes at least $k$ to $\mu_L(w)$, proving the statement.
			\item Let $v$ be $u$ after applying all (full and then partial) commutator moves.
			If $u$ admits at least one commutator move, then $\mu(v)<\mu(u)\le\mu(w)$, so we are done; otherwise, $v=u$.
			Suppose for a contradiction that $\mu(u)\ge\mu(w)$.
			Then, by parts 1 and 2, we have that $\mu_R(u)=\mu_R(w)$ and $\mu_L(u)=\mu_L(w)$, implying that $w$ has no $\mathtt{aB}$ or $\mathtt{bA}$ subwords.
			However, note that $R^*(\mathtt{A b})=\mathtt{BAb} $ and $R^*(\mathtt{B a})=\mathtt{B ab}$, both of which give rise to commutator moves.
			But we know that $u$ admits no commutator moves, so $w$ has no $\mathtt{A b}$ or $\mathtt{B a}$ subwords.
			In other words, $w$ is terminal, contradicting the assumption. 
		\end{enumerate}
	\end{proof}
	
	A perfectly symmetric statement (with perfectly symmetric proof) holds for $L^*$.
	
	\begin{proposition}
		Let $w$ be a cyclically reduced branch cyclic word.
		Then the following hold:
		\begin{enumerate}
			\item $\mu_R(L^*(w))\le\mu_R(w)$;
			\item $\mu_L(L^*(w))=\mu_L(w)-\#\{i:w_iw_{i+1}\in\{\mathtt{A b},\mathtt{B a}\}\}$;
			\item Let $L^*(w)''$ be $L^*(w)$ after applying all possible (full and then partial) commutator moves, then $\mu(L^*(w)'')<\mu(w)$.
		\end{enumerate}
	\end{proposition}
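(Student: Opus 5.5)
The plan is to exploit the symmetry that swaps the roles of $L$ and $R$ and then transport the three parts of the preceding proposition for $R^*$ through it. The symmetry is the letter permutation $\sigma$ that exchanges $\mathtt{a}\leftrightarrow\mathtt{b}$ and $\mathtt{A}\leftrightarrow\mathtt{B}$, composed with word reversal $\rho$. Reversal is needed because $\sigma$ alone sends $\mathtt{z}=\mathtt{aBAb}$ to $\mathtt{bABa}$, which is not $\mathtt{z}^{\pm1}$. Let $\theta=\rho\circ\sigma$, the anti-automorphism of $F_2$ with $\mathtt{a}\mapsto\mathtt{b}$ and $\mathtt{b}\mapsto\mathtt{a}$. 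Then $\theta(\mathtt{z})=\mathtt{aBAb}=\mathtt{z}$ and $\theta(\mathtt{Z})=\mathtt{Z}$.

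Next I would check how $\theta$ interacts with the maps. Since $L(\mathtt{b})=\mathtt{ab}$ and $R(\mathtt{a})=\mathtt{ab}$, a direct computation on generators gives $\theta\circ R=L\circ\theta$; for instance $\theta(R(\mathtt{a}))=\theta(\mathtt{ab})=\mathtt{ab}=L(\mathtt{b})=L(\theta(\mathtt{a}))$. As $\theta$ preserves cyclic reducedness, it follows that $L^*=\theta\circ R^*\circ\theta$.

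I would then check the invariants. The digrams counted in $\mu_R$, namely $\mathtt{aB}$ and $\mathtt{bA}$, are sent by $\theta$ to $\mathtt{Ab}$ and $\mathtt{Ba}$, which are exactly the digrams of $\mu_L$. Reversal exchanges $\lambda$ and $\rho$: the run $\mathtt{aB}^k$ becomes $\mathtt{A}^k\mathtt{b}$, so the summand $\rho(w,i+1)$ attached to $\mathtt{aB}$ turns into $\lambda$ attached to $\mathtt{Ab}$, and symmetrically for the other pair. Hence $\mu_L(\theta w)=\mu_R(w)$ and $\mu_R(\theta w)=\mu_L(w)$.

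Finally, $\theta$ must preserve the word classes involved. The forbidden-digram set defining terminal words is closed under $\theta$, and so are the families of full and partial commutator moves. For example, (F1) $\mathtt{aB}^k\mathtt{Ab}\to(\mathtt{zB})^{k-1}\mathtt{z}$ is sent to $\mathtt{aB}\mathtt{A}^k\mathtt{b}\to\mathtt{z}(\mathtt{Az})^{k-1}$, which is (F4). Likewise (P1) $\mathtt{aB}^k\mathtt{A}\to(\mathtt{zB})^k$ is sent to $\mathtt{BA}^k\mathtt{b}\to(\mathtt{Az})^k$, which is (P4). Thus $\theta$ maps branch cyclic words to branch cyclic words, and ``apply all commutator moves'' commutes with $\theta$, at least up to the choice of move order.

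With this in hand, let $w$ be a branch word and put $w'=\theta w$, which is also a branch word. Then $\mu_R(L^*(w))=\mu_L(R^*(w'))\le\mu_L(w')=\mu_R(w)$, which is part 1. Part 2 follows the same way from part 1 of the preceding proposition, because the digram count $\#\{\mathtt{aB},\mathtt{bA}\}$ in $w'$ equals $\#\{\mathtt{Ab},\mathtt{Ba}\}$ in $w$. Part 3 follows from part 3 of the preceding proposition together with $\mu(\theta u)=\mu(u)$.

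I expect the main obstacle to be verifying that $\theta$ exactly permutes the rewrite rules and the terminal condition, as a symbolic check on $\mathtt{z},\mathtt{Z}$ in all twelve rules. A second concern is whether the exhaustive move application is order-independent; if it is not, I would argue directly that part 3 only needs some maximal sequence of moves, since each move strictly decreases $\mu$ by the two earlier lemmas. If the symmetry check fails for some rule, the fallback is to redo the three-part argument verbatim, with the $L^*$ substitution table $\mathtt{bA}\to\mathtt{A}$, $\mathtt{aB}\to\mathtt{B}$, $\mathtt{b}\to\mathtt{ab}$, $\mathtt{B}\to\mathtt{BA}$.
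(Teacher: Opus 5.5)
Your proposal is correct and is essentially the paper's argument made explicit: the paper only asserts that the $L^*$ case has a ``perfectly symmetric proof,'' and your anti-automorphism $\theta$ is exactly that symmetry. Recall that $\theta$ reverses words and swaps $\mathtt{a}\leftrightarrow\mathtt{b}$; it fixes $\mathtt{z},\mathtt{Z}$, satisfies $\theta R\theta=L$, exchanges $\mu_L$ and $\mu_R$, and permutes both the full and partial commutator moves and the terminal digrams, and all of these checks go through. The only slip is in your unneeded fallback: the cancelling digrams in the $L^*$ table are $\mathtt{Ab}\to\mathtt{b}$ and $\mathtt{Ba}\to\mathtt{B}$, not $\mathtt{bA}\to\mathtt{A}$ and $\mathtt{aB}\to\mathtt{B}$, as conjugating the $R^*$ table by $\theta$ shows and as the digram count in part 2 requires.
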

	
	Hence, Point \ref{prop:algorithm} of Proposition \ref{prop:alg} follows from the strict decrease of the complexity $\mu$. Moreover,
	
	\begin{corollary}
		The maximum depth of the recursion of the algorithm is at most $\mu(w)\le\ell(\llbracket w \rrbracket)$.
	\end{corollary}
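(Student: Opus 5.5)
The bound $\mu(w)\le\ell(\llbracket w\rrbracket)$ and the bound on the depth are two separate claims, and I would prove them in that order.

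\textbf{Bounding $\mu$ by the length.} Fix a cyclically reduced representative $w=w_1\cdots w_n$ with $n=\ell(\llbracket w\rrbracket)$. I would charge each summand of $\mu(w)$ to letters of $w$ without overlaps. A summand $\rho(w,i+1)$ for a digram $w_iw_{i+1}\in\{\mathtt{Ba},\mathtt{aB}\}$ counts the maximal run of equal letters starting at position $i+1$. A summand $\lambda(w,i)$ for $w_iw_{i+1}\in\{\mathtt{Ab},\mathtt{bA}\}$ counts the maximal run ending at position $i$. Every maximal run of a single letter is a cyclic interval of positions. I then check which charges can land on the same run:
\begin{itemize}
\item A run of $\mathtt{a}$'s can only be charged through its left end, by the digram $\mathtt{Ba}$ (via $\mu_L$). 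The digram $\mathtt{aB}$ charges the run of $\mathtt{B}$'s to its right, not the run of $\mathtt{a}$'s.
\item Similarly, a run of $\mathtt{B}$'s is charged only from the left, through $\mathtt{aB}$.
\item A run of $\mathtt{A}$'s is charged only from the right, through $\mathtt{Ab}$.
\item A run of $\mathtt{b}$'s is charged only from the right, through $\mathtt{bA}$.
\end{itemize}
A maximal run has exactly one left neighbour and one right neighbour, so it receives at most one charge. Hence the charged runs are pairwise disjoint sets of positions, and $\mu(w)\le n$.

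The word $w$ handled inside the recursion also contains the symbols $\mathtt{z},\mathtt{Z}$. There I would note that these symbols belong to no run of $\{\mathtt{a},\mathtt{b},\mathtt{A},\mathtt{B}\}$ and only break runs, so the same disjointness argument goes through. The bound is in any case needed only for the input word, where it is the statement of the corollary.

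\textbf{Bounding the depth.} I would follow a single branch of the recursion. One level consists of applying $L^*$ or $R^*$ and then all full and partial commutator moves. By the third point of each of the two preceding propositions, this strictly decreases $\mu$ whenever the current word is a branch cyclic word.

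The first commutator moves, applied to the initial word, also do not increase $\mu$, by the two preceding lemmas. So along any branch, $\mu$ is a non-negative integer that starts at most at $\mu(w)$ and drops by at least one at each recursive call. A branch stops once the word is terminal. Therefore the number of recursive calls on any branch is at most $\mu(w)$. The restart on $S(w)$ in Step 4 has the same $\mu$, because $S$ only permutes letters and $\mu$ is symmetric under that permutation, so the same bound applies there.

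\textbf{Main obstacle.} The delicate part is the injectivity of the charging in the first step: I must make sure that a $\rho$-charge and a $\lambda$-charge never land on the same run. This reduces to the observation that the four charged run types, namely $\mathtt{a}$ and $\mathtt{B}$ charged from the left and $\mathtt{A}$ and $\mathtt{b}$ charged from the right, each have a unique charging neighbour on a unique side. I would verify this case by case on the four digrams in each of $\mu_L$ and $\mu_R$.
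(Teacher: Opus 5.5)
The part of your proposal dealing with the recursion started at $w$ is the paper's argument. The initial full and partial commutator moves do not increase $\mu$, each further level ($L^*$ or $R^*$ followed by all commutator moves) strictly decreases $\mu$ on branch words, and $\mu\ge 0$. Your run-charging proof of $\mu(w)\le\ell(\llbracket w\rrbracket)$ is correct: $\mathtt{a}$- and $\mathtt{B}$-runs are charged only from the left, $\mathtt{A}$- and $\mathtt{b}$-runs only from the right, so each maximal run is charged at most once. The paper states this inequality without proof, so your argument fills in a step it leaves out.

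The step that fails is your treatment of the restart on $S(w)$ in Step 4. The element $S$ acts on letters by $\mathtt{a}\mapsto\mathtt{b}\mapsto\mathtt{A}\mapsto\mathtt{B}\mapsto\mathtt{a}$ (compare $\gamma$ and $\tilde\gamma=S(\gamma)$ in Example~\ref{ex:counting}), so it does not preserve signs. It sends the mixed digrams $\mathtt{aB},\mathtt{Ba},\mathtt{Ab},\mathtt{bA}$ counted by $\mu$ to the same-sign digrams $\mathtt{ba},\mathtt{ab},\mathtt{BA},\mathtt{AB}$, which $\mu$ ignores, and conversely. Hence $\mu$ is not $S$-invariant. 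For instance $\mu(\mathtt{ab})=0$ but $\mu(\mathtt{bA})=2$, and for the word of Example~\ref{ex:counting} one gets $\mu=7$ before applying $S$ and $\mu=3$ after.

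The bound itself also fails for that run. For $w=\mathtt{ab}$, the word $S(w)=\mathtt{bA}$ admits no commutator moves and is not terminal. Its children $R^*(\mathtt{bA})=\mathtt{A}$ and $L^*(\mathtt{bA})=\mathtt{b}$ are terminal, so the run on $S(w)$ has depth $1>\mu(w)=0$.

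The repair is to treat $S(w)$ as a fresh input. Your depth argument bounds that run by $\mu(S(w))$. Since $S$ is a letter permutation compatible with inversion, $S(w)$ is cyclically reduced of the same length, so your charging argument gives $\mu(S(w))\le\ell(\llbracket S(w)\rrbracket)=\ell(\llbracket w\rrbracket)$. Thus the bound by $\ell(\llbracket w\rrbracket)$ holds for the whole algorithm. The intermediate bound $\mu(w)$, however, must be read as a statement about the recursion rooted at $w$ only.
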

	
	\subsection{No commutators cancel}
	
	Point \ref{nopanic} of Proposition \ref{prop:alg} is a direct corollary of the following proposition. Call a cyclic word \emph{allowed} if it does not have any of the following subwords: $\mathtt{zZ},\mathtt{Z z},\mathtt{zB a},\mathtt{Z aB}, \mathtt{bA z},\mathtt{A bZ}$. Call it \emph{forbidden} otherwise.
	
	\begin{proposition}
		Let $w$ be a cyclically reduced cyclic word in $\{\mathtt{a},\mathtt{b},\mathtt{z},\mathtt{A},\mathtt{B},\mathtt{Z}\}$.
		If $w$ is allowed, then the following claims hold.
		\begin{enumerate}
			\item If $w$ admits a full commutator move $m$, then $m(w)$ is allowed;
			\item If $w$ admits no full commutator moves but admits a partial commutator move $m$, then $m(w)$ is allowed;
			\item If $w$ is a branch word, then $L^*(w)$ and $R^*(w)$ are allowed.
		\end{enumerate}
	\end{proposition}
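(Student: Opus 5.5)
The plan is a local case analysis. Each move, and each of $L^*$, $R^*$, changes the word only in a bounded window. So a forbidden pattern in the output must either be created inside the replaced block, or straddle its boundary with the untouched context. The six forbidden patterns $\mathtt{zZ},\mathtt{Zz},\mathtt{zBa},\mathtt{ZaB},\mathtt{bAz},\mathtt{AbZ}$ have length at most $3$. Hence we only need to inspect at most two letters on either side of each replaced block, and we use the hypotheses (allowed, no full moves, cyclically reduced, branch) to exclude the bad neighbours. By the $\langle S\rangle$-symmetry and inversion symmetry already used in the paper (the moves come in four symmetric families), it suffices to treat (F1), (P1), and one half of the $R^*$ table; $L^*$ is symmetric.

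\textbf{Claim 1.} Take (F1): $x\,\mathtt{aB}^k\mathtt{Ab}\,y \to x\,(\mathtt{zB})^{k-1}\mathtt{z}\,y$. The block itself contains no forbidden pattern. At the right boundary the only danger is $\mathtt{z}y$ with $y$ starting $\mathtt{Z}$ or $\mathtt{Ba}$. Neither can occur, since $w$ is cyclically reduced ($\mathtt{bB}$ would cancel) and $\mathtt{bZ}$ followed by... — more precisely, $y$ beginning with $\mathtt{Z}$ would make $\mathtt{AbZ}$ a forbidden pattern in $w$. At the left boundary the danger is $x\mathtt{z}$ or $x\mathtt{zB}\ldots$ with $x$ ending in $\mathtt{Z}$, or in $\mathtt{bA}$ when $k=1$. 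The first would give $\mathtt{ZaB}$ in $w$, and the second would give $\mathtt{bAz}$, or else an $\mathtt{Aa}$ cancellation. Since a full move preserves the letters in $y$, applying several moves sequentially is handled by induction.

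\textbf{Claim 2.} (P1): $x\,\mathtt{aB}^k\mathtt{A}\,y\to x(\mathtt{zB})^k y$. Now $y$ cannot begin with $\mathtt{a}$ (reducedness), cannot begin with $\mathtt{b}$ (otherwise (F1) applies), and cannot begin with $\mathtt{Z}$ or $\mathtt{z}$ (by the $\mathtt{AbZ}$-type exclusions and allowedness, checked case by case). So $y$ starts with $\mathtt{A}$, $\mathtt{B}$, or a letter making $\mathtt{B}y$ harmless. The only new right-hand boundary triple to check is $\mathtt{B}\,y_1y_2$, and $\mathtt{zB}$ followed by $\mathtt{a}$ is exactly excluded because $y_1\ne \mathtt{a}$. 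The left boundary is handled as in Claim 1.

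\textbf{Claim 3.} This is the main obstacle. Here $R^*$ rewrites the whole word via the digram/monogram table, so we argue on preimages. For each forbidden pattern in $R^*(w)$ we find its possible preimages in $w$ under the table. $\mathtt{z},\mathtt{Z}$ are fixed, $\mathtt{B}$ comes from $\mathtt{B}$ or from the tail of $\mathtt{A}\mapsto\mathtt{BA}$, and $\mathtt{a}$ comes from $\mathtt{a}$ or $\mathtt{aB}$. For example, $\mathtt{zBa}$ in $R^*(w)$ forces $\mathtt{zB}$ followed by $\mathtt{a}$, or $\mathtt{zBa}$, or $\mathtt{zBaB}$ in $w$, all of which are forbidden in $w$. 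The word $\mathtt{zA}$ cannot produce $\mathtt{zBA}\to\ldots$ a forbidden pattern either, since $\mathtt{BA}$ is followed by an image beginning with $\mathtt{a}$, $\mathtt{b}$, $\mathtt{B}$, $\mathtt{A}$, $\mathtt{z}$, or $\mathtt{Z}$. The case that uses the branch hypothesis is when an $\mathtt{aB}$ or $\mathtt{bA}$ digram collapses and produces an adjacency such as $\mathtt{Z}\,\mathtt{aB}$ or $\mathtt{bA}\,\mathtt{z}$. In that case the preimage contains a subword of the form $\mathtt{aB}^j\mathtt{A}$ or $\mathtt{Ba}^j\mathtt{b}$, which would admit a commutator move, contradicting that $w$ is a branch word.

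I expect this exhaustive preimage table to be the bulk of the work: a finite list of roughly six patterns times a few preimages each.
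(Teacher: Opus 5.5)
Your overall plan is the paper's: local boundary checks for (F1) and (P1), a preimage analysis through the $R^*$ table, and symmetry for the other moves and for $L^*$. Claims 1 and 2 essentially reproduce the paper's checks. One assertion in Claim 2 is false but unused: $y$ \emph{can} begin with $\mathtt{z}$ or $\mathtt{Z}$, since nothing forbids $\mathtt{Az}$ or $\mathtt{AZ}$. For example, the cyclic word $\mathtt{aBAZB}$ is reduced, allowed and admits no full move, and (P1) applies to it with $y_1=\mathtt{Z}$. This is harmless, because $\mathtt{zB}$ followed by $\mathtt{z}$ or $\mathtt{Z}$ creates no forbidden triple; only $y_1\neq\mathtt{a}$ is needed.

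The real problem is in Claim 3, the step you single out as the crux. Your mechanism for the collapse cases is that the preimage contains $\mathtt{aB}^j\mathtt{A}$ or $\mathtt{Ba}^j\mathtt{b}$, contradicting that $w$ is a branch word. This fails:
\begin{itemize}
\item a $\mathtt{ZaB}$ in $R^*(w)$ can come from $\mathtt{ZaBBz}$;
\item a $\mathtt{bAz}$ can come from $\mathtt{zbbAz}$.
\end{itemize}
Neither contains a commutator-move pattern, so branchness yields no contradiction. What you are missing is that the table never creates a forbidden triple that was not already present in $w$:
\begin{itemize}
\item $\mathtt{z}$ and $\mathtt{Z}$ are fixed and no other image contains them, so $\mathtt{zZ}$ and $\mathtt{Zz}$ in $R^*(w)$ come from $\mathtt{zZ}$ and $\mathtt{Zz}$ in $w$;
\item $\mathtt{zBa}$ can only come from $\mathtt{zBa}$ or $\mathtt{zBaB}$;
\item $\mathtt{ZaB}$ can only come from $\mathtt{ZaBB}$ or $\mathtt{ZaBA}$;
\item $\mathtt{bAz}$ can only come from $\mathtt{abAz}$ or $\mathtt{bbAz}$;
\item $\mathtt{AbZ}$ can only come from $\mathtt{AbZ}$ or $\mathtt{bAbZ}$.
\end{itemize}
Each of these preimages contains the same forbidden triple. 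So allowedness of $w$ alone proves part 3; this is exactly the paper's case check, and the branch hypothesis is not used there. You also never treat $\mathtt{AbZ}$, and in $\mathtt{A}\mapsto\mathtt{BA}$ the $\mathtt{B}$ is the head of the image, not the tail.
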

	\begin{proof}\leavevmode We will prove it point by point.
		\begin{enumerate}
			\item Let us do only (F1), since the other full commutator moves are symmetric.
			Let $u=m(w)$.
			Write $w=\mathtt{aB}^k\mathtt{A b} v$ for some reduced word $v$, so that $u=(\mathtt{zB})^{k-1}\mathtt{z}v$.
			Since $w$ is allowed, $v$ does not start or end with $\mathtt{Z}$, so $u$ has no $\mathtt{zZ}$ or $\mathtt{Z z}$ subwords.
			Since $w$ is cyclically reduced, it does not start with $\mathtt{B}$ or end with $\mathtt{A}$.
			In particular, $u$ does not have $\mathtt{zB a}$ or $\mathtt{bA z}$ subwords (since $v$ also does not).
			We deduce that $u$ is allowed.
			\item Let us do only (P1), since the other partial commutator moves are symmetric.
			Let $u=m(w)$.
			Write $w=\mathtt{aB}^k\mathtt{A} v$ for some reduced word $v$, so that $u=(\mathtt{zB})^k v$.
			Because $w$ admits no commutator moves and is reduced, we know that $v$ does not start with $\mathtt{a}$ or $\mathtt{b}$, and it does not end with $\mathtt{Z}$ or $\mathtt{A}$. We check then that none of the forbidden subwords are in $u$.
			
			For $\mathtt{zZ}$, and $\mathtt{Zz}$: the block $(\mathtt{zB})^k$ contains no $\mathtt{Z}$ and $v$ does not begin with $\mathtt{Z}$. For $\mathtt{zBA}$: within $(\mathtt{zB})^k$ each $\mathtt{z}$ is followed by $\mathtt{B}$ and then $\mathtt{z}$, never by $\mathtt{a}$, and at the junction it is excluded because $v$ cannot start with $\mathtt{a}$. For $\mathtt{bAz}$: $u$ contains a $\mathtt{z}$ either inside $(\mathtt{zB})^k$ or inside $v$, since the letter before each $(\mathtt{z}B)$-block is either $B$ or it is the boundary, in which case it cannot be $\mathtt{bA}$ because $w$ is cyclically reduced. For $\mathtt{ZaB}$ and $\mathtt{AbZ}$: these involve $\mathtt{Z}$, which happens only inside $v$, and then it is controlled by $w$ being allowed.

			\item Let us do only $R^*$, since $L^*$ is symmetric.
			Let $u=R^*(w)$.
			This is again a case check using the table.
			It is clear that $u$ has no $\mathtt{zZ}$ or $\mathtt{Z z}$ subwords.
			A $\mathtt{zB a}$ subword of $u$ can only come from a $\mathtt{zB a}$ of $w$ (forbidden).
			A $\mathtt{Z aB}$ subword can only come from $\mathtt{Z aB}$ (forbidden).
			A $\mathtt{bA z}$ subword can only come from $\mathtt{bA z}$ (forbidden).
			An $\mathtt{A bZ}$ subword can only come from $\mathtt{A bZ}$ (forbidden).
			So $u$ is allowed. \qedhere
		\end{enumerate}
	\end{proof}
	
	Hence, if $w$ is a cyclically reduced cyclic word in $\{\mathtt{a},\mathtt{b},\mathtt{A},\mathtt{B}\}$.
	Then every terminal word derived from $w$ through the formula algorithm has no $\mathtt{Z z}$ or $\mathtt{zZ}$ subwords.

	\bibliographystyle{alpha}
	\bibliography{references}
	
	\printaddresses

\end{document}